\newcommand{\lleft}{\left}
\newcommand{\rright}{\right}
\newcommand{\dbinom}[2]{\pmatrix{#1 \cr #2}}
\newtheorem{theorem}{Theorem}[section]
\newtheorem{lemma}[theorem]{Lemma}
\newtheorem{proposition}[theorem]{Proposition}
\newtheorem{corollary}[theorem]{Corollary}
\def\uro{{U}^{\!\!\!\!\raise5pt\hbox{$\scriptstyle o$}}}
\begin{document}
\begin{frontmatter}

\title{Critical population and error threshold on the sharp peak
landscape for the Wright--Fisher model}
\runtitle{Critical population and error threshold}

\begin{aug}
\author[A]{\fnms{Rapha\"el}~\snm{Cerf}\corref{}\ead[label=e1]{rcerf@math.u-psud.fr}\ead[label=u1,url]{http://www.foo.com}}
\runauthor{R. Cerf}
\affiliation{Universit\'e Paris Sud and IUF}
\address[A]{Math\'ematique\\
Universit\'e Paris Sud\\
B\^atiment~425\\
91405 Orsay Cedex\\
France\\
\printead{e1}} 
\end{aug}

\received{\smonth{7} \syear{2012}}
\revised{\smonth{5} \syear{2014}}

%
\begin{abstract}
We pursue the task of developing a finite population counterpart
to Eigen's model.
We consider the classical Wright--Fisher model describing the evolution
of a population of size~$m$ of chromosomes of length~$\ell$
over an alphabet of cardinality $\kappa$.
The mutation probability per locus is~$q$.
The replication rate is
$\sigma>1$ for the master sequence and $1$ for the other sequences.
We study the equilibrium distribution of the process in the regime where
\begin{eqnarray*}
\ell&\to&+\infty,\qquad m\to+\infty,\qquad q\to0,
\\
{\ell q} &\to& a\in\,]0,+\infty[, \qquad\frac{m}{\ell}\to\alpha\in
[0,+\infty].
\end{eqnarray*}
We obtain an equation
$\alpha\psi(a)=\ln\kappa$
in the parameter space $(a,\alpha)$
separating the regime where the equilibrium population is totally
random from the
regime where a quasispecies is formed. We observe the existence of a critical
population size necessary for a quasispecies to emerge, and
we recover the finite population counterpart of the error threshold.
The result is the~twin brother of the corresponding result for
the Moran model. The proof is more complex, and it relies on the
Freidlin--Wentzell theory of random perturbations of dynamical
systems.
\end{abstract}

%
\begin{keyword}[class=AMS]
\kwd[Primary ]{60F10}
\kwd[; secondary ]{92D25}
\end{keyword}
\begin{keyword}
\kwd{Critical population}
\kwd{error threshold}
\kwd{Wright--Fisher}
\kwd{sharp peak}
\end{keyword}
\end{frontmatter}

\section{Introduction}
In 1971,
Eigen studied a population of macro\-mo\-le\-cu\-les,
evolving under replication
and mutation \cite{EI1}.
He considered the situation where one specific sequence, called the
master sequence,
replicates faster than the others.
A fundamental discovery of Eigen is the existence of an error threshold.
If the mutation rate exceeds a critical value, called the error threshold,
then, at equilibrium, the population is completely random.
If the mutation rate is below the error threshold,
then, at equilibrium, the population contains a positive fraction of
the master
sequence and a cloud of mutants which are quite close to the master sequence.
This specific distribution of individuals is called a quasispecies.
Since then,
the notions of error threshold and quasispecies have been widely used
to understand
the evolution of populations in biology. However,
biological populations are finite, and Eigen's model cannot be directly applied
in this context because it is
formulated
for an infinite population
of macromolecules.
A crucial task is therefore to reformulate and to understand
the notions of error threshold and quasispecies in biological
models describing the
evolution of a finite population.

The Wright--Fisher model is one of the most studied models in
mathematical population genetics.
In this work, we apply
to a
basic Wright--Fisher model
the ideas presented in~\cite{CE} for
the Moran model, thereby pursuing
the task of developing a finite population counterpart
to Eigen's model.
Numerous works have attacked this issue \cite{AF,Deme,GI,Cas1,NS,WE}.
Using different techniques,
Saakian, Deem and Hu \cite{SAA1},
Park, Mu\~noz and Deem \cite{PEM},
Musso \cite{MUS} and
Dixit, Srivastava and Vishnoi \cite{DSV}
considered finite population models which approximate Eigen's
model when the population size goes to infinity.
These models are
variants or generalizations of the classical Wright--Fisher model
of population genetics.
The problem is to understand how the error threshold phenomenon
present in Eigen's model in the infinite population limit
shows up in the finite population model.
We refer to the introduction of \cite{CE} for a detailed
discussion of this question and
the heuristics guiding
our strategy.
We consider here
the classical Wright--Fisher model describing the evolution
of a population of size~$m$ of chromosomes of length~$\ell$
over an alphabet of cardinality $\kappa$.
The mutation probability per locus is~$q$.
The replication rate is
$\sigma>1$ for the master sequence and $1$ for the other sequences.
We study the equilibrium distribution of the process in the regime where
\begin{eqnarray*}
\ell &\to&+\infty,\qquad m\to+\infty,\qquad q\to0,
\\
{\ell q} &\to& a\in\,]0,+\infty[, \qquad\frac{m}{\ell}\to\alpha\in
[0,+\infty].
\end{eqnarray*}
We obtain an equation
$\alpha\psi(a)=\ln\kappa$
in the parameter space $(a,\alpha)$
separating the regime where the equilibrium population is totally
random from the
regime where a quasispecies is formed. We observe the existence of a critical
population size necessary for a quasispecies to emerge, and
we recover the finite population counterpart of the error threshold.
It is a classical fact that the Moran model and the
Wright--Fisher model have similar dynamics. Indeed, the main
result here is the~twin brother of the main result of~\cite{CE},
the only difference being the equation of~the critical curve.
While we could compute exactly the critical curve for the
Moran model, here the critical curve is defined through a variational
problem depending on the parameter~$a$.
Apart from this point, the scaling and the associated exponents
are the same in both cases.
This confirms a conjecture of~\cite{CE}, and it sustains the hope
that this kind of analysis is robust.

A potential application of the result concerns genetic algorithms.
Indeed, the Wright--Fisher model is identical to the genetic
algorithm without crossover.
In her Ph.D. thesis \cite{OCH},
Ochoa investigated the role of the error
threshold phenomenon for genetic algorithms, and
she concluded that there exists
a relationship between the optimal mutation rate and
the error threshold.
%
The result proved here
provides a theoretical basis for some heuristics to control
efficiently the genetic algorithms proposed in \cite{CGA}.

On the technical side, the
Wright--Fisher model is much more difficult to handle than
the Moran model.
In the Moran model, the estimates of the selection drift relied on
a birth and death model introduced by
Nowak and Schuster \cite{NS}.
In the
Wright--Fisher model, the bounding processes are more complicated;
they involve three dependent binomial laws.
As the size of the population grows, their transition probabilities
satisfy a large deviation principle, derived with the help
of the classical Cram\'er theorem.
In the set of the populations containing the master sequence, the
process can be seen as the random perturbation of a discrete dynamical
system.
This discrete dynamical system is simply
the sequence of the iterates of a rational map
$F\dvtx [0,1]\to[0,1]$. Depending on the parameters,
this map has either one stable fixed point or two fixed points,
one stable and the other unstable.
This opens the way to the application of the
general scheme developed by Freidlin and Wentzell \cite{FW} to
study the random perturbations of dynamical systems.
Originally,
Freidlin and Wentzell studied diffusion processes arising as
Brownian perturbations of a differential equation. These
processes are continuous time Markov processes with a continuous
state space.
However, their approach is robust, and it can be applied in other
contexts. Kifer \cite{KI,KID} reworked this theory in the
discrete time case. Unfortunately, our bounding processes do not
fit the hypothesis of Kifer's model, for the following
two reasons.
In Kifer's model,
the large deviation rate function of the transition
probabilities is not allowed to be infinite, and
the large deviation principle for
the transition probabilities
is assumed to be uniform with respect to the starting
point.
Certainly the general framework considered by Kifer could be
adjusted to include our case, with the help of
some relaxed hypothesis.
Yet in our case, we have only two attractors, one unstable
and one stable, and we need only two specific estimates
from the general theory, which is concerned
with a finite number of attractors of any type.
In fact, the kind of estimates we need have
been computed
in two
other works handling closely related models.
In an unpublished work \cite{TD} (transmitted to me
by courtesy of Gregory Morrow), Darden analyzed a
Wright--Fisher model with two alleles and no mutation
with the help of
the Freidlin--Wentzell theory.
What we have to do essentially is to obtain results analogous
to Darden for the model with mutations.
Morrow and Sawyer \cite{MS} considered a more general model of
Markov chains evolving in a convex subset of ${\mathbb R}^d$ around
one stable attractor. Our bounding processes
would fit this framework, were it for the uniform assumption
on the variance of the
transition probabilities. In our
case, this condition is violated close to the unstable
attractor~$0$. We can apply their results outside a neighborhood
of $0$, but this would lead to a messy construction.
It appears that, in any case, if we try to apply the results of Kifer
or of
Morrow and Sawyer, we have to make a specific study of our process
in the vicinity of the unstable fixed point~$0$.
In the end,
it seems
that the most efficient presentation
consists
in deriving from scratch
the required estimates, following the initial ideas of Freidlin and
Wentzell.
The techniques involved in the proof are classical and go
back to the seminal work of Freidlin and Wentzell.
However, there is an important simplifying feature in our case.
Indeed, the bounding processes are monotone.
This allows us to avoid uniform large deviation estimates
and to provide substantially simpler proofs.

We describe the model in the next section, and we present
the main result in Section~\ref{mainres}. The rest of the paper
is devoted to the proofs. The global strategy is identical to the
case of the Moran model.
The lumping is performed in Section~\ref{seclum}.
In Section~\ref{secmono}, we build a coupling and
we prove the monotonicity of the occupancy process.
This allows us to define simple bounding processes in
Section~\ref{secbounds}.
Section~\ref{bide}, which analyzes
the dynamics of the bounding processes, is much more complicated
than for the Moran model.
Section~\ref{disc} presents the estimates in the neutral region.
These estimates were derived in \cite{CE} for the Moran model, and
they can
be easily adapted to the Wright--Fisher model, so most of the proofs
are omitted.
\section{The Wright--Fisher model}\label{secmodel}
Let $\mathcal A$ be a finite alphabet, and let
\index{$\mathcal A$}
\index{$\kappa$}
$\kappa=\operatorname{card}\mathcal A$ be its cardinality.
Let $\ell\geq1
$\index{$\ell$} be an integer. We consider the space
${\mathcal A}^\ell$ of sequences of length $\ell$ over the
alphabet $\mathcal A$.
Elements of this space represent the chromosome of an haploid
individual, or equivalently its genotype.
In our model, all the genes have the same set of alleles, and each
letter of the alphabet $\mathcal A$ is a possible allele.
Typical examples are
${\mathcal A}=\{ A,T,G,C \}$ to model standard DNA, or
${\mathcal A}=\{ 0,1 \}$ to deal with binary sequences.
Generic elements of
${\mathcal A}^\ell$ will be denoted by the letters $u,v,w$.\index{$u,v,w$}
A population is an $m$-tuple of elements of
${\mathcal A}^\ell$.
Generic populations will be denoted by the letters
$x,y,z\index{$x,y,z$}$.
Thus a population $x$ is a vector
\[
x = \pmatrix{ x(1)
\cr
\vdots
\cr
x(m)} %
\]
whose components are chromosomes.
For $i\in\{ 1,\ldots,m \}$, we denote by
\[
x(i,1),\ldots,x(i,\ell)
\]
the letters of the sequence $x(i)$. This way a population $x$
can be represented as an array
\[
x = \pmatrix{ x(1,1)&\cdots&x(1,\ell)
\cr
\vdots& & \vdots
\cr
x(m,1)&\cdots&x(m,
\ell)} %
\]
of size $m\times\ell$ of elements of $\mathcal A$, the
$i$th line being the $i$th chromosome.
The evolution of the population is random and it is driven
by two antagonistic forces: replication and mutation.

\medskip\textit{Replication.}
The replication favors the development of fit chromosomes.
The fitness of a chromosome is encoded in a fitness function
\[
A\dvtx {\mathcal A}^\ell\to[0,+\infty[. \index{$A$} %
\]
With the help of the fitness function $A$, we define a selection function
$F\dvtx {\mathcal A}^\ell\times({\mathcal A}^\ell)^m\to[0,1]$
by setting
\begin{eqnarray*}
&& \forall u\in{\mathcal A}^\ell,\ \forall x\in\bigl({\mathcal
A}^\ell\bigr)^m
\qquad F(u,x) = \frac{A(u)}{
A(x(1))+\cdots+A(x(m))} \sum_{1\leq i\leq m}1_{x(i)=u}.
\end{eqnarray*}
The population $x$ being fixed, the values
$F(u,x)$, $u\in{\mathcal A}^\ell$, define a probability distribution
over ${\mathcal A}^\ell$.
The value
$F(u,x)$ is the probability of choosing~$u$ when sampling from the
population~$x$.

\medskip\textit{Mutation.}
The mutation mechanism is the same
for all the loci, and mutations occur independently.
We denote by $q\in\,]0,1-1/\kappa[\index{$q$}$ the probability that
a mutation occurs
at one particular locus. If a mutation occurs, then the letter is replaced
randomly by another letter, chosen uniformly over the
$\kappa-1$ remaining letters.
Mutations are rare, and the most likely outcome for a given letter is to
stay unaltered; this is why we impose that $q\leq1-1/\kappa$.
We encode this mechanism in a mutation matrix
\[
M(u,v),\qquad u,v\in{\mathcal A}^\ell, \index{$M(\cdot,\cdot)$}
\]
where $M(u,v)$ is the probability that the chromosome $u$ is transformed
by mutation into the chromosome $v$.
The analytical formula for
$M(u,v)$ is
\[
M(u,v) = \prod_{j=1}^\ell
\biggl((1-q){1}_{u(j)=v(j)} +\frac{q}{\kappa-1} {1}_{u(j)\neq v(j)}
\biggr).
\]

\textit{Transition matrix.}
We consider the classical Wright--Fisher model. In this model,
generations do not overlap.
The mechanism to build
a new generation is divided in two steps.
In the first step,
$m$ chromosomes are sampled with
replacement
from the population.
The sampling law is given by the selection function.
In the second step,
each chromosome
mutates according to the law specified by the mutation matrix.
For $n\geq0$, we denote by $X_n$ the $n$th
generation.
The Wright--Fisher model
is
the Markov chain $(X_n)_{n\in\mathbb N}\index{$X_n$}$
on the space
$ ({\mathcal A}^\ell)^m$
whose
transition matrix is given by
%
\begin{eqnarray*}
&& \forall n\in{\mathbb N},\ \forall x,y\in\bigl({\mathcal A}^\ell
\bigr)^m
\\
&& \qquad
P (X_{n+1}=y |
X_n=x ) = \prod_{1\leq i\leq m} \biggl( \sum
_{u\in{\mathcal A}^\ell} F (u,x ) M \bigl(u,y(i) \bigr) \biggr).
\end{eqnarray*}

\section{Main results}\label{mainres}
We present the main results in this section.

\medskip\textit{Sharp peak landscape.}
We will consider only the sharp peak landscape defined as follows.
We fix a specific sequence, denoted by $w^*$, called the wild type or the
master sequence.
Let $\sigma>1\index{$\sigma$}$ be a fixed real number.
The fitness function $A$ is given by
\[
\forall u\in{\mathcal A}^\ell\qquad A(u) = \cases{ 1, &\quad if $u
\neq w^*$,
\cr
\sigma, &\quad if $u=w^*$.} %
\]

\textit{Density of the master sequence.}
We denote by $N(x)\index{$N(x)$}$
the number of copies of the master sequence~$w^*$
present in the population $x$:
\[
N(x) = \operatorname{card} \bigl\{ i\dvtx  1\leq i\leq m, x(i)=w^* \bigr\}.
\]
We are interested in the expected
density of the master sequence in the steady state
distribution of the process, that is,
\[
\operatorname{Master}(\sigma,\ell,m,q) = \lim_{n\to\infty} E \biggl(
\frac{1}{m} N(X_n) \biggr) \index{$\operatorname{Master}$},
\]
%
as well as the variance
\[
\operatorname{Variance}(\sigma,\ell,m,q) = \lim_{n\to\infty} E
\biggl( \biggl( \frac{1}{m} N(X_n) -\operatorname{Master}(
\sigma,\ell,m,q) \biggr)^2 \biggr) \index{$\operatorname{Variance}$}.
\]
%
The ergodic theorem for Markov chains ensures that the
above limits exist.
We denote by $I(p,t)$ the rate function governing
the large deviations of the binomial law of parameter~$p\in[0,1]$,
given by
\[
\forall t\in[0,1]\qquad I(p,t) = t\ln\frac{t}{p} +(1-t)\ln
\frac{1-t}{1-p}.
\]
We define,
for
$a\in\,]0,+\infty[$,
\begin{eqnarray*}
\rho^*(a) &=& \cases{ \displaystyle\frac{\sigma e^{-a}-1}{\sigma-1},
&\quad if $\sigma
e^{-a}>1$,
\cr
0, &\quad if $\sigma e^{-a}\leq1$,}
\\
\psi(a) &=& \inf_{l\in{\mathbb N}} \inf\Biggl\{ \sum
_{k=0}^{l-1} I \biggl(\frac{\sigma\rho_k}{(\sigma-1)\rho_k+1},
\gamma_k \biggr)+ \gamma_k I \biggl(e^{-a},
\frac{\rho_{k+1}}{\gamma_k} \biggr)\dvtx
\\
&&\hspace*{35pt} \rho_0= \rho^*(a), \rho_{l}=0, \rho_k,
\gamma_k\in[0,1]\mbox{ for }0\leq k<l \Biggr\}.
\end{eqnarray*}
Since $I(p,0)=-\ln(1-p)$, we have
\[
\psi(a) \leq I \biggl(\frac{\sigma\rho^*(a)}{(\sigma-1)\rho^*(a)+1},0
\biggr) = \ln\frac{(\sigma-1)\rho^*(a)+1}{1-\rho^*(a)}.
\]
Thus the function $\psi$ is finite on $]0,\ln\sigma[$, and it
vanishes on
$[\ln\sigma,+\infty[$.
We will prove in Lemma~\ref{vpro} that $\psi$ is positive on
$]0,\ln\sigma[$.
%

\begin{theorem}\label{mainth}
We suppose that
%
\[
\ell\to+\infty,\qquad m\to+\infty,\qquad q\to0,
\]
in such a way that
\[
{\ell q} \to a\in\,]0,+\infty[, \qquad\frac{m}{\ell}\to\alpha\in
[0,+\infty].
\index{$a,\alpha$} %
\]
We have the following dichotomy:
\begin{itemize}
\item If $\alpha\psi(a)<\ln\kappa$, then
$\operatorname{Master} (\sigma,\ell,m,q ) \to0$.

\item If $\alpha\psi(a)>\ln\kappa$, then
$\operatorname{Master} (\sigma,\ell,m,q ) \to
\rho^*(a)$.
\end{itemize}

In both cases, we have
$\operatorname{Variance} (\sigma,\ell,m,q ) \to0$.
\end{theorem}

%
The statement of the theorem holds also in the case where $\alpha$ is
null or infinite, but $a$ must belong to $]0,+\infty[$.
This result is very similar to the result for the Moran model.
Therefore all the comments made for the Moran model apply here
as well.
The main difference is that the function $\psi(a)$ is more complicated.
While we could obtain an explicit formula in the case
of the Moran model, here the function~$\psi(a)$ is the solution of
a complicated variational problem.
The general structure of the proof is similar to the one for
the Moran model.
We use the lumping theorem to reduce the size of the state space.
We couple the lumped processes with different initial conditions.
The coupling for the occupancy process turns out to be monotone.
We construct then a lower and an upper process.
These processes behave like the original process
in the neutral region and like a
Wright--Fisher model with two alleles whenever the master sequence
is present in the population.
The dynamics of these models is analyzed with a specific implementation
of the Freidlin--Wentzell theory.
We compute estimates of the persistence time of the
master sequence, as well as its equilibrium density.
Although the results are similar to the case of the Moran model, this
part is much more technical in the case of the
Wright--Fisher model. Indeed, in the case of the Moran model, we
needed simply to estimate some explicit formula associated
to the birth and death model introduced by
Nowak and Schuster \cite{NS}.
The approach used here to handle the
Wright--Fisher model is quite robust, and it should work for
other variants of the model.
In the final section we analyze the discovery time of the master
sequence. This part is similar to the case of the Moran model.
It is even simpler, so most proofs are omitted.

\section{Lumping}\label{seclum}
We denote by $d_H$ the Hamming distance between two chromosomes
\[
\forall u,v\in{\mathcal A}^\ell\qquad d_H(u,v) =
\operatorname{card} \bigl\{ j\dvtx  1\leq j\leq\ell, u(j)\neq v(j) \bigr\}.
\index{$d_H$} %
\]
%
We define a function
$
{H}\dvtx {\mathcal A}^\ell\to
\{ 0,\ldots,\ell\}
\index{$H$}$
by setting
\[
\forall u\in{\mathcal A}^\ell\qquad H(u) = d_H
\bigl(u,w^* \bigr).
\]
We define further a
vector function
${\mathbb H}\dvtx { ({\mathcal A}^\ell)^m}\to
\{ 0,\ldots,\ell\}^m
\index{$\mathbb H$}
$
by setting
\[
\forall x = \pmatrix{ x(1)
\cr
\vdots
\cr
x(m)} \in\bigl({\mathcal
A}^\ell\bigr)^m \qquad{\mathbb H}(x) = \pmatrix{ H
\bigl(x(1) \bigr)
\cr
\vdots
\cr
H \bigl(x(m) \bigr)}. %
\]

\textit{Mutation.}
We state some results on the mutation matrix that have been
proved in~\cite{CE}.
The mutation matrix is lumpable with respect
to the function $H$.
Let $b,c\in\{ 0,\ldots,\ell\} $, and let $u\in{\mathcal A}^\ell
$ such that $H(u)=b$.
The sum
\[
\mathop{\sum_{w\in{{\mathcal A}^\ell}}}_{H(w)=c} M(u,w)
\]
does not depend on $u$ in $H^{-1}(\{ b \})$.
It is a function of $b$ and $c$ only, which we denote by
$M_H(b,c) \index{$M_H$}$.
The coefficient
$M_H(b,c)$ is equal to
\[
\mathop{\mathop{\sum_{0\leq k\leq\ell
-b}}_{0\leq l\leq b}}_{k-l=c-b}
{ \dbinom{\ell-b} {k}} {\dbinom{b} {l}} q^k 
(1-q
)^{\ell-b-k} \biggl(\frac{q}{\kappa-1} \biggr)^l \biggl(1-
\frac{q}{\kappa-1} \biggr)^{b-l}. %
\]

\textit{Replication.}
The\index{$A_H$} fitness function $A$ of the
sharp peak landscape can be factorized through $H$. If we define
\[
\forall b\in\{ 0,\ldots,\ell\} \qquad A_H(b) = \cases{ \sigma, &
\quad if $b=0$,
\cr
1, &\quad if $b\geq1$,}
\]
then we have
\[
\forall u\in{\mathcal A}^\ell\qquad A(u) = A_H
\bigl(H(u)\bigr).
\]
%

\textit{Distance process.}
We define the distance process
$(D_n)_{n\geq0}
$\index{$D_n$}
by
\[
\forall n\geq0\qquad D_n = {\mathbb H} (X_n ).
\]
As in \cite{CE}, it can be checked
that the Markov chain
$(X_n)_{n\geq0}$ is lumpable with respect to the partition of
$ ({\mathcal A}^\ell)^m$ induced by the map $\mathbb H$,
so that the distance process
$(D_n)_{n\geq0}$
is a genuine Markov chain.
Its
transition matrix $p_H$ is given by
\begin{eqnarray*}
&& \forall d,e\in\{ 0,\ldots,\ell\}^m
\\
&& \qquad
p_H (d,e ) = \prod_{1\leq i\leq m} \biggl( \sum
_{1\leq j\leq m} \frac{A_H(j)
M_H (d(j),e(i) )
}{
A_H(d(1))+\cdots+A_H(d(m))} \biggr).
\end{eqnarray*}

\textit{Occupancy process.}
We denote by ${\mathcal P}^m_{\ell+1}\index{${\mathcal P}^m_{\ell
+1}$}$ the set of
the ordered partitions of the integer $m$ in at most
$\ell+1$ parts,
\[
{\mathcal P}^m_{\ell+1} = \bigl\{ \bigl(o(0),\ldots,o(\ell)
\bigr)\in{\mathbb N}^{\ell+1}\dvtx  o(0)+\cdots+o(\ell)=m \bigr\}.
\]
These partitions are interpreted as occupancy distributions. The
partition
$(o(0), \ldots,o(\ell))$
corresponds to a population in which
$o(l)$ chromosomes are at Hamming distance $l$ from the
master sequence,
for any $l\in\{ 0,\ldots,\ell\}$.
Let $\mathcal O \index{${\mathcal O}$}$ be the map which associates to each
population $x$
its occupancy distribution ${\mathcal O}(x)= (o(x,0),\ldots,o(x,\ell))$,
defined by
\[
\forall l\in\{ 0,\ldots,\ell\}\qquad o(x,l) = \operatorname{card} \bigl
\{ i\dvtx  1
\leq i\leq m, d_H\bigl(x(i),w^*\bigr)=l \bigr\}.
\]
%
For $d\in\{ 0,\ldots,\ell\}^m$, we set
\[
o_H(d,l) = \operatorname{card} \bigl\{ i\dvtx  1\leq i\leq
m, d(i)=l \bigr\},
\]
and we define a map
${\mathcal O}_H\dvtx \{ 0,\ldots,\ell\}^m\to{\mathcal P}^m_{\ell+1}
\index{${\mathcal O}_H$}$
by setting
\[
{\mathcal O}_H(d) = \bigl(o_H(d,0),
\ldots,o_H(d,\ell)\bigr).
\]
%
We define the occupancy process
$(O_n)_{n\geq0} \index{$\Omega_n$}$ by setting
\[
\forall n\geq0\qquad O_n = {\mathcal O}(X_n) = {
\mathcal O}_H(D_n).
\]
As in \cite{CE}, it can be checked
that the Markov chain
$(D_n)_{n\geq0}$ is lumpable with respect to the partition of
$ {\{ 0,\ldots,\ell\}^m}$
induced by the map ${\mathcal O}_H$,
so that the occupancy process
$(O_n)_{n\geq0}$
is a genuine Markov chain.
Its
transition matrix $p_O$ is given by
%
\begin{eqnarray*}
\forall o,o'\in{\mathcal P}^m_{\ell+1}
\qquad
p_{O} \bigl(o,o' \bigr) &=& \prod
_{0\leq h\leq\ell} \biggl( \frac{
\sum_{k\in\{ 0,\ldots,\ell\} }
o(k)A_H(k)
M_H (k,h)
}{
\sum_{0\leq h\leq\ell}
{o}(h)
{A_H(h)}} \biggr)^{o'(h)}.
\end{eqnarray*}

\section{Monotonicity}\label{secmono}
A crucial property for comparing the Wright--Fisher model with other processes
is monotonicity. We will realize a coupling of
the lumped Wright--Fisher processes with different
initial conditions, and we will deduce the monotonicity from the coupling
construction.
All the processes will be built on a single large probability space.
We consider a probability space $(\Omega,{\mathcal F}, P)$
containing the following collection of independent random variables,
all of them following the uniform law on the interval
$[0,1]$:
\begin{eqnarray*}
&\displaystyle U_{n}^{i,j},\qquad n\geq1,\qquad1\leq i\leq m,\qquad1
\leq j\leq\ell\index{$U_{n,l}$},&
\\
&\displaystyle S_{n}^i,\qquad n\geq1 \index{$S_n$},\qquad1\leq i\leq
m.&
\end{eqnarray*}

\subsection{Coupling of the lumped processes}\label{coulum}
We build here a coupling of the lumped processes.
We set
\[
\forall n\geq1\qquad
R_n = \pmatrix{
S_n^1,  U_{n}^{1,1},\ldots,U_{n}^{1,\ell}
\cr
\vdots\hspace*{15pt}\vdots\hspace*{15pt}\cdots\hspace*{15pt}\vdots
\cr
S_n^m, U_{n}^{m,1},\ldots,U_{n}^{m,\ell}}.
\]
The matrix $R_n$ is the random input which is used to perform the
$n$th step of the Markov chains.
We denote by ${\mathcal R}$ the set of the matrices
of size $m\times(\ell+1)$ with coefficients in $[0,1]$.
The sequence
$(R_n)_{n\geq1}$ is a sequence of independent identically distributed
random matrices with values in
${{\mathcal R}}$.

\medskip\textit{Mutation.}
We define a map 
\[
{\mathcal M}_H\dvtx \{ 0,\ldots,\ell\} \times[0,1]^\ell\to\{
0,\ldots,\ell\} \index{${\mathcal M}_H$}
\]
in order to
couple the mutation mechanism starting with different
chromosomes.
Let $b\in\{ 0,\ldots,\ell\} $, and let $u_1,\ldots,u_\ell\in
[0,1]^\ell$.
The map ${\mathcal M}_H$ is defined by setting
\[
{\mathcal M}_H(b, u_1,\ldots,u_\ell)= b-\sum
_{k=1}^b1_{u_k<q/(\kappa-1)} +\sum
_{k=b+1}^\ell1_{u_k>1-q}.
\]
The map ${\mathcal M}_H$ is built in such a way that,
if $U_1,\ldots,U_\ell$ are
random variables
with uniform law on
the interval
$[0,1]$,
all being independent, then for any
$b\in\{ 0,\ldots,\ell\} $, the law of
${\mathcal M}_H(b,
U_1,\ldots,U_\ell)$
is given by the line of the mutation matrix $M_H$ associated to $b$,
that is,
\[
\forall c\in\{ 0,\ldots,\ell\} \qquad P \bigl( {\mathcal M}_H(b,
U_1,\ldots,U_\ell)=c \bigr) = M_H(b,c).
\]

\textit{Selection for the distance process.}
We realize the replication mechanism with the help of a selection
map
\[
{\mathcal S}_H\dvtx \{ 0,\ldots,\ell\}^m\times[0,1]\to\{ 1,
\ldots,m \}. \index{${\mathcal S}_H$} %
\]
Let $d\in\{ 0,\ldots,\ell\}^m$, and let $s\in[0,1[$. We define
${\mathcal S}_H(d,s)=i$ where $i$ is the unique index in $\{ 1,\ldots,m
\}$ satisfying
\[
\frac{A_H(d(1))
+\cdots+ A_H(d(i-1))}{
A_H(d(1))
+\cdots+ A_H(d(m))} \leq s < 
\frac{A_H(d(1))
+\cdots+ A_H(d(i))}{
A_H(d(1))
+\cdots+ A_H(d(m))}.
\]
The map ${\mathcal S}_H$ is built in such a way that,
if $S$ is
a random variable
with uniform law on
the interval
$[0,1]$,
then for any
$d\in\{ 0,\ldots,\ell\}^m$,
the law of
${\mathcal S}_H(d,S)$
is given by
\[
\forall i\in\{ 1,\ldots,m \}\qquad P \bigl( {\mathcal S}_H(d, S)=i
\bigr) = \frac{
{{A_H(d(i))} }
}{
A_H(d(1))
+\cdots+ A_H(d(m))}.
\]
%
%

\textit{Coupling for the distance process.}
We build a deterministic map
%
\[
\Psi_H\dvtx  \{ 0,\ldots,\ell\}^m\times{{\mathcal R}}\to\{
0,\ldots,\ell\}^m \index{$\Psi_H$} %
\]
in order to realize the coupling between distance processes
with various initial conditions.
The coupling map $\Psi_H$ is defined by
\begin{eqnarray*}
&& \forall r\in{\mathcal R},\ \forall d\in\{ 0,\ldots,\ell\}^m
\\
&&\qquad \Psi_H(d,r) = \pmatrix{ {\mathcal M}_H \bigl(d\bigl({
\mathcal S}_H\bigl(d,r(1,1)\bigr)\bigr),r(1,2),\ldots,r(1,\ell+1)
\bigr)
\cr
\qquad\vdots\qquad
\cr
{\mathcal M}_H \bigl(d\bigl({
\mathcal S}_H\bigl(d,r(m,1)\bigr)\bigr),r(m,2),\ldots,r(m,\ell+1)
\bigr)}.
\end{eqnarray*}
%
The coupling is then built in a standard way with the help of the
i.i.d. sequence
$(R_n)_{n\geq1}$
and the map
$\Psi_H$.
Let $d\in\{ 0,\ldots,\ell\}^m$ be the starting point of the process.
We build the distance process
$(D_n)_{n\geq0}
\index{$D_n$}$
by setting
$D_0=d$ and
\[
\forall n\geq1\qquad D_n = \Psi_H (D_{n-1},
R_n ). %
\]
A routine check shows that the process
$(D_n)_{n\geq0}$
is a Markov chain starting from $d$
with the adequate transition matrix. This way we have coupled
the distance processes with various initial conditions.

\medskip\textit{Selection for the occupancy process.}
We realize the replication mechanism with the help of a selection
map
\[
{\mathcal S}_O\dvtx {\mathcal P}^m_{\ell+1}
\times[0,1]\to\{ 0,\ldots,\ell\} \index{${\mathcal S}_O$}.
\]
Let $o\in{\mathcal P}^m_{\ell+1}$, and let $s\in[0,1[$. We define
${\mathcal S}_O(o,s)=l$ where $l$ is the unique index in $\{ 0,\ldots,\ell\} $ satisfying
\[
\frac{o(0) A_H(0)
+\cdots+ o(l-1) A_H(l-1)}{
o(0) A_H(0)
+\cdots+ o(\ell) A_H(\ell)} 
\leq s< 
\frac{o(0) A_H(0)
+\cdots+ o(l) A_H(l)}{
o(0) A_H(0)
+\cdots+ o(\ell) A_H(\ell)}.
\]
The map ${\mathcal S}_O$ is built in such a way that,
if $S$ is
a random variable
with uniform law on
the interval
$[0,1]$,
then for any
$o\in{\mathcal P}^m_{\ell+1}$,
the law of
${\mathcal S}_O(o,S)$
is given by
\[
\forall l\in\{ 0,\ldots,\ell\} \qquad P \bigl( {\mathcal S}_O(o,
S)=l \bigr) = \frac{
{{o(l) A_H(l)} }
}{
o(0) A_H(0)
+\cdots+ o(\ell) A_H(\ell)}.
\]

\textit{Coupling for the occupancy process.}
We build a deterministic map
%
\[
\Psi_O\dvtx  {\mathcal P}^m_{\ell+1}\times{{\mathcal
R}}\to{\mathcal P}^m_{\ell+1} \index{$\Psi_O$} %
\]
in order to realize the coupling between occupancy processes
with various initial conditions.
The coupling map $\Psi_O$ is defined by
\begin{eqnarray*}
&& \forall r\in{\mathcal R},\ \forall o\in{\mathcal P}^m_{\ell+1}
\\
&&\qquad \Psi_O(o,r) = {\mathcal O}_H \pmatrix{ {\mathcal
M}_H \bigl({\mathcal S}_O\bigl(o,r(1,1)\bigr),r(1,2),
\ldots,r(1,\ell+1) \bigr)
\cr
\qquad\vdots\qquad
\cr
{\mathcal M}_H
\bigl({\mathcal S}_O\bigl(o,r(m,1)\bigr),r(m,2),\ldots,r(m,\ell+1)
\bigr)}.
\end{eqnarray*}
Let $o\in{\mathcal P}^m_{\ell+1}$ be the starting point of the process.
We build the occupancy process
$(O_n)_{n\geq0}
\index{$\Omega_n$}$
by setting
$O_0=o$ and
\[
\forall n\geq1\qquad O_n = \Psi_O (O_{n-1},
R_n ). %
\]
A routine check shows that the process
$(O_n)_{n\geq0}$
is a Markov chain starting from~$o$
with the adequate transition matrix. This way we have coupled
the occupancy processes with various initial conditions.

\subsection{Monotonicity of the model}
We first recall some standard definitions concerning monotonicity
and coupling
for stochastic processes.
A classical reference
is Liggett's book
\cite{LIG},
especially for applications to particle systems.
In the next two definitions,
we consider
a discrete time Markov chain
$(X_n)_{n\geq0}$
with
values in a space ${\mathcal E}$.
We suppose that the state space ${\mathcal E}$ is finite and
that it is equipped with a partial order $\leq$.
A function $f\dvtx {\mathcal E}\to{\mathbb R}$ is nondecreasing if
\[
\forall x,y\in{\mathcal E}\qquad x\leq y\quad\Rightarrow\quad f(x)\leq f(y).
\]
%
%

\begin{definition}
The Markov chain
$(X_n)_{n\geq0}$ is said to be monotone if,
for any nondecreasing function $f$, the function
\[
x\in{\mathcal E}\mapsto E \bigl(f(X_n) | X_0=x \bigr)
\]
is nondecreasing.
\end{definition}

A natural way to prove monotonicity is to construct an adequate coupling.
%

\begin{definition}
A coupling
for the Markov chain
$(X_n)_{n\geq0}$
is a family of processes
$(X_n^x)_{n\geq0}$
indexed by
$x\in{\mathcal E}$, which are all defined on the same probability
space, and
such that,
for $x\in{\mathcal E}$, the process
$(X_n^x)_{n\geq0}$ is the Markov chain
$(X_n)_{n\geq0}$
starting from $X_0=x$.
The coupling is said to be monotone if
\[
\forall x,y\in{\mathcal E}\qquad x\leq y\quad\Rightarrow\quad\forall
n\geq1
\qquad X_n^x\leq X_n^y.
\]
\end{definition}

If there exists a monotone coupling,
then the
Markov chain
is monotone.

We try next to apply these definitions to our model.
The space $\{ 0,\ldots,\ell\}^m$ is naturally endowed with a
partial order
\[
d\leq e\quad\Longleftrightarrow\quad\forall i\in\{ 1,\ldots,m \}
\qquad d(i)
\leq e(i).
\]
The map ${\mathcal M}_H$ is nondecreasing with respect to the Hamming class,
that is,
\begin{eqnarray*}
&& \forall b,c \in\{ 0,\ldots,\ell\},\ \forall u_1,
\ldots,u_\ell\in[0,1]
\\
&& \qquad
b\leq c\quad\Rightarrow\quad{\mathcal
M}_H(b,u_1,\ldots,u_\ell) \leq{\mathcal
M}_H(c,u_1,\ldots,u_\ell);
\end{eqnarray*}
see \cite{CE} for a detailed proof.
In the neutral case $\sigma=1$,
the map ${\mathcal S}_H$ does not depend on the population, in fact,
\[
\forall d \in\{ 0,\ldots,\ell\}^m,\ \forall s\in[0,1] \qquad{
\mathcal S}_H(d,s) = \lfloor ms\rfloor.
\]
As a consequence, we have
\begin{eqnarray*}
&&\forall d,e \in\{ 0,\ldots,\ell\}^m,\ \forall s\in[0,1]
\qquad
d\leq
e\quad\Rightarrow\quad d \bigl({\mathcal S}_H(d,s) \bigr) \leq e
\bigl({\mathcal S}_H(e,s) \bigr).
\end{eqnarray*}

%
\begin{lemma}\label{monphimor}
In the neutral case $\sigma=1$,
the map $\Psi_H$ is nondecreasing with respect to the distances,
that is,
\[
\forall d,e \in\{ 0,\ldots,\ell\}^m,\ \forall r\in{\mathcal R}
\qquad d\leq e \quad\Rightarrow\quad\Psi_H(d,r) \leq
\Psi_H(e,r).
\]
\end{lemma}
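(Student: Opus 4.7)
The plan is to prove this componentwise using the two monotonicity facts already established just above the lemma statement. Fix $d,e\in\zlm$ with $d\leq e$ and $r\in\cR$. For each $i\in\um$, the $i$-th coordinate of $\Psi_H(d,r)$ is
$$\cMH\bigl(d(\cSH(d,r(i,1))),\,r(i,2),\dots,r(i,\ell+1)\bigr),$$
and similarly for $e$. So it suffices to check that
$$\cMH\bigl(d(\cSH(d,r(i,1))),\,r(i,2),\dots,r(i,\ell+1)\bigr)\,\leq\,\cMH\bigl(e(\cSH(e,r(i,1))),\,r(i,2),\dots,r(i,\ell+1)\bigr)$$
for every $i$.

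To get this, I would chain the two displayed implications preceding the lemma. First, since $\sigma=1$, we have $\cSH(d,s)=\cSH(e,s)=\lfloor ms\rfloor$ for all $s$, and the componentwise inequality $d\leq e$ together with the hypothesis $d\leq e$ gives $d(\cSH(d,s))\leq e(\cSH(e,s))$; apply this with $s=r(i,1)$. Second, invoke the monotonicity of $\cMH$ in its first (Hamming-class) argument — with the same auxiliary coordinates $r(i,2),\dots,r(i,\ell+1)$ on both sides — to transfer the scalar inequality through $\cMH$. Combining these, the $i$-th coordinate of $\Psi_H(d,r)$ is $\leq$ the $i$-th coordinate of $\Psi_H(e,r)$. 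Doing this for every $i\in\um$ yields $\Psi_H(d,r)\leq\Psi_H(e,r)$ in the partial order on $\zlm$.

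There is essentially no obstacle here: the lemma is a straightforward composition of two monotonicities already in hand, and the only subtle point — that selection ignores the configuration — is exactly what the neutrality hypothesis $\sigma=1$ provides. The argument would break down outside the neutral case because $\cSH(d,s)$ and $\cSH(e,s)$ could then differ and pick indices whose $d$- and $e$-values are not ordered; this is precisely why the lemma is stated only for $\sigma=1$, and why the later bounding construction in Section~\ref{secbounds} will need to reintroduce selection through two-allele comparison processes rather than directly on the full distance process.
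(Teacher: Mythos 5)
Your proof is correct and follows the paper's own argument exactly: use neutrality to observe $\cSH(d,\cdot)=\cSH(e,\cdot)=\lfloor m\,\cdot\,\rfloor$, deduce the componentwise inequality $d(\cSH(d,r(i,1)))\leq e(\cSH(e,r(i,1)))$, and push it through the monotone map $\cMH$. Nothing to add.
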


\begin{pf}
Let $r\in{\mathcal R}$, and let $d,e\in\{ 0,\ldots,\ell\}^m$,
$d\leq e$.
Let $i\in\{ 1,\ldots,m \}$.
Since
\[
{\mathcal S}_H\bigl(d,r(i,1)\bigr) = {\mathcal S}_H
\bigl(e,r(i,1)\bigr) = \bigl\lfloor m r(i,1)\bigr\rfloor,
\]
we have
%
\[
d\bigl({\mathcal S}_H\bigl(d,r(i,1)\bigr)\bigr) \leq e
\bigl({\mathcal S}_H\bigl(e,r(i,1)\bigr)\bigr).
\]
This inequality and
the monotonicity of the map ${\mathcal M}_H$
imply that
\begin{eqnarray*}
&& {\mathcal M}_H \bigl( d\bigl({\mathcal S}_H
\bigl(d,r(i,1)\bigr)\bigr),r(i,2),\ldots,r(i,\ell+1) \bigr)
\\
&&\qquad \leq{\mathcal M}_H \bigl( e\bigl({\mathcal S}_H
\bigl(e,r(i,1)\bigr)\bigr),r(i,2),\ldots,r(i,\ell+1) \bigr).
\end{eqnarray*}
Therefore $\Psi_H(d,r) \leq
\Psi_H(e,r)$ as requested.
\end{pf}

%
\begin{corollary}\label{corneu}
In the neutral case $\sigma=1$,
the distance process
$(D_n)_{n\geq0}$ is monotone.
\end{corollary}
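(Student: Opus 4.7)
The plan is to deduce the corollary directly from Lemma~\ref{monphimor} via the coupling built in section~\ref{coulum}. The strategy is the standard ``monotone coupling implies monotonicity'' argument, so the content of the proof is mostly bookkeeping on top of the lemma.

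First I would fix $d,e\in\zlm$ with $d\leq e$, and build both processes on the common probability space $(\Omega,\cF,P)$ using the single i.i.d.\ sequence $(R_n)_{n\geq 1}$. Define
\begin{equation*}
D_0^d=d,\qquad D_n^d=\Psi_H(D_{n-1}^d,R_n)\quad(n\geq 1),
\end{equation*}
and analogously $D_0^e=e$, $D_n^e=\Psi_H(D_{n-1}^e,R_n)$. By the construction in section~\ref{coulum}, each of these is a version of the distance Markov chain with the correct transition matrix, starting from $d$ and $e$ respectively.

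Next I would prove by induction on $n$ that $D_n^d\leq D_n^e$ almost surely. The base case $n=0$ is the hypothesis $d\leq e$. For the inductive step, assuming $D_{n-1}^d\leq D_{n-1}^e$ pointwise on $\Omega$, apply Lemma~\ref{monphimor} pathwise with the realization $r=R_n(\omega)$ to obtain
\begin{equation*}
D_n^d=\Psi_H(D_{n-1}^d,R_n)\,\leq\,\Psi_H(D_{n-1}^e,R_n)=D_n^e.
\end{equation*}
This yields a monotone coupling of the distance chain indexed by its starting point.

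Finally, for any non--decreasing function $f:\zlm\to\R$ and any $n\geq 0$, the inequality $f(D_n^d)\leq f(D_n^e)$ holds almost surely, so taking expectations gives
\begin{equation*}
E\bigl(f(D_n)\,|\,D_0=d\bigr)=E\bigl(f(D_n^d)\bigr)\,\leq\,E\bigl(f(D_n^e)\bigr)=E\bigl(f(D_n)\,|\,D_0=e\bigr),
\end{equation*}
which is exactly the definition of monotonicity. There is no real obstacle here: all the work has already been done in Lemma~\ref{monphimor}; the corollary is just the translation from a pathwise coupling statement into a distributional monotonicity statement. The only point worth stating explicitly in the write-up is that the hypothesis $\sigma=1$ is used solely through the lemma (in order for $\cS_H$ to be population-independent and hence for $\Psi_H$ to be monotone).
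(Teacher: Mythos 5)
Your proof is correct and matches the paper's implicit argument: the paper states the corollary without further comment, relying on the general principle (stated just after the definition of monotone coupling) that the existence of a monotone coupling implies monotonicity of the chain, together with the coupling construction via $\Psi_H$ and the i.i.d. inputs $(R_n)$ from section~\ref{coulum} and the pathwise monotonicity in Lemma~\ref{monphimor}. You have simply made explicit the standard induction that the paper leaves to the reader.
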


Unfortunately, the map
$\Psi_H$ is not monotone for $\sigma>1$. Indeed, suppose that
%
\begin{eqnarray*}
&\displaystyle \kappa=3,\qquad\sigma=2,\qquad m=3,\qquad\ell\geq2,&
\\
&\displaystyle \tfrac{2}{3}<s_1< \tfrac{3}{4},\qquad
\tfrac{3}{4}<s_2<1,\qquad\tfrac{3}{4}<s_3<1,&
\\
&\displaystyle \forall i\in\{ 1,2,3 \},\ \forall j\in\{ 1,\ldots,\ell\} \qquad
u_{i,j}\in\biggl[ \frac{q}{3},1-q \biggr].&
\end{eqnarray*}
Recall that
\[
r = \pmatrix{ s_1, u_{1,1},\ldots,u_{1,\ell}
\cr
s_2, u_{2,1},\ldots,u_{2,\ell}
\cr
s_3,
u_{3,1},\ldots,u_{3,\ell}}. %
\]
We have then
%
\[
\Psi_H \pmatrix{ 0
\cr
2
\cr
1} = \pmatrix{ 2
\cr
1
\cr
1},\qquad
\Psi_H \pmatrix{ 1
\cr
2
\cr
1}= \pmatrix{ 1
\cr
1
\cr
1}. %
\]
This creates a serious
complication.
To get around this problem, we
lump further the distance process in order to build the occupancy
process. It turns out that the occupancy process
is monotone even in the nonneutral case.
%
We define an order $\preceq\index{$\preceq$}$ on
${\mathcal P}^m_{\ell+1}$
as follows.
Let
$o=(o(0),\ldots,o(\ell))$
and
$o'=(o'(0),\ldots,o'(\ell))$
belong to~${\mathcal P}^m_{\ell+1}$.
We say that $o$ is smaller than or equal to $o'$,
which we denote by $o\preceq o'$, if
%
\[
\forall l\leq\ell\qquad o(0)+\cdots+o(l) \leq o'(0)+
\cdots+o'(l).
\]
As shown in \cite{CE},
the map ${\mathcal S}_O$ is nonincreasing with respect to the occupancy
distribution, that is,
\[
\forall o,o' \in{\mathcal P}^m_{\ell+1},\
\forall s\in[0,1]\qquad o\preceq o'\quad\Rightarrow\quad{\mathcal
S}_O(o,s) \geq{\mathcal S}_O\bigl(o',s
\bigr). %
\]

%
\begin{lemma}\label{monophio}
The map $\Psi_O$ is nondecreasing with respect to
the occupancy distribution, that is,
\[
\forall o,o' \in{\mathcal P}^m_{\ell+1},\
\forall r\in{\mathcal R} \qquad
o\preceq o' \quad
\Rightarrow\quad\Psi_O(o,r) \preceq\Psi_O
\bigl(o',r\bigr). %
\]
\end{lemma}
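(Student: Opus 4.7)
The proof plan is to chase the monotonicity through the three ingredients that compose $\Psi_O$, namely the selection map $\cS_O$, the mutation map $\cMH$, and the map $\cO_H$ that collects a vector of Hamming classes into an occupancy distribution. A small conceptual subtlety is that $\preceq$ on $\pml$ is a ``mass-near-$0$'' order, while on the intermediate vectors $(c_1,\dots,c_m)\in\zlm$ we work with the coordinatewise order, and the two orders flip when we pass through $\cO_H$.

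First I would fix $o,o'\in\pml$ with $o\preceq o'$ and $r\in\cR$, and examine what happens row by row. For each $i\in\um$, the already-cited non-increasing property of the selection map gives
\begin{equation*}
\cS_O(o,r(i,1))\,\geq\,\cS_O(o',r(i,1))\,.
\end{equation*}
Call these two integers $b_i$ and $b_i'$, so $b_i\geq b_i'$ in $\zl$. Next I would apply the monotonicity of $\cMH$ in its first argument (stated just before the non-monotonicity example for $\Psi_H$) to the common mutation input $r(i,2),\dots,r(i,\ell+1)$, obtaining
\begin{equation*}
c_i'\,=\,\cMH(b_i',r(i,2),\dots,r(i,\ell+1))\,\leq\,\cMH(b_i,r(i,2),\dots,r(i,\ell+1))\,=\,c_i\,.
\end{equation*}
Thus the vector $(c_1',\dots,c_m')\in\zlm$ is coordinatewise dominated by $(c_1,\dots,c_m)$.

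The remaining step is to translate coordinatewise domination of two vectors in $\zlm$ into the $\preceq$ order of their $\cO_H$-images, with the expected reversal of direction. For every $l\in\zl$ and every $i\in\um$, the inequality $c_i'\leq c_i$ gives $\{i:c_i\leq l\}\subseteq\{i:c_i'\leq l\}$, hence
\begin{equation*}
\sum_{h=0}^{l}o_H(c,h)\,=\,\card\{i:c_i\leq l\}\,\leq\,\card\{i:c_i'\leq l\}\,=\,\sum_{h=0}^{l}o_H(c',h)\,,
\end{equation*}
which is exactly $\cO_H(c)\preceq\cO_H(c')$. Since by construction $\Psi_O(o,r)=\cO_H(c)$ and $\Psi_O(o',r)=\cO_H(c')$, this yields the desired inequality $\Psi_O(o,r)\preceq\Psi_O(o',r)$.

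There is no real obstacle here: the lemma is a clean assembly of two monotonicity facts already proved (for $\cS_O$ in \cite{CE} and for $\cMH$ earlier in this section) with the elementary observation in the last display. The only thing to watch is the direction reversal between the ``fitness'' order on Hamming classes (smaller is fitter) and the $\preceq$ order on occupancies (larger means more mass concentrated near $0$), which is precisely what makes $\cS_O$ non-increasing and $\cO_H$ an order-reversing map from the product order on $\zlm$ to $\preceq$ on $\pml$.
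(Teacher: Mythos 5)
Your proof is correct and follows the same route as the paper: chase monotonicity through $\cS_O$ (non-increasing) and $\cMH$ (non-decreasing), then pass to occupancy distributions. You actually spell out, with the partial-sum computation, the order-reversing step from coordinatewise order on $\zlm$ to $\preceq$ on $\pml$ via $\cO_H$, a step the paper leaves implicit (and where the paper's final line has a typographical slip, writing $\leq$ for $\preceq$).
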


\begin{pf}
Let
$r\in{\mathcal R}$, and let
$o,o' \in{\mathcal P}^m_{\ell+1}$ be such that
$o\preceq o'$.
Using the monotonicity of the map ${\mathcal S}_O$,
we have
\[
\forall i\in\{ 1,\ldots,m \}\qquad{\mathcal S}_O\bigl(o,r(i,1)
\bigr) \geq{\mathcal S}_O\bigl(o',r(i,1)\bigr).
\]
This inequality and
the monotonicity of the map ${\mathcal M}_H$
imply that
\begin{eqnarray*}
\forall i\in\{ 1,\ldots,m \}\qquad &&{\mathcal M}_H \bigl( {\mathcal
S}_O\bigl(o,r(i,1)\bigr),r(i,2),\ldots,r(i,\ell+1) \bigr)
\\
&&\qquad
\geq{\mathcal M}_H \bigl( {\mathcal S}_O
\bigl(o',r(i,1)\bigr),r(i,2),\ldots,r(i,\ell+1) \bigr).
\end{eqnarray*}
Therefore $\Psi_O(o,r) \leq
\Psi_O(o',r)$ as requested.
\end{pf}

%
\begin{corollary}\label{corocc}
The occupancy process
$(O_n)_{n\geq0}$ is monotone.
\end{corollary}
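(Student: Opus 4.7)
The plan is to deduce the corollary from Lemma~\ref{monophio} together with the coupling construction built in Section~\ref{coulum}, via the standard principle that the existence of a monotone coupling implies monotonicity of the chain.

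I would first fix two occupancy distributions $o,o'\in\pml$ with $o\preceq o'$ and, on the common probability space $(\Omega,\cF,P)$ carrying the i.i.d.\ sequence $(R_n)_{n\geq 1}$, construct simultaneously the two occupancy processes $(O_n^o)_{n\geq 0}$ and $(O_n^{o'})_{n\geq 0}$ by the recursion
\[
O_0^o = o,\quad O_0^{o'} = o',\qquad O_n^o = \Psi_O(O_{n-1}^o,R_n),\quad O_n^{o'} = \Psi_O(O_{n-1}^{o'},R_n).
\]
I would then prove by induction on $n\geq 0$ that $O_n^o\preceq O_n^{o'}$. The base case $n=0$ is the hypothesis. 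For the inductive step, assuming $O_{n-1}^o\preceq O_{n-1}^{o'}$, apply Lemma~\ref{monophio} with $r=R_n(\omega)$ pointwise in $\omega$: since $\Psi_O(\cdot,r)$ is non-decreasing with respect to $\preceq$, we get $O_n^o = \Psi_O(O_{n-1}^o,R_n)\preceq \Psi_O(O_{n-1}^{o'},R_n) = O_n^{o'}$. This delivers a monotone coupling of the occupancy process.

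From the existence of this monotone coupling, monotonicity of the chain is immediate: for any $\preceq$--non-decreasing function $f:\pml\to\R$ and any $o\preceq o'$, the almost sure inequality $f(O_n^o)\leq f(O_n^{o'})$ gives, upon taking expectations,
\[
E\big(f(O_n)\,|\,O_0=o\big)\,=\,E\big(f(O_n^o)\big)\,\leq\, E\big(f(O_n^{o'})\big)\,=\,E\big(f(O_n)\,|\,O_0=o'\big),
\]
which is the defining property of a monotone Markov chain. The main obstacle in this argument has already been cleared by Lemma~\ref{monophio} and by the fact (used there) that $\cS_O$ is non-increasing with respect to $\preceq$; once these are in hand, the corollary reduces to a routine pointwise induction on the sample path, and no extra analysis specific to the non-neutral case $\sigma>1$ is required.
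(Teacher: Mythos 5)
Your proposal is correct and matches the paper's (implicit) reasoning: the paper states the general principle that a monotone coupling implies a monotone chain, establishes the coupling via the i.i.d.\ sequence $(R_n)_{n\geq 1}$ and the recursion through $\Psi_O$, and proves in Lemma~\ref{monophio} that $\Psi_O(\cdot,r)$ is non-decreasing for each fixed $r$, from which the corollary follows at once. Your pointwise induction on $n$ is exactly the standard argument being invoked, and no further elaboration is needed.
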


\subsection{The FKG inequality}
\label{poco}
We consider the product space
$\{ 0,\ldots,\ell\} ^m$ equipped with the natural product order
\[
d\leq e\quad\Longleftrightarrow\quad\forall i\in\{ 1,\ldots,m \}
\qquad d(i)
\leq e(i).
\]
%

\begin{definition}
A probability measure $\mu$ on $\{ 0,\ldots,\ell\} ^m$
is said to have positive correlations if
for any
functions $f,g\dvtx \{ 0,\ldots,\ell\} ^m\to{\mathbb R}$ which are
nondecreasing, we have
\[
\sum_{d\in\{ 0,\ldots,\ell\} ^m} f(d)g(d) \mu(d) \geq\biggl(\sum
_{d\in\{ 0,\ldots,\ell\} ^m} f(d) \mu(d) \biggr) \biggl(\sum
_{d\in\{ 0,\ldots,\ell\} ^m} g(d) \mu(d) \biggr).
\]
\end{definition}

The Harris inequality, or
the FKG inequality in this context, says that any
product probability measure on $\{ 0,\ldots,\ell\} ^m$
has positive correlations.
The FKG inequality is in fact true for any product
probability measure on a product of the interval
$[0,1]$; see Section~2.2 of
Grimmett's book~\cite{GRI}.
As far as correlations are concerned,
there is not much to do with the original Wright--Fisher model because its
state space is not partially ordered.
So we examine the distance process.
%

\begin{proposition}\label{lawD}
Suppose that we are in the neutral case $\sigma=1$.
If the law of $D_0$ has
positive correlations, then
for any $n\geq0$, the law of $D_n$
has positive correlations.
\end{proposition}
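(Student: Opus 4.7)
I proceed by induction on $n$; the base case $n=0$ is the hypothesis. Assume the law of $D_{n-1}$ has positive correlations, and fix non-decreasing functions $f,g : \zl^m \to \R$. Define
$$\phi_f(d) = E\big[f(D_n)\,|\,D_{n-1}=d\big],\qquad
\phi_g(d) = E\big[g(D_n)\,|\,D_{n-1}=d\big],\qquad
\psi(d) = E\big[f(D_n)g(D_n)\,|\,D_{n-1}=d\big].$$
The plan is to establish two facts: (a) $\psi(d) \geq \phi_f(d)\phi_g(d)$ for every $d \in \zl^m$, and (b) $\phi_f$ and $\phi_g$ are non-decreasing on $\zl^m$. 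Once these are known, the induction closes in a single line:
$$E[f(D_n)g(D_n)] = E[\psi(D_{n-1})] \geq E[\phi_f(D_{n-1})\phi_g(D_{n-1})] \geq E[\phi_f(D_{n-1})]\,E[\phi_g(D_{n-1})] = E[f(D_n)]\,E[g(D_n)],$$
where the second inequality is exactly the induction hypothesis applied to the non-decreasing functions $\phi_f$ and $\phi_g$.

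For (a), the decisive observation is the product structure of the transition matrix of the distance process already derived in section~\ref{seclum}. The formula
$$p_H(d,e) \,=\, \prod_{1\leq i\leq m} \Big( \sum_{k\in\zl} F_H(k,d)\,M_H(k,e(i)) \Big)$$
shows that conditionally on $D_{n-1} = d$ the coordinates $D_n(1),\dots,D_n(m)$ are independent, so the conditional law $\mu_d$ of $D_n$ is a \emph{product} probability measure on $\zl^m$. By the Harris--FKG inequality recalled in section~\ref{poco}, every product measure on $\zl^m$ has positive correlations, and this gives precisely $\psi(d) \geq \phi_f(d)\phi_g(d)$.

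For (b), I invoke the monotone coupling built in section~\ref{coulum}. Writing $D_n = \Psi_H(D_{n-1}, R_n)$ with $R_n$ independent of $D_{n-1}$, Lemma~\ref{monphimor} guarantees that in the neutral case the map $d \mapsto \Psi_H(d,r)$ is non-decreasing for each fixed $r \in \cR$; hence $d \mapsto f(\Psi_H(d,r))$ and $d \mapsto g(\Psi_H(d,r))$ are non-decreasing, and integrating over the law of $R_n$ shows that $\phi_f$ and $\phi_g$ inherit this monotonicity. No genuine obstacle is anticipated: the whole argument reduces to noticing that in the neutral Wright--Fisher model the one-step transition factorizes into independent coordinates, so that FKG applies directly to each conditional law, and then coupling this with the already-established one-step monotonicity.
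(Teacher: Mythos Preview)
Your proof is correct and follows essentially the same route as the paper: both exploit that the conditional law of $D_n$ given $D_{n-1}=d$ is a product measure (so FKG applies pointwise), that the neutral distance process is monotone (so the conditional expectations $\phi_f,\phi_g$ are non-decreasing), and then close the induction by applying the positive-correlations hypothesis on $D_{n-1}$ to these non-decreasing functions. The paper phrases this as ``monotone probabilistic cellular automata preserve FKG'' and shows the step from $D_0$ to $D_1$ before iterating, but the content is identical to yours.
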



\begin{pf}
The Wright--Fisher model
$(X_n)_{n\geq0}$
can be seen as a probabilistic cellular
automaton. Indeed, given the population $X_n=x$ at time $n$,
the individuals $(X_{n+1}(i), 1\leq i\leq m)$ of the population
at time $n+1$ are independent. This still holds for the
distance process.
By Corollary~\ref{corneu}, the neutral
distance process
$(D_n)_{n\geq0}$ is monotone.
Monotone probabilistic cellular automata preserve the FKG inequality.
This is explained in detail by Mezi\'c \cite{MI}, and it was first
observed by Harris \cite{Har} at the very end of his article on
continuous time processes. Because the argument is very short, we
reproduce it here.
Suppose that the initial law $\mu$ of $D_0$ has
positive correlations.
Let
$f,g\dvtx \{ 0,\ldots,\ell\} ^m\to{\mathbb R}$ be two
nondecreasing
functions.
For any $d\in\{ 0,\ldots,\ell\}^m$, the conditional law of $D_1$
knowing that
$D_0=d$ is a product measure on $\{ 0,\ldots,\ell\}^m$, thus it
satisfies the
FKG inequality, whence
\begin{eqnarray*}
&& \forall d\in\{ 0,\ldots,\ell\}^m
\\
&&\qquad E \bigl(f(D_1)g(D_1) | D_0=d \bigr) \geq E
\bigl(f(D_1) | D_0=d \bigr) E \bigl(g(D_1) |
D_0=d \bigr).
\end{eqnarray*}
We integrate the inequality with respect to the initial law $\mu$:
\begin{eqnarray*}
&& \sum_{d\in\{ 0,\ldots,\ell\} ^m} E \bigl(f(D_1)g(D_1)
| D_0=d \bigr) \mu(d)
\\
&&\qquad \geq
\sum_{d\in\{ 0,\ldots,\ell\} ^m}
E \bigl(f(D_1) | D_0=d \bigr) E \bigl(g(D_1)
| D_0=d \bigr) \mu(d).
\end{eqnarray*}
Since $(D_n)_{n\geq0}$ is monotone, the maps
\begin{eqnarray*}
d\in\{ 0,\ldots,\ell\}^m&\mapsto& E \bigl(f(D_1) |
D_0=d \bigr),
\\
d\in\{ 0,\ldots,\ell\}^m&\mapsto& E
\bigl(g(D_1) | D_0=d \bigr),
\end{eqnarray*}
are nondecreasing.
By hypothesis,
the initial law $\mu$ has
positive correlations, therefore
%
\begin{eqnarray*}
\hspace*{-2pt}&& \sum_{d\in\{ 0,\ldots,\ell\} ^m} E \bigl(f(D_1) |
D_0=d \bigr) E \bigl(g(D_1) | D_0=d \bigr)
\mu(d)
\\
\hspace*{-2pt}&&\qquad  \geq
\biggl(\sum_{d\in\{ 0,\ldots,\ell\} ^m} E \bigl(f(D_1) |
D_0=d \bigr) \mu(d) \biggr) \biggl(\sum
_{d\in\{ 0,\ldots,\ell\} ^m} E \bigl(g(D_1) | D_0=d \bigr)
\mu(d) \biggr).
\end{eqnarray*}
The two above inequalities
imply
that the law of $D_1$ has positive correlations.
We conclude
by iterating the argument.
\end{pf}

\section{Stochastic bounds}\label{secbounds}
In this section, we take advantage of the monotonicity of the map $\Psi_O$
to compare the process
$(O_n)_{n\geq0}$ with simpler processes.
\subsection{Lower and upper processes}
We shall build
a lower
process
$(O^\ell_n)_{n\geq0} \index{$O^\ell_n$}$
and an upper
process
$(O^1_n)_{n\geq0} \index{$O^1_n$}$
satisfying
\[
\forall n\geq0\qquad O^\ell_n \preceq O_n
\preceq O^1_n.
\]
Loosely speaking, the upper process
evolves as follows. As long as there is no master sequence present
in the population, the process
$(O^1_n)_{n\geq0}$ evolves exactly as the initial process
$(O_n)_{n\geq0}$.
When the first master sequence appears, all the other chromosomes
are set in the Hamming class~$1$; that is, the process jumps to the
state $(1,m-1,0,\ldots,0)$. As long as the master sequence is present,
the mutations on nonmaster sequences leading to nonmaster sequences are
suppressed, and any mutation of a master sequence leads to a chromosome
in the first Hamming class.
The dynamics of the lower process is similar, except that the chromosomes
distinct from the master sequence are sent to the last Hamming
class~$\ell$
instead of the first one.
We shall next construct precisely these dynamics.
We define two maps
$\pi_\ell, \pi_1\dvtx {\mathcal P}^m_{\ell+1}\to{\mathcal P}^m_{\ell+1}
\index{$\pi_\ell,\pi_1$}$
by setting
%
\begin{eqnarray*}
\forall o\in{\mathcal P}^m_{\ell+1}\qquad
\pi_\ell(o) &=& \bigl( o(0),0,\ldots,0,m-o(0) \bigr),
\\
\pi_1(o) &=& \bigl(o(0),m-o(0),0,\ldots,0 \bigr).
\end{eqnarray*}
Obviously,
\[
\forall o\in{\mathcal P}^m_{\ell+1}\qquad
\pi_\ell(o) \preceq o \preceq\pi_1(o).
\]
We denote by ${\mathcal W}^*$ the set of the occupancy distributions
containing the master sequence,
that is,
\[
{\mathcal W}^*= \bigl\{ o\in{\mathcal P}^m_{\ell+1}\dvtx o(0)\geq1
\bigr\} \index{${\mathcal W}^*$} %
\]
and
by ${\mathcal N}$ the set of the occupancy distributions
which do not contain the master sequence,
that is,
\[
{\mathcal N} = \bigl\{ o\in{\mathcal P}^m_{\ell+1}\dvtx o(0)=0
\bigr\}. \index{${\mathcal N}$} %
\]
Let $\Psi_O$ be the coupling map defined in
Section~\ref{coulum}.
We define
a lower map $\Psi_O^{\ell}$ by setting,
for $o\in{\mathcal P}^m_{\ell+1}$ and $r\in{\mathcal R}$,
\[
\index{$\Psi_O^{\ell}$}
\Psi_O^{\ell}(o,r) = \cases{ \Psi_{O}(o,r), &
\quad if $o\in{\mathcal N}$ and $\Psi_O(o,r)\notin{\mathcal W}^*$,
\vspace*{3pt}\cr
\pi_\ell\bigl(\Psi_{O}(o,r) \bigr), &\quad if $o\in{
\mathcal N}$ and $\Psi_O(o,r)\in{\mathcal W}^*$,
\vspace*{3pt}\cr
\pi_\ell\bigl(\Psi_{O}\bigl(\pi_\ell(o),r\bigr)
\bigr), &\quad if $o\in{\mathcal W}^*$.}
\]
Similarly, we define
an upper map $\Psi_O^{1}$ by setting,
for $o\in{\mathcal P}^m_{\ell+1}$ and $r\in{\mathcal R}$,
\[
\index{$\Psi_O^{1}$}
\Psi_O^{1}(o,r) = \cases{ \Psi_{O}(o,r), &
\quad if $o\in{\mathcal N}$ and $\Psi_O(o,r)\notin{\mathcal W}^*$,
\vspace*{3pt}\cr
\pi_1 \bigl(\Psi_{O}(o,r) \bigr), &\quad if $o\in{
\mathcal N}$ and $\Psi_O(o,r)\in{\mathcal W}^*$,
\vspace*{3pt}\cr
\pi_1 \bigl(\Psi_{O}\bigl(\pi_1(o),r\bigr)
\bigr), &\quad if $o\in{\mathcal W}^*$.}
\]
A direct application of
Lemma~\ref{monophio} yields that the map
$\Psi_O^{\ell}$ is below the map
$\Psi_{O}$ and the map
$\Psi_O^{1}$ is above the map
$\Psi_{O}$
in the following sense:
%
\[
\forall r\in{\mathcal R},\ \forall o\in{\mathcal P}^m_{\ell+1}
\qquad\Psi_O^{\ell}(o,r) \preceq\Psi_{O}(o,r)
\preceq\Psi_O^{1}(o,r). %
\]
We\vspace*{1pt} define a lower process
$(O^\ell_n)_{n\geq0}$ and
an upper process
$(O^1_n)_{n\geq0}$
with the help
of the i.i.d. sequence
$(R_n)_{n\geq1}$ and
the maps $\Psi_O^{\ell}$,
$\Psi_O^{1}$ as follows.
Let $o\in{\mathcal P}^m_{\ell+1}$ be the starting point of the process.
We set
$O^\ell_0=O^1_0=o$ and
\[
\forall n\geq1\qquad O^\ell_n = \Psi_O^{\ell}
\bigl(O^\ell_{n-1}, R_n \bigr), \qquad
O^1_n = \Psi_O^{1}
\bigl(O^1_{n-1}, R_n \bigr). %
\]
%

\begin{proposition}\label{domiji}
Suppose that the three processes
$(O^\ell_n)_{n\geq0}$,
$(O_n)_{n\geq0}$,
$(O^1_n)_{n\geq0}$,
start from the same occupancy distribution $o$.
We have
\[
\forall n\geq0\qquad O^\ell_n \preceq O_n
\preceq O^1_n.
\]
\end{proposition}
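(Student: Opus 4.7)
The plan is to prove the proposition by induction on $n$, with the base case being immediate since the three processes share the initial condition $O^\ell_0 = O_0 = O^1_0 = o$.

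For the inductive step, I assume $O^\ell_{n-1}\preceq O_{n-1}\preceq O^1_{n-1}$ and establish the two chains of inequalities separately, each by composing two ingredients. For the lower bound, write
\begin{equation*}
O^\ell_n \,=\, \Psi_O^\ell(O^\ell_{n-1},R_n) \,\preceq\, \Psi_O(O^\ell_{n-1},R_n) \,\preceq\, \Psi_O(O_{n-1},R_n) \,=\, O_n\,.
\end{equation*}
The first inequality is the pointwise bound $\Psi_O^\ell(\cdot,r)\preceq\Psi_O(\cdot,r)$ (stated immediately after the definitions of the modified maps and justified by examining the three cases: equality in the first, $\pi_\ell(o')\preceq o'$ in the second, and $\pi_\ell(\Psi_O(\pi_\ell(o),r))\preceq\Psi_O(\pi_\ell(o),r)\preceq\Psi_O(o,r)$ in the third, using $\pi_\ell(o)\preceq o$ together with Lemma~\ref{monophio}). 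The second inequality is precisely Lemma~\ref{monophio} applied to $O^\ell_{n-1}\preceq O_{n-1}$ with the common random input $R_n$.

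The upper bound is symmetric: monotonicity of $\Psi_O$ gives $\Psi_O(O_{n-1},R_n)\preceq\Psi_O(O^1_{n-1},R_n)$, and the pointwise bound $\Psi_O(\cdot,r)\preceq\Psi_O^1(\cdot,r)$ yields $\Psi_O(O^1_{n-1},R_n)\preceq\Psi_O^1(O^1_{n-1},R_n)=O^1_n$; combining the two gives $O_n\preceq O^1_n$.

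There is essentially no serious obstacle here: the construction of a single driving sequence $(R_n)_{n\geq 1}$ in Section~\ref{coulum} is precisely what allows the inductive step to couple all three processes step by step, and the only mild care needed is in the pointwise comparison of $\Psi_O^\ell$ and $\Psi_O^1$ with $\Psi_O$, whose three-case definitions must each be checked against the bounds $\pi_\ell(o)\preceq o\preceq\pi_1(o)$ and the monotonicity of $\Psi_O$ itself. Once these pointwise bounds are in hand, the induction is a formal two-line argument repeated for each bound.
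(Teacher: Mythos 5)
Your proof is correct and follows essentially the same route as the paper: an induction on $n$ driven by the common random input $R_n$, combining Lemma~\ref{monophio} with the pointwise bounds $\Psi_O^\ell(\cdot,r)\preceq\Psi_O(\cdot,r)\preceq\Psi_O^1(\cdot,r)$. The only cosmetic difference is that you spell out the three-case verification of those pointwise bounds, which the paper instead records as a remark just before the proposition.
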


The proof is similar to the proof of Proposition~8.1 in \cite{CE}.
\subsection{Dynamics of the bounding processes}\label{dynabound}
We study next the dynamics of the processes
$(O^\ell_n)_{n\geq0}$
and
$(O^1_n)_{n\geq0}$
in ${\mathcal W}^*$. The computations
are the same for both processes. Throughout the section,
we fix $\theta$ to be either $1$ or~$\ell$, and we denote by
$(O^\theta_n)_{n\geq0}$ the corresponding process.
For the process
$(O^\theta_n)_{n\geq0}$, the states
\[
{\mathcal T}^\theta= \bigl\{ o\in{\mathcal P}^m_{\ell+1}\dvtx
o(0)\geq1\mbox{ and } o(0)+o(\theta)<m \bigr\} \index
{${\mathcal T}^\theta$}
\]
are transient,
while the populations in
$ {{\mathcal N}\cup({\mathcal W}^*\setminus{\mathcal T}^\theta
)}$
form a recurrent class.
Let us look at the transition mechanism of the process restricted to
$ {{\mathcal W}^*\setminus{\mathcal T}^\theta}$.
Since
\[
{{\mathcal W}^*\setminus{\mathcal T}^\theta} = \bigl\{ o\in{\mathcal
P}^m_{\ell+1}\dvtx  o(0)\geq1\mbox{ and } o(0)+o(\theta)=m
\bigr\},
\]
we see that a state of
$ {{\mathcal W}^*\setminus{\mathcal T}^\theta}$
is completely determined by the first occupancy number,
which is equal to
the number of copies of the master sequence present in the
population.
From the previous observations,
we conclude that,
whenever
$(O^\theta_n)_{n\geq0}$ starts in
$ {{\mathcal W}^*\setminus{\mathcal T}^\theta}$,
the dynamics\vspace*{1pt} of the number of master sequences~$(O^\theta_n(0))_{n\geq0}$
is Markovian
until the time of exit from
${\mathcal W}^*\setminus{\mathcal T}^\theta$.
We denote
by $(Z^\theta_n)_{n\geq0}\index{$Z^\theta_n$}$
a Markov chain
on $\{ 0,\ldots,m \}$
with the following
transition probabilities:
for
$h\in\{ 1,\ldots,m \}$ and $k\in\{ 0,\ldots,m \}$,
%
\[
\forall n\geq0\qquad P \bigl(Z^\theta_{n+1}=k |
Z^\theta_n=h \bigr) = 
P \bigl(O^\theta_{n+1}(0)=
k | O^\theta_n(0)= h \bigr), 
\]
and for $h=0$ and
$k\in\{ 0,\ldots,m \}$,
%
\[
\forall n\geq0\qquad P \bigl(Z^\theta_{n+1}=k |
Z^\theta_n=0 \bigr) = 
\pmatrix{ {m}
\cr
{k}}
M_H(\theta,0)^k \bigl(1-M_H(\theta,0)
\bigr)^{m-k}. %
\]
Let us denote by
$p^\theta(h,k)$ the above transition probability, and let us compute
its value. We use the definition of the transition mechanism of
$(O^\theta_n)_{n\geq0}$ to get
\[
p^\theta(h,k) = \sum_{i\in\{ 0,\ldots,m \}} \sum
_{
j=0
}^i p^\theta(h,i,j,k)
\]
%
where $p^\theta(h,i,j,k)$ is given by
\begin{eqnarray*}
p^\theta(h,i,j,k) &=& P\lleft(\matrix{ \mbox{$i$ master sequences are
selected,}
\cr
\mbox{$j$ master sequences do not mutate,}
\cr
\mbox{$k-j$ nonmaster sequences}
\cr
\mbox{mutate into a master sequence}} \left|\rule{0pt}{25pt}
Z^\theta_n=h \right.\rright)
\\
&=&
\pmatrix{ {m}
\cr
{i}} \frac{
(\sigma h)^i
(m-h)^{m-i}}{ ((\sigma-1)h+m )^m} \pmatrix{ {i}
\cr
{j}}
M_H(0,0)^j \bigl(1- M_H(0,0)
\bigr)^{i-j}
\\
&&{}\times\pmatrix{ {m-i}
\cr
{k-j}} M_H(
\theta,0)^{k-j} \bigl(1- M_H(\theta,0)
\bigr)^{m-i-k+j}.
\end{eqnarray*}
The
Markov chain
$(Z^\theta_n)_{n\geq0}$ corresponds to the evolution of the
number of master sequences in a Wright--Fisher model with two types,
the master type having fitness~$\sigma$
and the other type having fitness~$1$, and with the following
mutation matrix between the two types:
\begin{eqnarray*}
 P(\mbox{the master type mutates into the nonmaster type}) &=&
1-M_H(0,0),
\\
 P(\mbox{the nonmaster type mutates into
the master type}) &=& M_H(\theta,0).
\end{eqnarray*}
We can also realize
the Markov chain
$(Z^\theta_n)_{n\geq0}$ on our common probability space.
We define two maps
$\Xi^\ell, \Xi^1\dvtx \{ 0,\ldots,m \}\to{\mathcal P}^m_{\ell+1}$ by
setting
\begin{eqnarray*}
\forall i\in\{ 0,\ldots,m \}\qquad \Xi^\ell(i) &=& (i,0,\ldots,0,m-i),
\\
\Xi^1(i) &=& (i,m-i,0,\ldots,0).
\end{eqnarray*}
%
Let $i\in\{ 0,\ldots,m \}$ be the starting point of the process.
We set
$Z_0^\theta=i$ and
\[
\forall n\geq1\qquad Z^\theta_n = \Psi_O^{\theta}
\bigl(\Xi^\theta\bigl(Z^\theta_{n-1}\bigr),
R_n \bigr) (0). %
\]
This construction yields
a
Markov chain
$(Z^\theta_n)_{n\geq0}$ starting from $i$
with the adequate transition matrix.
Moreover the maps
$\Xi^\ell$, $\Xi^1$ are nondecreasing.
By Lemma~\ref{monophio}, the map
$\Psi_O$ is also nondecreasing with respect to the occupancy distribution.
We conclude that
the above coupling is monotone, and the
Markov chain
$(Z^\theta_n)_{n\geq0}$ is monotone.

\subsection{Invariant probability measures}\label{bounds}
Our goal is to estimate the law $\nu$ of the
fraction of the master sequence in the population
at equilibrium.
The probability measure $\nu$ is the probability measure on the
interval $[0,1]$
satisfying the following identities.
For any function $f\dvtx [0,1]\to{\mathbb R}$,
\[
\int_{[0,1]}f \,d\nu= \lim_{n\to\infty}
E \biggl( f \biggl( \frac{1}{m} N(X_n) \biggr)
\biggr) = 
\sum_{x\in({\mathcal A}^\ell)^m} 
f \biggl(
\frac{1}{m} N(x) \biggr) \mu(x),
\]
where $\mu$ is the invariant probability measure of the Markov chain
$(X_n)_{n\geq0}$.
In fact, the probability measure $\nu$
is the image of $\mu$
through the map
\[
x\in\bigl({\mathcal A}^\ell\bigr)^m\mapsto
\frac{1}{m} N(x)\in[0,1].
\]
We denote by
$\mu_O^\ell\index{$\mu^\ell_O$}$,
$\mu_O \index{$\mu_O$}$,
$\mu^1_O \index{$\mu^1_O$}$
the invariant probability measures of the Markov chains
$(O^\ell_n)_{n\geq0}$,
$(O_n)_{n\geq0}$,
$(O^1_n)_{n\geq0}$.
The probability $\nu$
is also the image of $\mu_O$
through the map
\[
o\in{\mathcal P}^m_{\ell+1}\mapsto\frac{1}{m} o(0)
\in[0,1].
\]
Thus,
for any function $f\dvtx [0,1]\to{\mathbb R}$,
\[
\int_{[0,1]}f \,d\nu= \sum_{o\in{\mathcal P}^m_{\ell+1}} f
\biggl(\frac{
o(0)
}{m} \biggr) \mu_O(o) = \lim
_{n\to\infty} E \biggl( f \biggl( \frac{1}{m}
O_n(0) \biggr) \biggr). %
\]
%
We fix now a nondecreasing function
$f\dvtx [0,1]\to{\mathbb R}$ such that $f(0)=0$.
Proposition~\ref{domiji} yields the inequalities
\[
\forall n\geq0\qquad f \biggl( \frac{1}{m} O^\ell_n(0)
\biggr) \leq f \biggl( \frac{1}{m} O_n(0) \biggr) \leq f
\biggl( \frac{1}{m} O^1_n(0) \biggr). %
\]
Taking the expectation and sending $n$ to $\infty$, we get
\[
\sum_{o\in{\mathcal P}^m_{\ell+1}} f \biggl( \frac{
o(0)
}{m}
\biggr) \mu_O^\ell(o) 
\leq\int
_{[0,1]}f \,d\nu
\leq
\sum_{o\in{\mathcal P}^m_{\ell+1}} f \biggl( \frac{ o(0) }{m} \biggr)
\mu_O^1(o). %
\]
%
We seek next estimates on the above sums.
The strategy is the same for the lower and the upper sum.
Thus we fix $\theta$ to be either $1$ or~$\ell$, and
we study
the invariant probability measure $\mu^\theta_O$.
For the
Markov chain
$(O^\theta_n)_{n\geq0}$,
the states of ${\mathcal T}^\theta$ are
transient,
while the populations in
$ {{\mathcal N}\cup({\mathcal W}^*\setminus{\mathcal T}^\theta
)}$
form a recurrent class.
Let $o^\theta_{\mathrm{exit}}$
be the occupancy distribution having
$m$ chromosomes in the Hamming class $\theta$,
\[
\forall l\in\{ 0,\ldots,\ell\} \qquad o^\theta_{\mathrm{exit}}(l)
= \cases{ m, &\quad if $l=\theta$,
\cr
0, &\quad otherwise.}\index{$o^\theta_{\mathrm{exit}}$}
\]
%
The
process
$(O^\theta_n)_{n\geq0}$
always exits
${\mathcal W}^*\setminus{\mathcal T}^\theta$ at
$o^\theta_{\mathrm{exit}}$.
This allows us to
estimate the invariant measure with the help of
the following renewal result.
%

\begin{proposition}\label{renewal}
Let
$(X_n)_{n\geq0}$ be a discrete time Markov chain with
values in a finite state space ${\mathcal E}$ which is irreducible and
aperiodic.
Let $\mu$ be the invariant probability
measure of the Markov chain
$(X_n)_{n\geq0}$.
Let ${\mathcal W}^*$ be a subset of ${\mathcal E}$, and let $e$ be a
point of
${\mathcal E}
\setminus{\mathcal W}^*$.
Let $f$ be a map from ${\mathcal E}$ to ${\mathbb R}$ which vanishes on
${\mathcal E}\setminus
{\mathcal W}^*$.
Let
\[
\tau^* = \inf\bigl\{ n\geq0\dvtx  X_n\in{\mathcal W}^* 
\bigr\},\qquad\tau= \inf\bigl\{ n\geq\tau^*\dvtx  X_n=e 
\bigr\}. %
\]
We have
\[
\sum_{x\in{\mathcal E}} f(x) \mu(x) = \frac{1}{E(\tau| X_0=e)} E
\Biggl(\sum_{n=\tau^*}^{\tau} f(X_n) \bigg|
X_0=e \Biggr).
\]
\end{proposition}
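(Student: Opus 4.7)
The plan is to combine the ergodic theorem for the finite irreducible aperiodic Markov chain $(X_n)_{n\geq 0}$ with a renewal decomposition of the trajectory into i.i.d.\ cycles delimited by the random times appearing in the statement.

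First, I would recall that since $\cE$ is finite and the chain is irreducible and aperiodic, the ergodic theorem gives, starting from $X_0 = e$ and $P_e$-almost surely,
$$\lim_{N\to\infty}\frac{1}{N}\sum_{n=0}^{N-1} f(X_n)\,=\,\sum_{x\in\cE}f(x)\,\mu(x)\,.$$
The strategy is then to rewrite the left-hand side using a cycle decomposition so that the limit equals the ratio on the right-hand side of the proposition.

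Next, I would define inductively a sequence of stopping times $R_0 = 0$ and, for $k \geq 1$,
$$\sigma_k\,=\,\inf\sgraffa{n > R_{k-1} : X_n \in \cW}\,,\qquad R_k\,=\,\inf\sgraffa{n \geq \sigma_k : X_n = e}\,.$$
Thus $R_1 = \tau$ and $\sigma_1 = \tau^*$ under $P_e$. Since the chain is finite and irreducible, $P_e(\tau < \infty) = 1$ and $E_e[\tau] < \infty$. By the strong Markov property applied at the times $R_k$ (at which $X_{R_k} = e$), the pairs
$$\Big(R_k - R_{k-1},\ \sum_{n=\sigma_k}^{R_k} f(X_n)\Big),\qquad k \geq 1,$$
are i.i.d., distributed respectively as $\tau$ and $\sum_{n=\tau^*}^{\tau} f(X_n)$ under $P_e$. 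Crucially, because $f$ vanishes on $\cE\setminus\cW$ and the only times within $[R_{k-1}, R_k)$ at which the chain lies in $\cW$ are in $[\sigma_k, R_k)$, we have
$$\sum_{n=R_{k-1}}^{R_k-1} f(X_n)\,=\,\sum_{n=\sigma_k}^{R_k-1} f(X_n)\,=\,\sum_{n=\sigma_k}^{R_k} f(X_n)\,,$$
the last equality because $X_{R_k} = e \notin \cW$.

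Now I would apply the strong law of large numbers to both sequences: $R_K/K \to E_e[\tau]$ and
$$\frac{1}{K}\sum_{k=1}^{K}\sum_{n=R_{k-1}}^{R_k-1} f(X_n)\,\longrightarrow\,E_e\Big(\sum_{n=\tau^*}^{\tau} f(X_n)\Big)$$
$P_e$-almost surely. Combining, along the subsequence $N = R_K$,
$$\frac{1}{R_K}\sum_{n=0}^{R_K - 1} f(X_n)\,\longrightarrow\,\frac{E_e\big(\sum_{n=\tau^*}^{\tau} f(X_n)\big)}{E_e(\tau)}\,.$$
Comparison with the ergodic limit along this subsequence yields the claimed identity.

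The only mildly delicate step is to handle the boundary: for general $N$, writing $K_N = \max\{k : R_k \leq N\}$, one sandwiches $\sum_{n=0}^{N-1} f(X_n)$ between the cycle sums up to $K_N$ and $K_N + 1$, uses boundedness of $f$ on the finite state space, and uses $K_N/N \to 1/E_e(\tau)$. This is routine, so I would present it succinctly; the substantive point is the i.i.d.\ structure of the cycles and the identification of the per-cycle $f$-sum with $\sum_{n=\tau^*}^{\tau} f(X_n)$.
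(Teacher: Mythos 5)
Your argument is correct, and it is a legitimate proof of the proposition. Note, however, that the paper does not actually supply its own proof here: it states the proposition and then says ``This result is proved in detail in \cite{CE}'', so there is nothing in this document to compare you against directly. Your route (ergodic theorem plus a strong-law argument over the i.i.d.\ cycles $[R_{k-1},R_k)$) is sound; the identification of the per-cycle sum $\sum_{n=R_{k-1}}^{R_k-1} f(X_n)$ with $\sum_{n=\sigma_k}^{R_k} f(X_n)$ using that $f$ vanishes off $\cW$ and that $X_{R_k}=e\notin\cW$ is exactly the right observation, and the boundary control via $K_N/N\to 1/E_e(\tau)$ and boundedness of $f$ is fine on a finite state space. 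A somewhat lighter and more self-contained alternative, which is the standard ``cycle'' or Kac-type argument and is likely closer to what \cite{CE} does, is to introduce the occupation measure $\pi(x)=E_e\bigl(\sum_{n=0}^{\tau-1}\mathbf{1}\{X_n=x\}\bigr)$, verify directly that $\pi$ is invariant for the transition kernel (this uses only the strong Markov property and the fact that $X_\tau=e=X_0$, via $\sum_x \pi(x)Pg(x)=E_e\bigl(\sum_{n=1}^{\tau} g(X_n)\bigr)=E_e\bigl(\sum_{n=0}^{\tau-1} g(X_n)\bigr)$), deduce $\mu=\pi/E_e(\tau)$ by uniqueness of the invariant measure, and then conclude since $\sum_x f(x)\pi(x)=E_e\bigl(\sum_{n=0}^{\tau-1}f(X_n)\bigr)=E_e\bigl(\sum_{n=\tau^*}^{\tau}f(X_n)\bigr)$. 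That version avoids invoking the ergodic theorem and the SLLN altogether; what your version buys is that it reuses off-the-shelf limit theorems rather than proving invariance of $\pi$ by hand. Either way the proof is complete.
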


This\vspace*{1pt} result is proved in detail in \cite{CE}.
We apply the renewal result of Proposition~\ref{renewal}
to the\vspace*{1pt} process
$(O^\theta_n)_{n\geq0}$ restricted to
$ {{\mathcal N}\cup({\mathcal W}^*\setminus{\mathcal T}^\theta
)}$,
the set ${\mathcal W}^*\setminus{\mathcal T}^\theta$,
the occupancy distribution $o^\theta_{\mathrm{exit}}$
and
the function $o\mapsto f(o(0)/m)$.
Setting
\[
\index{$\tau^*$} \tau^* = \inf\bigl\{ n\geq0\dvtx  O^\theta_n
\in{\mathcal W}^* 
\bigr\},\qquad\tau= \inf\bigl\{ n\geq\tau^*\dvtx
O^\theta_n=o^\theta_{\mathrm{exit}} \bigr
\}, %
\]
we have
\[
\sum_{o\in{\mathcal P}^m_{\ell+1}} f \biggl(
\frac{ o(0) }{m} \biggr) \mu_O^\theta(o) =
\frac{E (\sum_{n=\tau^*}^{\tau}
f ( (O^\theta_n(0))/{m} )
|
O^\theta_0=o^\theta_{\mathrm{exit}}
)
}{
E (\tau|
O^\theta_0=o^\theta_{\mathrm{exit}}
)}. %
\]
Yet, whenever the process
$(O^\theta_n)_{n\geq0}$ is in ${\mathcal W}^*\setminus{\mathcal
T}^\theta$,
the dynamics of the number of master sequences
$(O^\theta_n(0))_{n\geq0}$
is the same as the dynamics of the Markov chain
$(Z^\theta_n)_{n\geq0}$ defined in Section~\ref{dynabound}.
Let $\tau_0$ be the hitting time of $0$, defined by
\[
\tau_0 = \inf\bigl\{ n\geq0\dvtx  Z^\theta_n=0
\bigr\}. \index{$\tau_0$}
\]
%
The process
$(O^\theta_n)_{n\geq0}$
always exits
${\mathcal W}^*\setminus{\mathcal T}^\theta$
at $o^\theta_{\mathrm{exit}}$. Therefore $\tau$ coincides with the
exit time of
${\mathcal W}^*\setminus{\mathcal T}^\theta$ after $\tau^*$.
Let $i\in\{ 1,\ldots,m \}$.
From the previous elements, we see that,
conditionally on the event
$ { \{
O^\theta_{\tau^*}(0)=i
\}}$,
the trajectory
$ (O^\theta_n(0), {\tau^*}\leq n \leq{\tau} )$
has the same
law as
the trajectory
$ (Z^\theta_n, 0\leq n\leq\tau_0 )$
starting from
$Z^\theta_0= i$,
whence
\begin{eqnarray*}
E \bigl(\tau-{\tau^*} | O^\theta_{\tau^*}(0)=i \bigr) & =& E
\bigl({\tau_0} | Z^\theta_0= i \bigr),
\\
E
\Biggl(\sum_{n=\tau^*}^{\tau} f \biggl(
\frac{O^\theta_n(0)}{m} \biggr) \Big| O^\theta_{\tau^*}(0)=i \Biggr) & =& E
\Biggl(\sum_{n=0}^{\tau_0} f \biggl(
\frac{Z^\theta_n}{m} \biggr) \Big| Z^\theta_0= i \Biggr).
\end{eqnarray*}
Conditioning with respect to
$O^\theta_{\tau^*}(0)$
and reporting in the formula for the invariant probability measure
$\mu^\theta_O$, we get
\begin{eqnarray*}
&& \sum_{o\in{\mathcal P}^m_{\ell+1}} f \biggl( \frac{ o(0) }{m} \biggr)
\mu_O^\theta(o)
\\
&&\qquad =
\frac{
\sum_{i=1}^m
E (\sum_{n=0}^{\tau_0}
f ({Z^\theta_n}/{m} ) |
Z^\theta_0= i )
P (
O^\theta_{\tau^*}(0)=i
|
O^\theta_0=o^\theta_{\mathrm{exit}}
)
}{
E (\tau^* |
O^\theta_0=o^\theta_{\mathrm{exit}}
)+
\sum_{i=1}^m
E ({\tau_0}
|
Z^\theta_0= i )
P (
O^\theta_{\tau^*}(0)=i
|
O^\theta_0=o^\theta_{\mathrm{exit}}
)
}.
\end{eqnarray*}
We\vspace*{1pt} must next estimate these expectations.
In Section~\ref{bide},
we deal with the terms involving the
Markov chain
$(Z^\theta_n)_{n\geq0}$.
In Section~\ref{disc},
we deal with the discovery time $\tau^*$.
\section{Approximating processes}\label{bide}
This section is devoted to the study of the dynamics of the
Markov chains
$(Z^\ell_n)_{n\geq0}$ and
$(Z^1_n)_{n\geq0}$.
The estimates
are carried out exactly in the same way for both Markov chains.
As we said before,
the
Markov chain
$(Z^\theta_n)_{n\geq0}$ corresponds to the evolution of the
number of master sequences in a Wright--Fisher model with two types.
Throughout the section, we fix $\theta=1$ or $\theta=\ell$, and
we remove $\theta$ from the notation in most places, writing simply
$p,Z_n$ instead of
$p^\theta,
Z^\theta_n$.

\medskip\textit{Asymptotic regime.}
We shall derive estimates in the regime where
\[
\ell\to+\infty,\qquad m\to+\infty,\qquad q\to0, \qquad{\ell q} \to
a\in\,]0,+
\infty[.
\]
Several inequalities will be valid only when the parameters
are sufficiently close to their limits.
We will say that a property holds
asymptotically to express that it holds for
$\ell,m$ large enough, $q$ small enough and $\ell q$ close
enough to~$a$.

\subsection{Large deviations for the transition matrix}
For $p\in[0,1]$ and $t\geq0$, we define
\[
I(p,t) = \cases{ \displaystyle t\ln\frac{t}{p}+(1-t)\ln
\frac{1-t}{1-p}, &\quad$0<p<1$, $0\leq t\leq1$,
\vspace*{3pt}\cr
0, &\quad$t=p=0$ or
$t=p=1$,
\vspace*{3pt}\cr
+\infty, &\quad$\bigl(p\in\{0,1\}, t\neq p\bigr)$ or $t>1$.}
\]
The function $I(p,\cdot)$ is the rate function governing the large deviations
of the binomial distribution ${\mathcal B}(n,p)$ with parameters $n$
and $p$. We recall a basic estimate for the binomial coefficients.

%
\begin{lemma}
\label{cnk}
For any $n\geq1$,
any $k\in\{ 0,\ldots, n \}$, we have
\[
\biggl| 
\ln\frac{n!}{ k!(n-k)!} 
+ 
k\ln
\frac{k}{ n} 
+(n-k)\ln\frac{n-k}{ n} \biggr| 
\leq2\ln
n+3.
\]
\end{lemma}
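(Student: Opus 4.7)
The plan is to recognize that the quantity inside the absolute value is, up to sign, the deviation of $\ln\binom{n}{k}$ from its leading Stirling asymptotics. A direct computation gives
$$k\ln\frac{k}{n}+(n-k)\ln\frac{n-k}{n}\,=\,k\ln k+(n-k)\ln(n-k)-n\ln n\,,$$
so the inequality to prove is equivalent to
$$\Big|\ln\binom{n}{k}-\big(n\ln n-k\ln k-(n-k)\ln(n-k)\big)\Big|\,\leq\,2\ln n+3\,.$$
The boundary cases $k=0$ and $k=n$ are trivial, since $\binom{n}{k}=1$ and $0\ln 0=0$ by convention, so both sides vanish. From here on I assume $1\leq k\leq n-1$.

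The main work is then to obtain a two-sided Stirling-type estimate of $\ln n!$ sharp enough to control the three factorials appearing in $\binom{n}{k}=n!/(k!(n-k)!)$. My preferred route is the elementary integral comparison: since $\ln$ is increasing, for every $j\geq 1$,
$$\int_{j-1}^{j}\ln x\,dx\,\leq\,\ln j\,\leq\,\int_{j}^{j+1}\ln x\,dx\,,$$
and summing yields
$$n\ln n-n+1\,\leq\,\ln n!\,\leq\,(n+1)\ln(n+1)-n\,.$$
Using $\ln(n+1)\leq\ln n+1$ (because $\ln(1+1/n)\leq 1/n\leq 1$ for $n\geq 1$), the upper bound is rewritten as
$$\ln n!\,\leq\,n\ln n-n+\ln n+2\,.$$

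Applying this double bound to each of $n!$, $k!$ and $(n-k)!$, the identity $n=k+(n-k)$ forces the linear terms $-n,-k,-(n-k)$ to cancel, and the leading entropy combination $n\ln n-k\ln k-(n-k)\ln(n-k)$ reconstructs cleanly on both sides. The residual error terms are: on top, $\ln n+2-1-1=\ln n$, and at the bottom, $1-2-2-\ln k-\ln(n-k)=-3-\ln k-\ln(n-k)\geq -3-2\ln n$ (using $k(n-k)\leq n^{2}$, hence $\ln k+\ln(n-k)\leq 2\ln n$). This yields
$$-(2\ln n+3)\,\leq\,\ln\binom{n}{k}-\big(n\ln n-k\ln k-(n-k)\ln(n-k)\big)\,\leq\,\ln n\,\leq\,2\ln n+3\,,$$
which is exactly the stated inequality. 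There is no real obstacle here; the only care needed is to track the additive constants tightly enough to land inside $2\ln n+3$, and to correctly handle the $0\ln 0$ convention at the boundary.
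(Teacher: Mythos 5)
Your proof is correct and takes essentially the same route as the paper's: both compare $\ln n!=\sum_{k\leq n}\ln k$ to $\int_1^n\ln x\,dx$ to get the two-sided Stirling bound $1\leq\ln n!-n\ln n+n\leq\ln n+2$, apply it to $n!$, $k!$ and $(n-k)!$, let the linear terms cancel, and bound the leftover by $\ln n$ above and $-3-2\ln n$ below via $\ln k+\ln(n-k)\leq 2\ln n$. The only cosmetic difference is that the paper names the error term $\phi(n)=\ln n!-n\ln n+n$ while you carry the inequalities directly; also note that the lower bound $\ln n!\geq n\ln n-n+1$ comes from summing $\int_{j-1}^j\ln x\,dx\leq\ln j$ over $j=2,\dots,n$ (with $\ln 1=0$), not from $j=1$ as your phrasing suggests, and obtaining $\ln n!\leq n\ln n-n+\ln n+2$ requires the sharper $\ln(n+1)\leq\ln n+1/n$ for the $n\ln(n+1)$ term rather than just $\ln(n+1)\leq\ln n+1$ — but both are trivial fixes and do not affect the conclusion.
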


\begin{pf}
The proof is standard; see, for instance, \cite{EL}.
Setting, for $n\in\mathbb N$, $\phi(n)=\ln n!-n\ln n+n$,
we have
%
\begin{eqnarray*}
\ln\frac{n!}{ k!(n-k)!} &=& \ln n!-\ln k! -\ln(n-k)!
\\
&=& n\ln n-n+\phi(n) -
\bigl(k\ln k-k+\phi(k) \bigr)
\\
&&{} - \bigl((n-k)\ln(n-k)-(n-k)+\phi(n-k)
\bigr)
\\
&=& -{k}\ln\frac{k}{n} -{(n-k)}\ln\frac{n-k}{n} +\phi(n)-\phi(k)-
\phi(n-k).
\end{eqnarray*}
Comparing the discrete sum
$\ln n!=\sum_{1\leq k\leq n}\ln k$
to the integral
$\int_1^n\ln x \,dx$,
we see that
$1\leq\phi(n)\leq\ln n +2$ for all $n\geq1$. On one hand,
%
\[
\phi(n)-\phi(k)-\phi(n-k) \leq\ln n;
\]
on the other hand,
\[
\phi(n)-\phi(k)-\phi(n-k) \geq1- (\ln k+2) -\bigl(\ln(n-k)+2\bigr)
\geq-3-2\ln
n,
\]
%
and we have the desired inequalities.
\end{pf}

We define a function $f\dvtx [0,1]\to[0,1]$ by
\[
f(r) = \frac{\sigma r}{(\sigma-1)r+1} 
\]
and a function $I_\ell\dvtx [0,1]^4\to[0,+\infty]$ by
\[
I_\ell(r,s,\beta,t) = I \bigl(f(r),s \bigr)+ s I \biggl(M_H(0,0),
\frac{\beta}{s} \biggr)+ (1-s) I \biggl(M_H(\theta,0),
\frac{t-\beta}{1-s} \biggr).
\]
The function $I_\ell$
depends on $\ell$ through the mutation probabilities
$M_H(0,0)$ and
$M_H(\theta,0)$.
Using Lemma~\ref{cnk}
and the expression of $p^\theta$, we see that
\begin{eqnarray*}
&& \forall h,i,j,k\in\{ 0,\ldots,m \}
\\[-2pt]
&&\qquad \ln p(h,i,j,k) = -mI \biggl( f
\biggl(\frac{h}{m} \biggr), \frac{i}{m} \biggr) -iI \biggl(
M_H(0,0), \frac{j}{i} \biggr)
\\[-1pt]
&&\phantom{\qquad \ln p(h,i,j,k) =}{} -(m-i)I \biggl( M_H(\theta,0), \frac{k-j}{m-i} \biggr) +
\Phi(h,i,j,k,m)
\\[-1pt]
&&\phantom{\qquad \ln p(h,i,j,k)} = -mI_\ell\biggl( \frac{h}{m},
\frac{i}{m}, \frac{j}{m}, \frac{k}{m} \biggr)+\Phi(h,i,j,k,m),
\end{eqnarray*}
where the error term $\Phi(h,i,j,k,m)$ satisfies
\[
\forall h,i,j,k\in\{ 0,\ldots,m \}\qquad\bigl|\Phi(h,i,j,k,m) \bigr| \leq6\ln m+9.
\]
%
In the asymptotic regime,
for $\theta=1$ or $\theta=\ell$,
we have
$M_H(0,0)\to e^{-a}$,
$M_H(\theta,0)\to0$,
so that,
for $r,s,\beta,t\in[0,1]^4$,
\[
I_\ell(r,s,\beta,t) \to\cases{ I(r,s,t), &\quad if $\beta= t$,
\cr
+
\infty, &\quad if $\beta\neq t$,} %
\]
where the function
$I(r,s,t)$ is given by
\[
\forall r,s,t\in[0,1]^3\qquad I(r,s,t) = I \bigl(f(r),s \bigr)+ s I
\biggl(e^{-a}, \frac{t}{s} \biggr).
\]
%

\begin{proposition}\label{p1pgd}
We define a function $V_1$ on $[0,1]\times[0,1]$ by
\[
\forall r,t\in[0,1]\qquad V_1(r,t) = \inf\bigl\{ I (r,s,t )\dvtx  s
\in[0,1] \bigr\}. %
\]
The one step transition probabilities of
$(Z_n)_{n\geq0}$
satisfy the large deviation principle governed by $V_1$:
for $r\in[0,1]$ and
any subset $U$ of $[0,1]$,
we have,
for any $n\geq0$,
\begin{eqnarray*}
&& -\inf\bigl\{ V_1(r,t)\dvtx  t\in\uro\bigr\}
\\
&&\qquad \leq\mathop{\liminf
_{\ell,m\to\infty, q\to0}}_{{\ell q} \to a } \frac{1}{m}\ln P
\bigl(Z_{n+1}\in mU | Z_n= {\lfloor rm\rfloor} \bigr),
\\
&& \mathop{\limsup_{\ell,m\to\infty, q\to0}}_{{\ell q} \to a } \frac
{1}{m}\ln P
\bigl(Z_{n+1}\in mU | Z_n= {\lfloor rm\rfloor} \bigr)
 \leq
-\inf\bigl\{ V_1(r,t)\dvtx  t\in\overline{U} \bigr\}.
\end{eqnarray*}
\end{proposition}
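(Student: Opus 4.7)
The plan is to apply a classical Cram\'er-type argument to the decomposition
$p^\theta(h,k)=\sum_{i,j}p^\theta(h,i,j,k)$, using the estimate already derived
in this section:
$$\ln p^\theta(h,i,j,k)\,=\,-m\,I_\ell\Big(\frac{h}{m},\frac{i}{m},\frac{j}{m},\frac{k}{m}\Big)+\Phi\,,\qquad |\Phi|\leq 6\ln m+9\,.$$
The essential preliminary observation is the pointwise limit
$I_\ell(r,s,\beta,t)\to I(r,s,t)$ when $\beta=t$, and
$I_\ell(r,s,\beta,t)\to+\infty$ otherwise. This follows from
$M_H(0,0)\to e^{-a}$ and $M_H(\theta,0)\to 0$: the third term
$(1-s)\,I\big(M_H(\theta,0),(t-\beta)/(1-s)\big)$ diverges as soon as
$t\neq\beta$, because $I(0,u)=+\infty$ for $u>0$ and $I(p,u)=+\infty$ for
$u<0$. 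Consequently, the infimum of $I_\ell(r,\cdot,\cdot,t)$ converges
to $V_1(r,t)$ for each $t$.

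For the upper bound, I would dominate the probability by counting:
$$P\big(Z_{n+1}\in mU\,\big|\,Z_n=\lfloor rm\rfloor\big)\,\leq\,(m+1)^3\,\max\,\Big\{\,p^\theta(\lfloor rm\rfloor,i,j,k):0\leq j\leq i\leq m,\ k/m\in U\,\Big\}\,.$$
Taking $\frac{1}{m}\log$, the prefactor contributes $O(m^{-1}\ln m)\to 0$,
reducing the problem to an analysis of the maximum. Along any subsequence
of discrete near-maximizers the triple $(i_m/m,j_m/m,k_m/m)$ has a cluster
point $(s^{\star},\beta^{\star},t^{\star})$ with $t^{\star}\in\overline{U}$.
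If $\beta^{\star}\neq t^{\star}$, the values of $I_\ell$ tend to $+\infty$
along the subsequence, which is more than enough. If $\beta^{\star}=t^{\star}$,
the liminf of these values is at least $I(r,s^{\star},t^{\star})\geq
V_1(r,t^{\star})\geq\inf_{t\in\overline{U}}V_1(r,t)$, giving the desired bound.

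For the lower bound, I would exhibit a single term which already realizes
the right exponential order. Fix $t\in U^{o}$ with $V_1(r,t)<+\infty$
(otherwise the bound is trivial) and pick $s^{\star}\in[t,1]$ attaining the
infimum defining $V_1(r,t)$; note that $s^{\star}\geq t$ is automatic,
since $I(e^{-a},t/s)=+\infty$ when $s<t$. For $m$ large enough,
$k_m:=\lfloor tm\rfloor$ belongs to $mU$, and choosing
$i=\lfloor s^{\star}m\rfloor$, $j=k=k_m$ yields
$$P\big(Z_{n+1}\in mU\,\big|\,Z_n=\lfloor rm\rfloor\big)\,\geq\,p^\theta\big(\lfloor rm\rfloor,\lfloor s^{\star}m\rfloor,k_m,k_m\big)\,.$$
Since $I_\ell(r,s^{\star},t,t)\to I(f(r),s^{\star})+s^{\star}\,I(e^{-a},t/s^{\star})=V_1(r,t)$,
the estimate gives $\liminf\frac{1}{m}\ln P\geq -V_1(r,t)$. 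Taking the
supremum over $t\in U^{o}$ yields the announced inequality.

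The main obstacle is precisely the $\ell$-dependence of the rate function
$I_\ell$ and its degeneracy in the limit: unlike in the textbook Cram\'er
theorem, the transition law itself varies with the parameter through
$M_H(0,0)$ and $M_H(\theta,0)$, and the limiting rate function is only
lower semicontinuous, with genuine infinite values on the hyperplane
$\{\beta\neq t\}$. The delicate point is therefore to control the upper
bound near this singular locus, which is handled by the blow-up argument
sketched above. Heuristically, the vanishing probability
$M_H(\theta,0)\to 0$ forces any non-negligible contribution to satisfy
$j\approx k$, which collapses the four-variable problem for $I_\ell$ onto
the three-variable one defining~$V_1$.
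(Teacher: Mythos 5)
Your proposal reproduces the paper's argument essentially verbatim: the same counting bound $(m+1)^3\max p^\theta(\ldots)$ with the subsequence/cluster-point analysis handling the collapse $\beta=t$ for the upper bound, and the same single-term lower bound via $p^\theta(\lfloor rm\rfloor,\lfloor sm\rfloor,\lfloor tm\rfloor,\lfloor tm\rfloor)$ followed by optimization. The only cosmetic difference is that you fix an optimal $s^\star$ before passing to the limit whereas the paper optimizes over $s,t$ afterward; the two are equivalent.
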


\begin{pf}
Let $r\in[0,1]$, and let $U$ be a subset of $[0,1]$.
For any $n\geq0$,
\begin{eqnarray*}
P \bigl(Z_{n+1}\in mU | Z_n= {\lfloor rm
\rfloor} \bigr) &=& \sum_{k\in mU\cap\{ 0,\ldots,m \}} p \bigl( {\lfloor rm
\rfloor},k \bigr)
\\
&=& \mathop{\sum_{k\in\{ 0,\ldots,m \}}}_{k\in mU }
\sum_{i=0}^{m} \sum
_{
j=0
}^i p \bigl( {\lfloor rm\rfloor},i,j,k \bigr).
\end{eqnarray*}
From the previous inequalities, we have
\begin{eqnarray*}
&& P \bigl(Z_{n+1}\in mU | Z_n= {\lfloor rm\rfloor} \bigr)
\\
&&\qquad \leq(m+1)^3\max\bigl\{ p \bigl( {\lfloor rm\rfloor},i,j,k
\bigr)\dvtx  0\leq i\leq m, 0\leq j\leq i, k\in mU \bigr\}
\\
&&\qquad \leq m^{11}
\exp\biggl( -m\min\biggl\{ I_\ell\biggl( \frac{\lfloor rm\rfloor}{
m},
\frac{i}{m}, \frac{j}{m}, \frac{k}{m} \biggr)\dvtx
0\leq j\leq i\leq m, 
{k}\in mU \biggr\}
\biggr).
\end{eqnarray*}
For each $m\geq1$, let $i_m,j_m,k_m$ be three integers in $\{
0,\ldots,m \}$
which realize the above minimum.
By compactness of $[0,1]$, up to the extraction of a subsequence, we can
suppose that, as $m$ goes to $\infty$,
$ {i_m}/{m}\to s$,
${j_m}/{m}\to\beta$,
${k_m}/{m}\to t$.
If $\beta< t$, then
\begin{eqnarray*}
&& \mathop{\limsup_{\ell,m\to\infty,
q\to0}}_{{{\ell q} \to a
}} -I_\ell
\biggl( \frac{\lfloor rm\rfloor}{
m}, \frac{i_m}{m}, \frac{j_m}{m},
\frac{k_m}{m} \biggr)
\\
&&\qquad \leq
\mathop{\limsup_{\ell,m\to\infty,
q\to0
}}_{{{\ell q} \to a
}
}
- \biggl(1- \frac{i_m}{m} \biggr) I \biggl(M_H(\theta,0),
\frac{({k_m}/{m})-({j_m}/{m})
}{1-({i_m}/{m})} \biggr) 
=
-\infty
\end{eqnarray*}
because
\[
\mathop{\limsup_{\ell,m\to\infty,
q\to0
}}_{{\ell q} \to a
} - \frac{k_m-j_m}{m}
\ln\frac{{(k_m-j_m)/{m}}
}{
(
1-({i_m}/{m})
)
M_H(\theta,0)
} = -\infty. %
\]
Thus we need only to consider the case where $\beta=t$. We have then
%
\[
\mathop{\limsup_{\ell,m\to\infty,
q\to0
}}_{{\ell q} \to a
} -I_\ell
\biggl( \frac{\lfloor rm\rfloor}{
m}, \frac{i_m}{m}, \frac{j_m}{m},
\frac{k_m}{m} \biggr) 
\leq-I \bigl(f(r),s \bigr)- s I
\biggl(e^{-a}, \frac{t}{s} \biggr).
\]
%
This implies the large deviation upper bound
%
\begin{eqnarray*}
\hspace*{-3pt}&&\mathop{\limsup_{\ell,m\to\infty,
q\to0
}}_{{{\ell q} \to a
}
} \frac{1}{m}\ln P
\bigl(Z_{n+1}\in mU | Z_n= {\lfloor rm\rfloor} \bigr)
\leq-\inf\bigl\{ 
I (r,s,t )\dvtx s\in[0,1], t\in\overline{U} \bigr\}.
\end{eqnarray*}
%
Conversely, let $s,t\in[0,1]$. We have
%
\begin{eqnarray*}
&& P \bigl(Z_{n+1}= {\lfloor tm\rfloor} | Z_n= {\lfloor rm
\rfloor} \bigr)
\\
&&\qquad \geq p \bigl( {\lfloor rm\rfloor}, {\lfloor sm\rfloor},
{\lfloor
tm\rfloor}, {\lfloor tm\rfloor} \bigr)
\\
&&\qquad \geq\frac{1}{m^7} \exp\biggl( -m
I_\ell\biggl( \frac{\lfloor rm\rfloor}{m}, \frac{\lfloor sm\rfloor}{m},
\frac{\lfloor tm\rfloor}{m}, \frac{\lfloor tm\rfloor}{m} \biggr) \biggr)
\\
&&\qquad \geq\frac{1}{m^7}
\exp\biggl( -m I \biggl( f \biggl(\frac{\lfloor rm\rfloor}{m} \biggr),
\frac{\lfloor sm\rfloor}{m}
\biggr) - {\lfloor sm\rfloor} I \biggl(M_H(0,0), \frac{\lfloor tm\rfloor}{
\lfloor sm\rfloor}
\biggr)
\\
&&\hspace*{174pt}{}- \bigl(m-{\lfloor sm\rfloor} \bigr) \ln\frac{1}{1-M_H(\theta,0)} \biggr).
\end{eqnarray*}
Taking $\ln$ and sending $m,\ell$ to $\infty$, we obtain
\[
\mathop{\liminf_{\ell,m\to\infty,
q\to0
}}_{{\ell q} \to a
} \frac{1}{m}\ln P
\bigl(Z_{n+1}= {\lfloor tm\rfloor} | Z_n= {\lfloor rm
\rfloor} \bigr) \geq- I (r,s,t ). %
\]
Suppose now that $t$ belongs to $\uro$, the interior of $U$.
For $m$ large enough, the integer
${\lfloor tm\rfloor}$ belongs to $mU$.
From the previous estimate, we have
\[
\mathop{\liminf_{\ell,m\to\infty,
q\to0
}}_{{\ell q} \to a
} \frac{1}{m}\ln P
\bigl(Z_{n+1}\in mU | Z_n= {\lfloor rm\rfloor} \bigr) \geq
- I (r,s,t ).
\]
Optimizing over $s,t$, we get
the large deviation lower bound
\begin{eqnarray*}
\hspace*{-4pt}&& \mathop{\liminf_{\ell,m\to\infty,
q\to0
}}_{{\ell q} \to a
} \frac{1}{m}\ln P
\bigl(Z_{n+1}\in mU | Z_n= {\lfloor rm\rfloor} \bigr)
\geq-\inf\bigl\{ 
I (r,s,t )\dvtx s\in[0,1], t\in{\uro} \bigr\}.
\end{eqnarray*}
This finishes the proof of
the large deviation principle.
\end{pf}

Proceeding in the same way, we can prove that the
$l$-step transition
probabilities satisfy a large deviation principle.
For $l\geq1$, we define a function $V_l$ on $[0,1]\times[0,1]$ by
\begin{eqnarray*}
&&V_l(r,t) = \inf\Biggl\{ {\sum_{k=0}^{l-1}}
I (\rho_k,\gamma_k,\rho_{k+1} )\dvtx
\rho_0=r, \rho_{l}=t,
\rho_k,
\gamma_k\in[0,1]\mbox{ for }0\leq k<l \Biggr\}.
\end{eqnarray*}
%

\begin{corollary}\label{ppgd}
For $l\geq1$, the $l$-step transition probabilities of
$(Z_n)_{n\geq0}$ satisfy the large deviation principle governed by $V_l$:
for any subset $U$ of $[0,1]$, any $r\in[0,1]$, we have,
for any $n\geq0$,
\begin{eqnarray*}
&& -\inf\bigl\{ V_l(r,t)\dvtx  t\in\uro\bigr\}
\\
&&\qquad \leq \mathop{\liminf
_{\ell,m\to\infty, q\to0}}_
{{\ell q} \to a } \frac{1}{m}\ln P
\bigl(Z_{n+l}\in mU | Z_n= {\lfloor rm\rfloor} \bigr),
\\
&& \mathop{\limsup_{\ell,m\to\infty, q\to0 }}_
{{\ell q} \to a } \frac{1}{m}\ln P
\bigl(Z_{n+l}\in mU | Z_n= {\lfloor rm\rfloor} \bigr) \leq
-\inf\bigl\{ V_l(r,t)\dvtx  t\in\overline{U} \bigr\}.
\end{eqnarray*}
\end{corollary}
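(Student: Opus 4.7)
The plan is to induct on $l$, bootstrapping from the one-step large deviation principle of proposition~\ref{p1pgd}. The base case $l=1$ coincides with proposition~\ref{p1pgd} by construction of $V_1$. For the induction step, apply the Chapman--Kolmogorov decomposition
$$P\big(Z_{n+l}\in mU\,\big|\,Z_n=\lfloor rm\rfloor\big) \,=\, \sum_{k=0}^{m} p\big(\lfloor rm\rfloor,k\big)\, P\big(Z_{n+l}\in mU\,\big|\,Z_{n+1}= k\big)$$
and combine the one-step estimate with the $(l-1)$-step estimate provided by the induction hypothesis.

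For the upper bound, cover $[0,1]$ with a finite grid of intervals of mesh $1/K$, so that the sum over $k$ is dominated by $(K+1)$ times the maximum over grid cells. Proposition~\ref{p1pgd} gives $p(\lfloor rm\rfloor,k)\leq \exp(-mV_1(r,\rho)+o(m))$ uniformly for $k/m$ near $\rho$, and the induction hypothesis gives an analogous bound with rate $\inf_{t\in\overline{U}}V_{l-1}(\rho,t)$ for the tail factor. Multiplying and taking the infimum over grid cells yields
$$\limsup_{m\to\infty}\frac{1}{m}\ln P\big(Z_{n+l}\in mU\,\big|\,Z_n=\lfloor rm\rfloor\big)\,\leq\,-\inf_{\rho\in [0,1]}\Big\{V_1(r,\rho)+\inf_{t\in\overline{U}}V_{l-1}(\rho,t)\Big\};$$
refining the grid and invoking lower semicontinuity of $V_1,V_{l-1}$ (inherited from $I$) identifies this double infimum with $\inf_{t\in\overline{U}}V_l(r,t)$. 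For the lower bound, given $t\in\uro$ and $\varepsilon>0$, pick a trajectory $r=\rho_0,\rho_1,\dots,\rho_l=t$ with intermediate parameters $\gamma_0,\dots,\gamma_{l-1}$ realizing $V_l(r,t)$ up to $\varepsilon$, and restrict to the event that $Z_{n+k}=\lfloor \rho_k m\rfloor$ for every $k=1,\dots,l$. The explicit estimate $\ln p(h,i,j,k)=-mI_\ell(h/m,i/m,j/m,k/m)+O(\ln m)$ derived in the proof of proposition~\ref{p1pgd} supplies a one-step lower bound along each leg of this trajectory, and multiplying the $l$ factors produces a lower bound of the form $\exp\big(-m\sum_{k=0}^{l-1}I(\rho_k,\gamma_k,\rho_{k+1})+o(m)\big)$. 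Optimizing over $\rho,\gamma$ and sending $\varepsilon\to 0$ yields the matching lower bound.

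The delicate point is the uniformity of the one-step bounds with respect to the starting position: proposition~\ref{p1pgd} is stated pointwise, yet in the Chapman--Kolmogorov step the one-step transition is evaluated at the random point $Z_{n+1}$. This is resolved by exploiting the sharp estimate $\ln p(h,i,j,k)=-mI_\ell(h/m,i/m,j/m,k/m)+O(\ln m)$, which holds for \emph{all} $h,i,j,k$ rather than just for a fixed starting point, together with the continuity of $(r,s,\beta,t)\mapsto I_\ell(r,s,\beta,t)$ on the compact subset where the limiting rate function $I(r,s,t)$ is finite. Outside this subset, namely when $\beta\neq t$, the transition probabilities decay faster than any exponential in $m$ (as observed in the proof of proposition~\ref{p1pgd}) and contribute negligibly to both sums. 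Once this uniformity is secured, the upper and lower bounds match and identify the large deviation rate for the $l$-step transitions with $V_l$.
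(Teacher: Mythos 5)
Your proposal is correct, and it reaches the same conclusion as the paper, but by a different route. The paper omits the proof, writing only ``proceeding in the same way,'' which signals that the intended argument is a \emph{direct} repetition of the computation in proposition~\ref{p1pgd}: write $P(Z_{n+l}\in mU\,|\,Z_n=\lfloor rm\rfloor)$ as a sum over full length-$l$ trajectories $(k_0,i_0,j_0,k_1,\ldots,k_l)$, bound the number of terms by a polynomial $m^{O(l)}$, apply the explicit estimate $\ln p(h,i,j,k)=-mI_\ell(h/m,i/m,j/m,k/m)+O(\ln m)$ to every factor at once, and extract a convergent subsequence of the maximising trajectory by compactness of $[0,1]^{3l}$ — exactly as in the $l=1$ case. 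This sidesteps any need for uniformity in the starting point, because one never freezes an intermediate step and invokes a lower-order LDP as a black box. You instead set up an induction via Chapman--Kolmogorov, which forces you to confront the fact that proposition~\ref{p1pgd} is stated pointwise in $r$ while the induction evaluates it at the random point $Z_{n+1}$. You correctly identify this as the delicate point, and your resolution — falling back on the explicit $\ln p$ estimate, which is genuinely uniform in $(h,i,j,k)$ — is sound, but notice that once you appeal to it the induction becomes cosmetic: unwinding it just reproduces the direct multi-step computation. One small caveat in your write-up: in the grid-covering step you bound $\inf_\rho V_1+\inf_\rho V_{l-1}$ from below by the joint $\inf_\rho(V_1+V_{l-1})$, and the passage from separate infima over a shrinking cell to the joint infimum at a single $\rho$ relies on lower semicontinuity of $V_1$ and $V_{l-1}$, which you cite but do not check; it does hold (inherited from the lower semicontinuity of $I$), so the argument closes, but a careful reader would want a sentence on it. If one prefers a genuinely modular induction, the clean statement to carry through is a marginally stronger hypothesis — the $(l-1)$-step LDP for \emph{any} integer sequence $h_m$ with $h_m/m\to\rho$ — which the explicit estimate and compactness deliver for free; stating it that way removes the hand-waving. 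Either way the conclusion is correct.
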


Let us examine when the rate function $I(r,s,t)$ vanishes.
We see that
\[
I (r,s,t ) = 0\quad\Longleftrightarrow\quad s=f(r),\qquad e^{-a}=
\frac{t}{s}.
\]
Let us define a function $F\dvtx [0,1]\to[0,1]$ by
\[
\forall r\in[0,1]\qquad F(r) = e^{-a}f(r) = \frac{\sigma
re^{-a}}{(\sigma-1)r+1}.
\]
The Markov chain
$(Z_n)_{n\geq0}$
can be considered as a random perturbation of the dynamical system
associated to the map $F$
\[
z_0\in[0,1],\ \forall n\geq1\qquad z_n =
F(z_{n-1}).
\]
%
Let us set
\[
\rho^*(a) = \cases{ \displaystyle\frac{\sigma e^{-a}-1}{\sigma-1},
&\quad if $\sigma
e^{-a}>1$,
\vspace*{3pt}\cr
0,&\quad if $\sigma e^{-a}\leq1$.}
\]
Since $F$ is nondecreasing, the sequence $(z_n)_{n\in\mathbb N}$ is monotonous
and it converges to a fixed point of $F$.
If $\sigma e^{-a}\leq1$,
the function $F$ admits only one fixed point, $0$, and
$(z_n)_{n\in\mathbb N}$ converges to $0$.
If $\sigma e^{-a}> 1$,
the function $F$ admits two fixed points,
$0$
and $\rho^*(a)$. If $z_0>0$, then
$(z_n)_{n\in\mathbb N}$ converges to $\rho^*(a)$.

The natural strategy
to study the Markov chain
$(Z_n)_{n\geq0}$
is to use the
Freidlin--Wentzell theory \cite{FW}.
The crucial quantity to analyze the dynamics is the following
cost function~$V$.
We define, for $s,t\in[0,1]$,
\begin{eqnarray*}
V(s,t) &=& \inf_{l\geq1} V_l(s,t)
\\
&=&
\inf_{l\geq1} \inf\Biggl\{ \sum_{k=0}^{l-1}
I (\rho_k,\gamma_k,\rho_{k+1} )\dvtx
\rho_0=s, \rho_{l}=t, \rho_k,
\gamma_k\in[0,1]
\\[-3pt]
&&\hspace*{191pt}\mbox{ for }0\leq k<l \Biggr\}.
\end{eqnarray*}

%
%
\begin{lemma}\label{vpro}
Suppose that $\sigma e^{-a}>1$.
For
$s,t\in[0,1]$, we have $V(s,t)=0$ if and only if:
\begin{itemize}
\item
either $s=t=0$,

\item
or there exists $l\geq1$ such that $t=F^l(s)$,

\item
or $s\neq0, t=\rho^*(a)$.
\end{itemize}
\end{lemma}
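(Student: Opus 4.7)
The plan is to prove both implications. The $(\Leftarrow)$ direction will proceed by exhibiting explicit almost-zero-cost paths, and the $(\Rightarrow)$ direction by a dichotomy argument combined with a tracking estimate that exploits the contracting behavior of $F$ near the stable fixed point $\rho^*(a)$.

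For the $(\Leftarrow)$ direction I will construct a low-cost path for each of the three conditions. If $s=t=0$, the one-step path $\rho_0=\rho_1=\gamma_0=0$ has zero cost since $I(f(0),0)=I(0,0)=0$ and the degenerate binomial term contributes $0$. If $t=F^l(s)$, the deterministic choice $\rho_k=F^k(s)$, $\gamma_k=f(\rho_k)$ makes every one-step cost vanish, because $I(f(\rho_k),f(\rho_k))=0$ and $F(\rho_k)/f(\rho_k)=e^{-a}$ gives $I(e^{-a},e^{-a})=0$. If $s\neq 0$ and $t=\rho^*(a)$, I follow the $F$-orbit for $l-1$ steps, reaching $F^{l-1}(s)$ (which converges to $\rho^*(a)$ by stability of the fixed point and $s\neq 0$), and append a single final step to $\rho^*(a)$ with $\gamma_{l-1}=f(F^{l-1}(s))$. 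The cost of this last step reduces to $f(F^{l-1}(s))\,I(e^{-a},\rho^*(a)/f(F^{l-1}(s)))$, which tends to $0$ by continuity of $I$ at the zero triple $(\rho^*(a),f(\rho^*(a)),\rho^*(a))$. Sending $l\to\infty$ yields $V(s,\rho^*(a))=0$.

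For the $(\Rightarrow)$ direction, assume $V(s,t)=0$ and choose admissible paths $\pi^n$ of length $l_n$ with total cost $\eta_n\to 0$. When $s=0$, the term $I(f(0),\gamma_0)=I(0,\gamma_0)$ is $+\infty$ unless $\gamma_0=0$, and then $\gamma_0\,I(e^{-a},\rho_1/\gamma_0)$ is $+\infty$ unless $\rho_1=0$; induction forces the whole path to remain at $0$, hence $t=0$. For $s\neq 0$ I split on the length sequence. If $(l_n)$ admits a bounded subsequence, of constant length $l$ say, then lower semicontinuity of $I$ together with compactness of $[0,1]^{2l+1}$ furnishes a minimizer attaining $V_l(s,t)=0$; every summand must vanish, so $\gamma_k=f(\rho_k)$ and $\rho_{k+1}=F(\rho_k)$ for each $k$, yielding $t=F^l(s)$.

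The main obstacle is the remaining case $l_n\to\infty$, where I must force $t=\rho^*(a)$. The key tool is a tracking estimate: a nondecreasing continuous modulus $\phi$ with $\phi(0)=0$ such that $V_1(\rho,\rho')\leq\eta$ implies $|\rho'-F(\rho)|\leq\phi(\eta)$; this follows by compactness and lower semicontinuity from the fact (already used in sufficiency) that $V_1(\rho,\rho')=0$ iff $\rho'=F(\rho)$. A naive step-by-step iteration fails near $0$, where $F'(0)=\sigma e^{-a}>1$ amplifies deviations. The remedy exploits that $s$ is fixed and that $F^k(s)$ reaches a neighborhood of $\rho^*(a)$ in finitely many steps: fix $\delta>0$ so that $F$ is a strict $\lambda$-contraction on $B(\rho^*(a),2\delta)$ (possible since $|F'(\rho^*(a))|=e^a/\sigma<1$), and pick $K=K(s,\delta)$ with $F^K(s)\in B(\rho^*(a),\delta)$. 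If $C$ denotes a global Lipschitz constant of $F$ on $[0,1]$, the accumulated deviation over the initial $K$ steps is at most $C^{K-1}K\,\phi(\eta_n)$, which stays below $\delta$ for $n$ large, so $\rho_K^n\in B(\rho^*(a),2\delta)$. The contraction then gives the bound $|\rho_k^n-\rho^*(a)|\leq 2\delta\,\lambda^{k-K}+\phi(\eta_n)/(1-\lambda)$ for every $k\geq K$; applied at $k=l_n$ and taking $n\to\infty$, this forces $|t-\rho^*(a)|\leq 2\delta$, and letting $\delta\to 0$ concludes that $t=\rho^*(a)$.
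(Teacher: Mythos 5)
Your proof is correct, but the key step — showing that unbounded near-zero-cost paths force $t=\rho^*(a)$ — is carried out by a genuinely different argument than the one in the paper. The paper proceeds in the Freidlin--Wentzell spirit: from a diagonal limit of the near-minimizers it shows $\rho_k = F^k(s)$ for every $k$, then uses the approximate subadditivity $V(\rho^*,t)\le V_1(\rho^*,\rho_h^n)+V(\rho_h^n,t)$ to conclude $V(\rho^*,t)=0$, and finally a trapping argument (based on the strict inclusion $F(\overline U)\subset U$ for a small interval $U$ around $\rho^*$, yielding a positive exit cost $\alpha$) to force $t=\rho^*$. You instead establish a uniform tracking modulus $\phi$ via lower semicontinuity and compactness, propagate it through the first $K$ steps using a global Lipschitz constant $C=\sigma e^{-a}$ of $F$ (which amplifies the error by at most a factor depending on $K$ only), and then switch to the local contraction $|F'(\rho^*)|=1/(\sigma e^{-a})<1$; the telescoped bound $|\rho^n_{l_n}-\rho^*|\le 2\delta\,\lambda^{l_n-K}+\phi(\eta_n)/(1-\lambda)$ then vanishes as $n\to\infty$. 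Your route is more quantitative and entirely avoids the subadditivity of $V$; the paper's is softer and shorter, relying only on continuity of $I$ and compactness, and does not need a global Lipschitz estimate for $F$. You also spell out the converse implication with explicit almost-zero-cost paths, which the paper leaves implicit; in the third case your choice $\gamma_{l-1}=f(F^{l-1}(s))$ and the computation of the single residual term $f(F^{l-1}(s))\,I(e^{-a},\rho^*/f(F^{l-1}(s)))\to 0$ (using $f(\rho^*)=e^a\rho^*$ and $I(e^{-a},e^{-a})=0$) is exactly the right thing. Two small remarks: the constant $C^{K-1}K$ you quote is coarser than the geometric sum $\frac{C^K-1}{C-1}$ from the standard Gronwall iteration, but both suffice; and in the final step the factor $\lambda^{l_n-K}$ already kills the $2\delta$ term as $n\to\infty$, so the concluding passage $\delta\to 0$ is unnecessary though harmless.
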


\begin{pf}
Throughout the proof we write $\rho^*$ instead of $\rho^*(a)$.
Let $s,t\in[0,1]$ be such that $V(s,t)=0$.
Suppose first that $s=0$. Since $I(0,\gamma,\rho)=+\infty$ unless
$\gamma=\rho=0$,
any sequence
$(\rho_0,\gamma_0,\ldots,\gamma_l)$ such that
$\rho_0=s=0$
and
%
\[
\sum_{k=0}^{l-1} I (\rho_k,
\gamma_k,\rho_{k+1} ) < +\infty
\]
has to be the null sequence, so that necessarily $t=0$.
We suppose next that $s>0$.
For each $n\geq1$, let
$(\rho^n_0,\gamma^n_0,\ldots,\rho^n_{l(n)})$ be a sequence
of length $l(n)$ in $[0,1]$ such that
\[
\rho^n_0=s,  \rho^n_{l(n)}=t,
\qquad\sum_{k=0}^{l(n)-1} I \bigl(
\rho^n_k,\gamma^n_k,
\rho^n_{k+1} \bigr) \leq\frac{1}{n}.
\]
We consider two cases.
If the sequence $(l(n))_{n\geq1}$ is bounded, then
we can extract a subsequence
\[
\bigl(\rho^{\phi(n)}_0,\gamma^{\phi(n)}_0,
\ldots, \rho^{\phi(n)}_{l({\phi(n)})} \bigr)
\]
such that
$l({\phi(n)})=l$ does not depend on $n$, and
for any $k\in\{ 0,\ldots,l-1 \}$, the following limits exist:
\[
\lim_{n\to\infty} \rho^{\phi(n)}_k =
\rho_k,\qquad\lim_{n\to\infty} \gamma^{\phi(n)}_k
= \gamma_k.
\]
The map $I$ being continuous, we have then
\[
\forall k\in\{ 0,\ldots,l-1 \}\qquad I (\rho_k,\gamma_k,
\rho_{k+1} ) = 0,
\]
whence
\[
\forall k\in\{ 0,\ldots,l \} \qquad\rho_k = F^k(
\rho_0).
\]
Since in addition
$\rho_0=s$ and $\rho_l=t$, we conclude that
$t=F^{l}(s)$.
Suppose next that the sequence $(l(n))_{n\geq1}$ is not bounded.
Our goal is to show that $t=\rho^*$.
Using Cantor's diagonal procedure, we can extract a subsequence
\[
\bigl(\rho^{\phi(n)}_0,\gamma^{\phi(n)}_0,
\ldots, \rho^{\phi(n)}_{l({\phi(n)})} \bigr)
\]
such that,
for any $k\geq0$, the following limits exist:
\[
\lim_{n\to\infty} \rho^{\phi(n)}_k =
\rho_k,\qquad\lim_{n\to\infty} \gamma^{\phi(n)}_k
= \gamma_k.
\]
The map $I$ being continuous, we have then
\[
\forall k\geq0\qquad I (\rho_k,\gamma_k,
\rho_{k+1} ) = 0,
\]
whence
\[
\forall k\geq0\qquad\rho_k = F^k(\rho_0).
\]
We have
$ {I (\rho^*,f(\rho^*),\rho^* )} = 0$.
Let $\varepsilon>0$.
The map $I$ being continuous, there exists a neighborhood $U$ of
$\rho^*$
such that
\[
\forall\rho\in U\qquad V_1\bigl(\rho^*,\rho\bigr) \leq I \bigl(
\rho^*,f\bigl(\rho^*\bigr),\rho\bigr) < \varepsilon.
\]
Since $s>0$,
the sequence $(F^n(s))_{n\in\mathbb N}$ converges to $\rho^*$
and
$F^h(s)\in U$ for some $h\geq1$.
In particular,
\[
\lim_{n\to\infty} \rho^{\phi(n)}_h =
F^h(s) \in U,
\]
so that, for $n$ large enough,
$\rho^{\phi(n)}_h$ is in $U$
and
\[
V\bigl(\rho^*,t\bigr) \leq V_1 \bigl(\rho^*, \rho^{\phi(n)}_h
\bigr)+ V \bigl( \rho^{\phi(n)}_h,t \bigr) \leq\varepsilon+
\frac{1}{n}.
\]
Letting successively $n$ go to $\infty$ and $\varepsilon$
go to $0$,
we obtain that
$V(\rho^*,t)=0$.
Let $\delta\in\,]0,\rho^*/2[$, and let
$U=\,]\rho^*-\delta,\rho^*+\delta[$.
Let $\alpha$ be the infimum
\[
\alpha= \inf\bigl\{ I (\rho_0,\gamma_0,
\rho_{1} )\dvtx  \rho_0\in\overline{U}, 
\gamma_0\in[0,1], \rho_{1}\notin U 
\bigr\}.
\]
Since $I$ is continuous on
the compact set
$\overline{U}\times[0,1]\times
([0,1]\setminus U )$, then
\[
\exists\bigl(\rho^*_0,\gamma^*_0,\rho^*_{1}
\bigr)\in\overline{U}\times[0,1]\times\bigl([0,1]\setminus U \bigr
)\qquad
\alpha= I \bigl(\rho^*_0,\gamma^*_0,
\rho^*_{1} \bigr).
\]
The function $F$ is nondecreasing and continuous, therefore
\[
F (\overline{U} ) = F \bigl( \bigl[\rho^*-\delta,\rho^*+\delta\bigr]
\bigr) =
\bigl[ F\bigl(\rho^*-\delta\bigr),F\bigl(\rho^*+\delta\bigr) \bigr].
\]
Moreover we have
\[
\rho^*-\delta< F\bigl(\rho^*-\delta\bigr) \leq F\bigl(\rho^*+\delta\bigr
) <
\rho^*+\delta.
\]
Thus
$F(\overline U) \subset U$
and
necessarily
$\rho^*_{1}\neq F(\rho^*_0)$ and $\alpha>0$.
It follows that any sequence
$(\rho_0,\gamma_0,\ldots,\rho_l)$ such that
\[
\rho_0\in U,\qquad\sum_{k=0}^{l-1}
I (\rho_k,\gamma_k,\rho_{k+1} ) < \alpha
\]
is trapped in $U$. As a consequence, a point $t$
satisfying
$V(\rho^*,t)=0$ must belong to $U$. This is true for
any $\delta>0$, hence for any neighborhood of $\rho^*$,
thus $t=\rho^*$.
\end{pf}

%
\subsection{Persistence time}
We recall that
\[
\tau_0 = \inf\{ n\geq0\dvtx  Z_n=0 \}. %
\]
In this section, we will estimate the expected hitting time $\tau_0$
starting from a point of $\{ 1,\ldots,m \}$.
This
quantity
approximates
the persistence time of the master
sequence~$w^*$.
%

\begin{proposition}\label{exta}
Let $a \in\,]0,+\infty[$ and let $i\in\{ 1,\ldots,m \}$.
The expected hitting time $\tau_0$ of $0$ starting from $i$ satisfies
\[
\mathop{\lim
_{\ell,m\to\infty}}_
{q\to0,
{\ell q} \to a} \frac{1}{m}\ln E(
\tau_0 | Z_0=i) = V\bigl(\rho^*(a),0\bigr).
\]
\end{proposition}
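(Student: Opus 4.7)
My plan follows the classical Freidlin--Wentzell scheme adapted to our chain via the monotonicity of $(Z^\theta_n)$, which substitutes for the uniform LDP that is unavailable here. First I would dispose of the subcritical regime $\sigma e^{-a}\le 1$, where $\rho^*(a)=0$ and $V(\rho^*(a),0)=V(0,0)=0$ by Lemma~\ref{vpro}; the claim then reduces to $\frac{1}{m}\ln E(\tau_0\mid Z_0=i)\to 0$. Since $F(r)<r$ for $r\in(0,1]$ in this regime, a simple drift argument comparing $E(Z_{n+1}\mid Z_n)$ to $mF(Z_n/m)$ via the one-step LDP of Proposition~\ref{p1pgd} gives that $\tau_0$ is at most polynomial in $m$ with high probability, even from $Z_0=m$; since $E(\tau_0\mid Z_0=i)$ is non-decreasing in $i$ by the monotonicity of $(Z^\theta_n)$, this handles every $i$.

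In the bistable regime $\sigma e^{-a}>1$ the upper bound is obtained as follows. By monotonicity it suffices to bound $E(\tau_0\mid Z_0=m)$. Fix $\ve>0$ and select, from the variational definition of $V(\rho^*(a),0)$, an integer $l$ and a sequence $\rho_0=\rho^*(a),\gamma_0,\rho_1,\dots,\rho_l=0$ of cost at most $V(\rho^*(a),0)+\ve/2$; Corollary~\ref{ppgd} then produces a constant $c_1>0$ with
$$P\bigl(Z_l=0\bigm|Z_0=\lfloor\rho^*(a)m\rfloor\bigr)\;\ge\;c_1\,e^{-m(V(\rho^*(a),0)+\ve)}$$
for $m,\ell$ large. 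Since $F^n(Z_0/m)\to\rho^*(a)$ for any $Z_0\ge 1$, a further appeal to Corollary~\ref{ppgd} shows that after a fixed number $n_0(\ve)$ of steps the chain sits in $U_\delta m=\{k:|k/m-\rho^*(a)|<\delta\}$ with probability at least $1/2$. Iterating attempt cycles of length $n_0+l$, each succeeding independently with probability at least $(c_1/2)e^{-m(V(\rho^*(a),0)+\ve)}$, yields $E(\tau_0\mid Z_0=m)\le C(n_0+l)\,e^{m(V(\rho^*(a),0)+\ve)}$.

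For the matching lower bound, monotonicity gives $E(\tau_0\mid Z_0=i)\ge E(\tau_0\mid Z_0=1)$, so it suffices to estimate $E(\tau_0\mid Z_0=1)$. The positive drift $F(r)\sim\sigma e^{-a}r$ near $0$ combined with the LDP show that from $Z_0=1$ the chain reaches $U_\delta m$ in $O(\log m)$ steps without hitting $0$ with probability at least an absolute constant $c_2>0$, which reduces the problem to proving $E(\tau_0\mid Z_0=k_0)\ge e^{m(V(\rho^*(a),0)-\ve)}$ for $k_0\in U_\delta m$. For this I would apply a standard block argument: for $\delta$ small and any fixed $L$, the LDP upper bound of Corollary~\ref{ppgd} gives
$$\sup_{k\in U_\delta m}\,P\bigl(Z_L=0\bigm|Z_0=k\bigr)\;\le\;e^{-m(V(\rho^*(a),0)-\ve)},$$
while an LDP argument shows that the chain stays in $U_{2\delta}m$ for $L$ steps with probability approaching~$1$; a union bound over $\lfloor T/L\rfloor$ blocks then gives $P(\tau_0\le T\mid Z_0=k_0)\le (T/L)\,e^{-m(V(\rho^*(a),0)-\ve)}+o(1)$, and the choice $T=\tfrac{1}{2}Le^{m(V(\rho^*(a),0)-\ve)}$ yields $E(\tau_0\mid Z_0=k_0)\ge T/2$.

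The principal obstacle is the continuity-type estimate $V(x,0)\ge V(\rho^*(a),0)-\ve$ for $x$ in a small neighbourhood of $\rho^*(a)$, which is needed to justify the display above; I would extract it from Lemma~\ref{vpro} by combining a minimising sequence for $V(x,0)$ with the zero-cost travel $\rho^*(a)\to x$ granted by the local attractivity of $\rho^*(a)$. A secondary but essential difficulty is the non-uniformity of the LDP near $0$ (the rate function degenerates there), which is precisely why the monotonicity of $(Z^\theta_n)$ is crucial: it reduces the whole argument to the two specific starting points $i=m$ and $i=1$ and bypasses the uniform hypotheses that fail here and that prevent a direct invocation of Kifer's or Morrow--Sawyer's framework.
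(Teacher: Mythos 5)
Your overall architecture matches the paper's (which follows Freidlin--Wentzell, leaning on monotonicity to reduce to extremal starting points $Z_0=1$ and $Z_0=m$), and your handling of the subcritical case and the upper bound is essentially correct. But there are two substantive gaps in the lower bound, both at places the paper handles with nontrivial work.

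First, you assert that from $Z_0=1$ the chain reaches $U_\delta m$ without hitting $0$ ``with probability at least an absolute constant $c_2>0$.'' This is never proved, and it is actually much stronger than what the paper establishes: Lemma~\ref{lbed} only gives a probability of order $m^{-c\ln m}$, obtained by an explicit argument (a Poisson-type jump away from $0$, followed by iterates of the expanding perturbation $H_m(x)=F(x)-1/m$ tracked through $O(\ln m)$ steps, each step costing a factor $(m+1)^{-2}$). The paper emphasizes that this step is exactly the one that cannot be outsourced to Kifer's or Morrow--Sawyer's general theory because the rate function degenerates at $0$; you essentially skip it. Of course $m^{-c\ln m}$ is still subexponential, so it washes out in $\frac{1}{m}\ln E(\tau_0)$ — your intuition is fine — but you claim something you cannot deliver and omit the one genuinely delicate estimate.

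Second, the block union bound
$$P(\tau_0\le T\mid Z_0=k_0)\le (T/L)\,e^{-m(V(\rho^*(a),0)-\ve)}+o(1)$$
does not follow from $\sup_{k\in U_\delta m}P(Z_L=0\mid Z_0=k)\le e^{-m(V-\ve)}$. The Markov decomposition gives $P(\exists\, t\in[kL,(k+1)L):Z_t=0)=E\bigl[P_{Z_{kL}}(\exists\, s<L:Z_s=0)\bigr]$, and you have no control of this when $Z_{kL}\notin U_\delta m$ (indeed $P_1(Z_1=0)$ is bounded away from~$0$). ``The chain stays in $U_{2\delta}m$ with probability approaching~$1$'' is not enough: you need it to stay there over exponentially many blocks, and that requires the escape rate from $U_{2\delta}m$ to beat $V(\rho^*(a),0)$, which is not automatic. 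The paper avoids this by introducing the stopping time $S$, the last visit to $\{Z_n>(\rho^*-\delta)m\}$ before $\tau_\delta$, and splitting the final excursion $\tau_\delta-S$ into short (controlled by the $t$-step LDP) and long (controlled by Corollary~\ref{cbd}, an exponential bound on sojourns in $[m\delta,m(\rho^*-\delta)]$) parts. This last-excursion decomposition is what makes the union bound close, and it is the key idea your block argument is missing.

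You do correctly flag the continuity question $V(\rho^*-\delta,\delta)\to V(\rho^*,0)$, which the paper uses implicitly when letting $\delta\to 0$; that is indeed a point that needs justification.
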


\begin{pf}
Before proceeding to the proof, let us explain the general strategy,
which comes directly from the theory of Freidlin and Wentzell.
To obtain the upper bound on the persistence time, we show that,
starting from any point in $\{ 1,\ldots,m \}$,
the probability to reach a neighborhood of $0$ in a finite
number of steps is larger than
\[
\exp\bigl( -mV\bigl(\rho^*,0\bigr)-m\varepsilon\bigr).
\]
This way we can bound from above $\tau_0$ by
a geometric law with this parameter; see Lemma~\ref{tup}.
To obtain the lower bound on the persistence time, we first show
in Lemma~\ref{lbed} that, starting from any point, the process
has a reasonable probability of reaching any neighborhood
of $\rho^*$ before visiting~$0$.
This estimate is quite tedious because the process might
start from
$Z_0=1$, which is close to the unstable fixed point of $F$.
Since we need to control the hitting time of~$0$ starting from
any point, such an estimate seems to be indispensable, and it
cannot be done in the more general situations considered by
Kifer \cite{KI}
or
Morrow and Sawyer \cite{MS}
without adding some extra assumptions.
So
we give a lower bound on the probability of following the iterates
of a discrete approximation of $F$.
With a Poisson fluctuation, the process jumps away from $0$, then,
because
$F$
is expanding in the neighborhood of $0$, it reaches the
point $\eta m$ after $\ln m$ steps, for some $\eta>0$,
and with a finite number of additional steps, it lands in a neighborhood
of $\rho^*$.
We study then the excursions of the process outside
a neighborhood
of $0$ and $\rho^*$.
Whenever the process is outside such a neighborhood, it reenters the
neighborhood in a finite number of steps with
probability larger than $1-\exp(-cm)$ for some $c>0$
depending on the neighborhood.
Thus the process is very unlikely to
stay a long time outside a neighborhood of the two
attractors $\{ 0,\rho^* \}$.
In fact, the length of an excursion outside
a neighborhood of
$\{ 0,\rho^* \}$
is bounded by a constant, up to
a very unlikely event.
We consider the hitting time $\tau_\delta$ of the
$\delta$-neighborhood of $0$. Obviously we have
$\tau_0\geq
\tau_\delta$.
We focus on the portion of the
trajectory which starts at the last visit to a neighborhood of
$\rho^*$ before reaching a neighborhood of~$0$.
Such an excursion occurs at a given time with probability
less than
\[
\exp\bigl( -mV\bigl(\rho^*,0\bigr)+m\varepsilon\bigr),
\]
and therefore it is unlikely to occur before time
$\exp(
mV(\rho^*,0)-m\varepsilon)$.

We start now with the implementation of this scheme.
Throughout the proof we write $\rho^*$ instead of $\rho^*(a)$.
We start by proving an upper bound on the hitting time.
The next argument works in both cases
$\sigma e^{-a}\leq1$ and
$\sigma e^{-a}> 1$. In the case
$\sigma e^{-a}\leq1$, we have $\rho^*=0$ and
$V(\rho^*,0)=0$, and
the proof becomes simpler, so there is no need to consider a path
from
$\rho^*$ to~$0$.
We have
$ {I (\rho^*,f(\rho^*),\rho^* )}=0$.
Let $\varepsilon>0$.
The map $I$ being continuous, there exists $\delta>0$ such that
\[
\forall\rho\in\bigl]\rho^*-\delta,\rho^*+\delta\bigr[\qquad I \bigl(\rho,f(\rho
),\rho^*
\bigr) < \varepsilon.
\]
Moreover
the sequence $(F^n(1))_{n\in\mathbb N}$ converges to $\rho^*$,
thus
\[
\exists h\geq1\qquad F^h(1)\in\bigl]\rho^*-\delta,\rho^*+\delta\bigr[.
\]
Let $l\geq1$ and
let
$(\rho_0,\gamma_0,\ldots,\rho_{l})$ be a sequence in $[0,1]$ such that
\[
\rho_0=\rho^*, \rho_l=0, \qquad\sum
_{k=0}^{l-1} I (\rho_k,
\gamma_k,\rho_{k+1} ) \leq V\bigl(\rho^*,0\bigr)+
\varepsilon.
\]
We consider the sequence obtained by concatenating the two previous sequences
\begin{eqnarray*}
t_0&=&1,\qquad
s_0=f(1),\qquad
t_1=F(1),\qquad\ldots,\qquad t_{h-1}=F^{h-1}(1),
\\
s_{h-1}&=&f(t_{h-1}),
\\
t_h&=&F^h(1),\qquad s_h=f(t_h),\qquad t_{h+1}=\rho^*,\qquad s_{h+1}=\gamma_0,
\\
t_{h+2}&=&\rho_1,\qquad\ldots,\qquad t_{h+l}=
\rho_{l-1},\qquad s_{h+l}=\gamma_{l-1},\qquad t_{h+l+1}=
\rho_l=0.
\end{eqnarray*}
We set $j=h+l+1$.
This sequence satisfies
\[
t_0=1, t_{j}=0, \qquad\sum
_{k=0}^{j-1} I (t_k,s_k,t_{k+1}
) \leq V\bigl(\rho^*,0\bigr)+3\varepsilon.
\]
We have then
\[
P(Z_j=0 | Z_0=m) \geq\prod
_{k=0}^{j-1} p \bigl( {\lfloor mt_k
\rfloor}, {\lfloor ms_k\rfloor}, {\lfloor mt_{k+1}\rfloor},
{\lfloor mt_{k+1}\rfloor} \bigr). %
\]
Taking $\ln$, sending $m$ to $\infty$
and using the estimate on the transition probabilities
obtained in the proof of
{Proposition}~\ref{p1pgd},
we have
\begin{eqnarray*}
&&\mathop{\liminf_{\ell,m\to\infty,
q\to0}}_{{\ell q} \to a
} \frac{1}{m}\ln
P(Z_j=0 | Z_0=m) \geq-\sum
_{k=0}^{j-1} I (t_k,s_k,t_{k+1}
)
\geq-V\bigl(\rho^*,0\bigr)-3\varepsilon.
\end{eqnarray*}
Thus, asymptotically, we have
\[
P(Z_j=0 | Z_0=m) \geq\exp\bigl( -mV\bigl(\rho^*,0
\bigr)-4m\varepsilon\bigr).
\]
Using the monotonicity
of
the Markov chain
$(Z_n)_{n\geq0}$, we conclude that,
asymptotically,
\[
\forall i\in\{ 1,\ldots,m \}\qquad P(Z_j=0 | Z_0=i)
\geq\exp\bigl( -mV\bigl(\rho^*,0\bigr)-4m\varepsilon\bigr).
\]
We have thus a lower bound on the probability of reaching $0$ in $j$ steps
starting from any point in $\{ 1,\ldots,m \}$.
For any $n\geq0$, we have, using the Markov property,
\begin{eqnarray*}
&& P \bigl(\tau_0>(n+1)j | Z_0=m \bigr)
\\[-1pt]
&&\qquad = \sum
_{h=1}^m P \bigl(\tau_0>(n+1)j,
Z_{nj}=h | Z_0=m \bigr)
\\[-1pt]
&&\qquad = \sum_{h=1}^m P (
\tau_0>nj, Z_{nj}=h, Z_{nj+1}\neq0, \ldots,
Z_{(n+1)j}\neq0 | Z_0=m )
\\[-1pt]
&&\qquad = \sum_{h=1}^m P (Z_{nj+1}
\neq0, \ldots, Z_{(n+1)j}\neq0 | \tau_0>nj,
Z_{nj}=h, Z_0=m )
\\[-1pt]
&&\hspace*{47pt}{}\times P (\tau_0>nj, Z_{nj}=h | Z_0=m )
\\[-1pt]
&&\qquad = \sum_{h=1}^m P (
\tau_0>j | Z_0=h ) P (\tau_0>nj,
Z_{nj}=h | Z_0=m )
\\[-1pt]
&&\qquad \leq \bigl(1-\exp\bigl( -mV\bigl(\rho^*,0\bigr)-4m\varepsilon\bigr
) \bigr)
P (\tau_0>nj | Z_0=m ).
\end{eqnarray*}
Iterating this inequality, we obtain the following result.
%

\begin{lemma}
\label{tup} For any $\varepsilon>0$, there exists $j\geq1$ such that
\[
\forall n\geq0\qquad P (\tau_0>nj | Z_0=m ) \leq
\bigl(1-\exp\bigl( -mV\bigl(\rho^*,0\bigr)-4m\varepsilon\bigr)
\bigr)^n.
\]
\end{lemma}

It follows that
\begin{eqnarray*}
E(\tau_0 | Z_0=m) &=& 
\sum_{n\geq0} \sum_{k=nj+1}^{(n+1)j}
P (\tau_0\geq k | Z_0=m )
\\
&\leq&\sum
_{n\geq0} j 
P (\tau_0> nj |
Z_0=m ) \leq j\exp\bigl( mV\bigl(\rho^*,0\bigr)+4m\varepsilon
\bigr),
\end{eqnarray*}
whence
\[
\mathop{\limsup_{\ell,m\to\infty,
q\to0
}}_{{{\ell q} \to a
}
} \frac{1}{m}\ln
E(\tau_0 | Z_0=m) \leq V\bigl(\rho^*,0\bigr)+4
\varepsilon. %
\]
Letting $\varepsilon$ go to $0$ yields the desired upper bound.

We compute next a lower bound on the hitting time.
If $\sigma e^{-a}\leq1$, then $\rho^*=0$,
$V(\rho^*,0)=0$,
and obviously,
\[
\mathop{\liminf_{\ell,m\to\infty,
q\to0
}}_{
{{\ell q} \to a
}
} \frac{1}{m}\ln
E(\tau_0 | Z_0=m) \geq V\bigl(\rho^*,0\bigr) = 0.
\]
Thus we need only to consider
the case $\sigma e^{-a}>1$.
We start by estimating from below the probability of going from $1$ to
a neighborhood of $\rho^*$ without visiting~$0$. Before proceeding
with the mathematical details, let us explain the strategy to get
this lower bound.
When $Z_0=1$,
the binomial law
involved in the replication mechanism
can be approximated by a Poisson law
of parameter~$\sigma$, and the process
$(Z_n)_{n\geq0}$ can jump
to any fixed $h\in{\mathbb N}$ with a probability larger than a
positive quantity
independent of~$m$.
Using a simple estimate on the central term of the binomial law,
we have that
\[
P \bigl(Z_{n+1}=G(h) | Z_n=h \bigr) \geq
\frac{1}{(m+1)^{2}},
\]
where
$G$ is a map from $\{ 0,\ldots,m \}$ to $\{ 0,\ldots,m \}$ such that
\[
\frac{1}{m} G(h) \geq F \biggl(\frac{h}{m} \biggr) -
\frac{1}{m}.
\]
We study then the iterates of the function $F(x)-1/m$.
This function, which is a small perturbation of $F$, has two fixed
points, one unstable close to $0$, of order $1/m$, and one
stable close to $\rho^*$.
We take $h$ large enough so that $h/m$ is larger than the unstable fixed
point. Then the repulsive dynamics of
$F(x)-1/m$ will bring the point $h/m$ close to a value $\eta>0$
(independent of $m$) in a number of iterates of order $\ln m$.
Once the process
$(Z_n)_{n\geq0}$ is at $\lfloor\eta m\rfloor$, a finite
number of iterates leads into the neighborhood of $\rho^*$.
The lower bound is obtained by combining the three steps
\[
P(1\to h)P(h\to\eta m) P\bigl(\eta m\to\bigl(\rho^*-\delta\bigr) m\bigr
) \geq c
\biggl(\frac{1}{(m+1)^{2}} \biggr)^{{c \ln m}+{c}},
\]
where $c$ is a constant independent of $m$. This is the
idea of the proof of the next lemma.

%
\begin{lemma}
\label{lbed}
For any $\delta>0$, there exists $c>0$, depending on $\delta$, such that,
asymptotically,
%
\[
P \bigl(Z_1>0,\ldots,Z_{\lfloor c\ln m\rfloor-1}>0,
Z_{\lfloor c\ln m\rfloor} >m\bigl(\rho^*-\delta\bigr) | Z_0=1 \bigr)
\geq
\frac{1}{m^{c\ln m}}.
\]
\end{lemma}

\begin{pf}
The binomial law ${\mathcal B}(n,p)$ of parameters $n\geq0$ and
$p<1$ is maximal at $\lfloor(n+1)p\rfloor$, therefore
%
\[
\pmatrix{ {n}
\cr
\bigl\lfloor(n+1)p\bigr\rfloor} p^{
\lfloor(n+1)p\rfloor}
(1-p)^{
n-\lfloor(n+1)p\rfloor} \geq\frac{1}{n+1}.
\]
See, for instance, Chapter VI in Feller's book \cite{FE}.
We shall use this inequality to bound from below the transition
probabilities of the Markov chain
$(Z_n)_{n\geq0}$. Let us define a map $G\dvtx \{ 0,\ldots,m \}\to\{
0,\ldots,m \}$ by
\begin{eqnarray*}
\forall h\in\{ 0,\ldots,m-1 \}\qquad G(h) &=& \biggl\lfloor\biggl( \biggl
\lfloor(m+1)f \biggl(\frac{h}{m} \biggr) \biggr\rfloor+1
\biggr)M_H(0,0) \biggr\rfloor,
\\
G(m) &=& \bigl\lfloor(m+1)M_H(0,0)\bigr\rfloor.
\end{eqnarray*}
The map $G$ depends on the parameters $m,\ell$ and $q$.
Applying the previous lower bound to the binomial laws involved in the
transition step of
$(Z_n)_{n\geq0}$, we obtain
\begin{eqnarray*}
\forall n,h\geq0\qquad && P \bigl(Z_{n+1}\geq G(h) | Z_n= h
\bigr)
\\
&&\qquad \geq
\sum_{k\geq G(h)} p \biggl(h, \biggl\lfloor(m+1)f \biggl(
\frac{h}{m} \biggr) \biggr\rfloor, G(h),k \biggr)
\\
&&\qquad
\geq\frac{1}{(m+1)^{2}}.
\end{eqnarray*}
It follows that
for $n,h\geq0$,
%
\begin{eqnarray*}
&& P \bigl( Z_{1}\geq G^{1}(h),\ldots, Z_{n}
\geq G^{n}(h) | Z_0=h \bigr)
\\
&&\qquad= \sum_{l\geq
G^{n-1}(h)
} P \bigl( Z_{1}\geq
G^{1}(h),\ldots, Z_{n-1}=l, Z_{n}\geq
G^{n}(h) | Z_0=h \bigr)
\\
&&\qquad= \sum_{l\geq
G^{n-1}(h)
} P \bigl( Z_{n}\geq
G^{n}(h) | Z_{n-1}=l \bigr)
\\
&&\hspace*{71pt}{} \times P \bigl( Z_{1}\geq G^{1}(h),\ldots,
Z_{n-1}=l | Z_0=h \bigr)
\\
&&\qquad\geq P \bigl( Z_{n}\geq G^{n}(h) |
Z_{n-1}= G^{n-1}(h) \bigr)
\\
&&\quad\qquad{} \times P \bigl( Z_{1}\geq G^{1}(h),\ldots,
Z_{n-1}\geq G^{n-1}(h) | Z_0=h \bigr)
\\
&&\qquad\geq\frac{1}{(m+1)^{2}} P \bigl( Z_{1}\geq
G^{1}(h),\ldots, Z_{n-1}\geq G^{n-1}(h) |
Z_0=h \bigr).
\end{eqnarray*}
Iterating this inequality, we obtain, for
$n,h\geq0$,
\begin{eqnarray*}
&&P \bigl( Z_{1}\geq G^{1}(h),\ldots, Z_{n}\geq
G^{n}(h) | Z_0=h \bigr) \geq\frac{1}{(m+1)^{2n}}.
\end{eqnarray*}
The map $G$ is nondecreasing.
Moreover, for $h\in\{ 0,\ldots,m \}$,
\[
G(h) \geq\biggl\lfloor(m+1)f \biggl(\frac{h}{m}
\biggr)M_H(0,0) \biggr\rfloor\geq mf \biggl(\frac{h}{m}
\biggr)M_H(0,0)-1.
\]
Let us define a map $H\dvtx [0,1]\to[0,1]$ by
\[
\forall x\in[0,1]\qquad H(x) = e^aM_H(0,0)F(x)-
\frac{1}{m}.
\]
We can rewrite the previous inequality as
\[
\forall h\in\{ 0,\ldots,m \}\qquad G(h) \geq m H \biggl(\frac{h}{m}
\biggr).
\]
Iterating this inequality, we get, thanks to the fact that both
$G$ and $H$ are nondecreasing,
\[
\forall n\geq0,\ \forall h\in\{ 0,\ldots,m \}\qquad G^{n}(h) \geq
m H^n \biggl(\frac{h}{m} \biggr).
\]
The map $H$, which is a small perturbation of the map $F$, has two fixed
points $\rho'<\rho''$, whose expansion is given by
\begin{eqnarray*}
\rho' &=& \frac{1}{m(\sigma M_H(0,0)-1)}+o \biggl(\frac{1}{m} \biggr),
\\
\rho'' &=& \frac{\sigma M_H(0,0)-1}{\sigma-1} -\frac{\sigma
M_H(0,0)}{m(\sigma M_H(0,0)-1)} +o
\biggl(\frac{1}{m} \biggr).
\end{eqnarray*}
Notice that $M_H(0,0)$ converges to $e^{-a}$, so
$\rho'$ is close to $0$ and $\rho''$ is close to $\rho^*$.
Let $\eta>0$. If $x\leq\eta$, we have
$F(x)\geq\alpha x$, where
\[
\alpha= \frac{\sigma e^{-a}}{(\sigma-1)\eta+1}.
\]
For $\eta$ sufficiently small, we have $\alpha>1$ and the map $F$
restricted to
$[0,\eta]$
is expanding.
Let $\beta=(1+\alpha)/2$. Asymptotically, we have
$\alpha e^a M_H(0,0)\geq\beta$.
Let us study the iterates of $x$ through the
map $H$. We set
\[
N = \inf\bigl\{ n\geq0\dvtx  H^n(x)>\eta\bigr\}.
\]
For $1\leq n<N$, we have then
\[
H^{n}(x) = H \bigl(H^{n-1}(x) \bigr) \geq\beta
H^{n-1}(x) 
- \frac{1}{m}, %
\]
which we rewrite as
\[
\frac{1}{\beta^{n}} H^{n}(x) \geq\frac{1}{\beta^{n-1}}
H^{n-1}(x) -\frac{1}{m\beta^{n-1}}. %
\]
Summing from $n=1$ to $N-1$, we get
\[
H^{N-1}(x) \geq{\beta^{N-1}} \Biggl( x-\frac{1}{m}\sum
_{n=0}^{N-2}\frac{1}{\beta^{n}} \Biggr) \geq{
\beta^{N-1}} \biggl( x-\frac{\beta}{m(\beta-1)} \biggr). %
\]
Let $h$ be an integer such that
\[
h \geq2\frac{\beta}{\beta-1}.
\]
Notice that this condition does not depend on $m$. We suppose that
$m>h$. We take $x=h/m$, and we denote by $N(h)$ the
associated integer.
From the previous inequalities,
we have then
\[
\eta\geq H^{N(h)-1} \biggl( \frac{h}{m} \biggr) \geq{
\beta^{N(h)-1}} \frac{h}{2m}. %
\]
Thus $N(h)$ satisfies
\[
N(h) \leq1+\frac{1}{\ln\beta}\ln\frac{2m\eta}{h},
\]
%
and we have, asymptotically,
\begin{eqnarray*}
&& P (Z_1>0,\ldots,Z_{N(h)-1}>0, Z_{N(h)} >m\eta|
Z_0=h )
\\
&&\qquad \geq P \biggl( Z_{1}\geq m H^{1}
\biggl(\frac{h}{m} \biggr),\ldots, Z_{{N(h)}}\geq m H^{{N(h)}}
\biggl(\frac{h}{m} \biggr) \Big| Z_0=h \biggr)
\\
&&\qquad \geq P \bigl(
Z_{1}\geq G^{1}(h),\ldots, Z_{{N(h)}}\geq
G^{{N(h)}}(h) | Z_0=h \bigr)
\\
&&\qquad \geq\frac{1}{(m+1)^{2{N(h)}}}.
\end{eqnarray*}
We control next the probability to go from $1$ to $h$.
We have
\[
P(Z_1\geq h | Z_0=1) \geq\pmatrix{ {m}
\cr
{h}}
\frac{ \sigma^h
(m-1)^{m-h}}{ (\sigma-1+m )^m} M_H(0,0)^h.
\]
In this regime, where $h$ is fixed and $m$ is large, the binomial law
involved in the replication mechanism can be approximated by a Poisson law
of parameter
$\sigma$, whence, asymptotically,
\[
P(Z_1\geq h | Z_0=1) \geq
\frac{1}{2} \exp(-\sigma)\frac{\sigma^h}{h!}\exp(-ah).
\]
We control finally the probability to go from $\eta m$ to
the neighborhood of~$\rho^*$. We do this by following the iterates
of $F$ starting from $\eta$, and by controlling the error term
with respect to the iterates of $H$.
%

\begin{lemma}
\label{taylor}
We suppose that $\sigma e^{-a}>1$.
For $n\geq0$, $x\in[0,1]$,
we have
%
\[
H^{n}(x) \geq\bigl(e^aM_H(0,0)F
\bigr)^{n}(x) - \frac{1}{m} \frac{
(\sigma M_H(0,0))^{n+1}}{\sigma M_H(0,0)-1}.
\]
\end{lemma}

\begin{pf}
We have
\[
\forall x\in[0,1]\qquad\bigl|F'(x) \bigr| \leq\sigma e^{-a},
\]
and, for any $n\geq0$,
\[
H^{n+1}(x) = H \bigl(H^n(x) \bigr) = e^aM_H(0,0)
F \bigl(H^n(x) \bigr) -\frac{1}{m}. %
\]
We shall prove the following inequality by induction on $n$:
\[
H^{n}(x) \geq\bigl( e^aM_H(0,0) F
\bigr)^{n}(x) - \frac{1}{m} \sum_{k=1}^{n}
\bigl(\sigma M_H(0,0)\bigr)^{k}.
\]
The inequality is true for $n=0,1$.
Suppose that the inequality holds for some $n\geq0$.
Since $F$ is nondecreasing, we deduce from the inequality
on $F'$
and the mean value theorem
that
\begin{eqnarray*}
H^{n+1}(x) &\geq& e^aM_H(0,0) F \Biggl(
\bigl( e^aM_H(0,0) F \bigr)^{n}(x) -
\frac{1}{m} \sum_{k=1}^{n} \bigl(
\sigma M_H(0,0)\bigr)^{k} \Biggr) -\frac{1}{m}
\\
& \geq& \bigl( e^aM_H(0,0) F \bigr)^{n+1}(x)
- \sigma M_H(0,0) \frac{1}{m} \sum
_{k=1}^{n} \bigl(\sigma M_H(0,0)
\bigr)^{k} -\frac{
1
}{m}
\\
&\geq& \bigl( e^aM_H(0,0) F \bigr)^{n+1}(x) -
\frac{1}{m} \sum_{k=1}^{n+1} \bigl(
\sigma M_H(0,0)\bigr)^{k},
\end{eqnarray*}
and the inequality is proved at rank $n+1$.
Summing the geometric series, we obtain the inequality stated
in the lemma.
\end{pf}

Let $\delta>0$.
The sequence $(F^n(\eta))_{n\in\mathbb N}$ converges to $\rho^*$,
thus
$F^t(\eta)
>\rho^*-\delta$ for some $t\geq1$.
For $m$ large enough, we have also
\[
\bigl(e^aM_H(0,0) F \bigr)^{t}(\eta) -
\frac{1}{m} \frac{(\sigma M_H(0,0))^{t+1}}{
\sigma M_H(0,0)-1} > \rho^*-\delta,
\]
and Lemma~\ref{taylor} implies that
$H^{t}(\eta)
>\rho^*-\delta$.
Let $i$ be an integer strictly larger than~$\eta m$.
We have
\begin{eqnarray*}
&& P \bigl(Z_1>0,\ldots,Z_{t-1}>0, Z_t >m\bigl(
\rho^*-\delta\bigr) | Z_0=i \bigr)
\\
&&\qquad \geq P \bigl( Z_{1}
\geq m H^{1}(\eta),\ldots, Z_{t}\geq m H^{t} (
\eta) | Z_0=i \bigr)
\\
&&\qquad \geq P \biggl( Z_{1}\geq m
H^{1} \biggl(\frac{i}{m} \biggr),\ldots, Z_{t}\geq
m H^{t} \biggl(\frac{i}{m} \biggr) \Big| Z_0=i \biggr)
\\
&&\qquad \geq P \bigl( Z_{1}\geq G^{1}(i),\ldots,
Z_{t}\geq G^{t}(i) | Z_0=i \bigr)
\\
&&\qquad  \geq
\frac{1}{(m+1)^{2t}}.
\end{eqnarray*}
To conclude, we use the monotonicity of
$(Z_n)_{n\geq0}$, and we combine the three previous estimates.
The values
$h,t$ do not depend on $m$, and
there exists a positive constant $c$ depending on $\eta,h$
such that, asymptotically,
\begin{eqnarray*}
N(h)+t+1 &<& c\ln m,
\\
\biggl(\frac{1}{2} \exp(-\sigma)\frac{\sigma^h}{h!}\exp(-ah)
\biggr)^{c\ln m} \frac{1}{(m+1)^{2N(h)+2t}} &\geq&\frac{1}{m^{c\ln m}}.
\end{eqnarray*}
Let us set
$s=
\lfloor c\ln m\rfloor
- (N(h)+t )$. Recall that $t$ depends on $\eta,\delta$
and $h$ depends on $\eta$.
We have
\begin{eqnarray*}
\hspace*{-4pt}&& P\bigl(Z_1>0,\ldots, Z_{
\lfloor c\ln m\rfloor-1
}>0, Z_{\lfloor c\ln m\rfloor
} >m\bigl(\rho^*-\delta\bigr) | Z_0=1\bigr)
\\
\hspace*{-4pt}&&\qquad \geq
\sum_{j\geq h} \sum_{i>m\eta}
P\bigl(Z_1\geq h,\ldots,Z_{s-1}\geq h,Z_s=j,
Z_{s+1}>0,\ldots,Z_{s+N(h)-1}>0,
\\[-1pt]
\hspace*{-4pt}&&\hspace*{85pt} Z_{s+N(h)}=i, Z_{s+N(h)+1}>0,\ldots, Z_{s+N(h)+t-1}>0,
\\
\hspace*{-4pt}&&\hspace*{217pt} Z_{s+N(h)+t} >m\bigl(\rho^*-\delta\bigr) | Z_{0}=1 \bigr)
\\
\hspace*{-4pt}&&\qquad \geq\sum_{j\geq h} \sum_{i>m\eta}
P(Z_1\geq h,\ldots,Z_{s-1}\geq h,Z_s=j |
Z_0=1)
\\[-1pt]
\hspace*{-4pt}&&\hspace*{69pt} {}\times P (Z_{s+1}>0,\ldots,Z_{s+N(h)-1}>0, Z_{s+N(h)}=i
| Z_s=j )
\\
\hspace*{-4pt}&&\hspace*{69pt} {}\times  P \bigl(Z_{s+N(h)+1}>0,\ldots, Z_{s+N(h)+t-1}>0,
\\
\hspace*{-4pt}&&\hspace*{100pt} Z_{s+N(h)+t} >m\bigl(\rho^*-\delta\bigr) |
Z_{s+N(h)}=i \bigr)
\\
\hspace*{-4pt}&&\qquad \geq P(Z_1\geq h,\ldots,Z_s\geq h | Z_0=1)
\\
\hspace*{-4pt}&&\quad\qquad{}\times
\sum_{i>m\eta} P (Z_1>0,
\ldots,Z_{N(h)-1}>0,
Z_{N(h)}=i
| Z_0=h )
\\
\hspace*{-4pt}&&\hspace*{65pt} {}\times P \bigl(Z_{1}>0,
\ldots,Z_{t-1}>0, Z_{t} >m\bigl(\rho^*-\delta\bigr) |
Z_{0}=i \bigr)
\\
\hspace*{-4pt}&&\qquad \geq\bigl(P(Z_1\geq h | Z_0=1) \bigr)^s
\\
\hspace*{-4pt}&&\quad\qquad{} \times
P (Z_1>0,\ldots,Z_{N(h)-1}>0, Z_{N(h)} >m\eta|
Z_0=h ) \frac{1}{(m+1)^{2t}}
\\
\hspace*{-4pt}&&\qquad \geq\biggl(\frac{1}{2} \exp(-\sigma)\frac{\sigma^h}{h!}\exp(-ah)
\biggr)^s \frac{1}{(m+1)^{2N(h)+2t}} \geq\frac{1}{m^{c\ln m}}.
\end{eqnarray*}
This is the required lower bound.
\end{pf}

Whenever the starting point is far away from~$0$, the estimate
of Lemma~\ref{lbed} can be considerably enhanced,
as shown in the next lemma.
%

\begin{lemma}
\label{lbd}
We suppose that $\sigma e^{-a}>1$.
For any $\delta>0$, there exist $h\geq1$ and $c>0$, depending on
$\delta$,
such that, asymptotically,
\[
P \bigl(Z_1>0,\ldots,Z_{h-1}>0,
Z_h >m\bigl(\rho^*-\delta\bigr) | Z_0= {\lfloor m\delta
\rfloor} \bigr) \geq1-\exp(-cm).
\]
\end{lemma}

\begin{pf}
Let $\delta>0$.
The sequence $(F^n(\delta))_{n\in\mathbb N}$ converges to $\rho^*$.
Thus
there exists $h\geq1$ such that
$F^h(\delta)>\rho^*-\delta$.
By continuity of the map $F$, there exist
$\rho_0,\rho_1,\ldots,\rho_h>0$ such that
$\rho_0=\delta$,
$\rho_h>\rho^*-\delta$ and
\[
\forall k\in\{ 1,\ldots,h \}\qquad F(\rho_{k-1}) > \rho_k.
\]
Now,
\begin{eqnarray*}
&& P \bigl(Z_1>0,\ldots,Z_{h-1}>0, Z_h >m\bigl(
\rho^*-\delta\bigr) | Z_0= {\lfloor m\delta\rfloor}
\bigr)
\\
&&\qquad \geq
P
\bigl( \forall k\in\{ 1,\ldots,h \},  Z_k\geq m\rho_{k}
| Z_0= {\lfloor m\delta\rfloor} \bigr).
\end{eqnarray*}
Passing to the complementary event, we have
\begin{eqnarray*}
&& P \bigl(\exists k\in\{ 1,\ldots,h-1 \}, Z_k=0 \mbox{ or }
Z_h \leq m\bigl(\rho^*-\delta\bigr) | Z_0= {
\lfloor m\delta\rfloor} \bigr)
\\
&&\qquad \leq P \bigl( \exists k\in\{ 1,\ldots,h \}, Z_k< m
\rho_{k} 
| Z_0= {\lfloor m\delta\rfloor}
\bigr)
\\
&&\qquad \leq\sum_{1\leq k\leq h} P \bigl( Z_{1} \geq m
\rho_{1},\ldots, Z_{k-1} \geq m\rho_{k-1},
Z_k< m\rho_{k} | Z_0= {\lfloor m\delta
\rfloor} \bigr)
\\
&&\qquad \leq\sum_{1\leq k\leq h} \sum_{i\geq
m\rho_{k-1}
}P \bigl( Z_{k-1} =i, Z_k< m\rho_{k} |
Z_0= {\lfloor m\delta\rfloor} \bigr)
\\
&&\qquad \leq\sum_{1\leq k\leq h} \sum_{i\geq
m\rho_{k-1}}
P ( Z_k< m\rho_{k} | Z_{k-1} =i ) P \bigl(
Z_{k-1} =i | Z_0= {\lfloor m\delta\rfloor} \bigr)
\\
&&\qquad \leq\sum_{1\leq k\leq h} P \bigl( Z_1< m
\rho_{k} | Z_{0} = {\lfloor m\rho_{k-1} \rfloor}
\bigr).
\end{eqnarray*}
The large deviation principle for the transition probabilities of
the Markov chain
$(Z_n)_{n\geq0}$
stated in
Proposition~\ref{p1pgd}
implies that
for $k\in\{ 1,\ldots,h \}$,
\begin{eqnarray*}
&& \mathop{\limsup_{\ell,m\to\infty,
q\to0}}_
{{\ell q} \to a
} \frac{1}{m}\ln P
\bigl( Z_1< m\rho_{k} | 
Z_{0} = {
\lfloor m\rho_{k-1} \rfloor} \bigr)
\\
&&\qquad \leq-\inf\bigl\{ 
I
(\rho_{k-1},s,t )\dvtx s\in[0,1], t\leq\rho_{k} \bigr\} < 0.
\end{eqnarray*}
Since $h$ is fixed, we conclude that
\[
\mathop{\limsup_{\ell,m\to\infty, q\to0 }}_{{{\ell q} \to a } } \frac
{1}{m}\ln P
\left( \matrix{\exists k\in\{ 1,\ldots,h-1 \},
Z_k=0
\vspace*{3pt}\cr
\mbox{or } Z_h \leq m\bigl(\rho^*-\delta
\bigr)} \Big| Z_0= {\lfloor m\delta\rfloor} \right) < 0, %
\]
and this yields the desired estimate.
\end{pf}

With the estimate of Lemma~\ref{lbd}, we show that the process
is very unlikely to stay a long time in
$[m\delta,
m(\rho^*-\delta)]$.
%

\begin{corollary}
\label{cbd}
We suppose that $\sigma e^{-a}>1$.
Let $\delta>0$. There exist $h\geq1$ and $c>0$ such that,
asymptotically,
%
\begin{eqnarray*}
&&\forall k\in\bigl[m\delta, m\bigl(\rho^*-\delta\bigr)\bigr],\ \forall n\geq0
\\
&&\qquad P \bigl( m\delta\leq Z_t\leq m\bigl(\rho^*-\delta\bigr)
\mbox{ for }0\leq t\leq n | Z_0=k \bigr) \leq\exp\biggl(-cm \biggl
\lfloor\frac{n}{h} \biggr\rfloor\biggr).
\end{eqnarray*}
\end{corollary}

\begin{pf}
Let
$k\in[m\delta,
m(\rho^*-\delta)]$.
Let $\delta>0$, and
let $h\geq1$ and $c>0$ be associated
to $\delta$ as in
Lemma~\ref{lbd}.
We divide the interval $\{ 0,\ldots,n \}$ into subintervals of length
$h$, and
we use repeatedly the estimate of Lemma~\ref{lbd}.
Let $i\geq0$. We write
\begin{eqnarray*}
&& P \bigl( m\delta\leq Z_t\leq m\bigl(\rho^*-\delta\bigr)
\mbox{ for }0\leq t\leq(i+1)h | Z_0=k \bigr)
\\
&&\qquad =
\sum_{\delta m\leq j
\leq(\rho^*-\delta)m
} P \bigl( m\delta\leq Z_t
\leq m\bigl(\rho^*-\delta\bigr)
\\
&&\hspace*{104pt} \mbox{ for }0\leq t\leq(i+1)h,
Z_{ih}=j | Z_0=k \bigr)
\\
&&\qquad = \sum_{\delta m\leq j
\leq(\rho^*-\delta)m
} P \bigl( m\delta\leq Z_t
\leq m\bigl(\rho^*-\delta\bigr) \mbox{ for }0\leq t\leq ih,
Z_{ih}=j | Z_0=k \bigr)
\\
&&\hspace*{92pt}{}\times P \bigl( m\delta\leq Z_t\leq m\bigl(\rho^*-\delta\bigr)
\mbox{ for }ih\leq t\leq(i+1)h | Z_{ih}=j \bigr)
\\
&&\qquad \leq\sum_{\delta m\leq j
\leq(\rho^*-\delta)m
} P \bigl( m\delta\leq
Z_t\leq m\bigl(\rho^*-\delta\bigr) \mbox{ for }0\leq t\leq ih,
Z_{ih}=j | Z_0=k \bigr)
\\
&&\hspace*{92pt}{} \times P \bigl(Z_h \leq m\bigl(\rho^*-\delta\bigr) |
Z_0= {\lfloor m\delta\rfloor} \bigr)
\\
&&\qquad \leq P \bigl( m\delta\leq Z_t\leq m\bigl(\rho^*-\delta\bigr)
\mbox{ for }0\leq t\leq ih | Z_0=k \bigr) \exp(-cm).
\end{eqnarray*}
Iterating this inequality, we obtain
\[
\forall i\geq0\qquad P \bigl( m\delta\leq Z_t\leq m\bigl(\rho^*-
\delta\bigr) \mbox{ for }0\leq t\leq ih | Z_0=k \bigr) \leq
\exp(-cmi). %
\]
The claim of the corollary follows by applying this inequality with
$i$ equal to the integer part of $n/h$.
\end{pf}

We have computed the relevant estimates to reach the neighborhood
of $\rho^*$. Our next goal is to study the hitting
time $\tau_0$ starting from a neighborhood of $\rho^*$.
Since we need only a lower bound, we shall study the hitting time
of a neighborhood of~$0$.
For $\delta>0$, we define
\[
\tau_\delta= \inf\{ n\geq0\dvtx  Z_{n}<m\delta\}.
\]
Let $i > (\rho^*-\delta)m$.
We shall estimate the expectation of $\tau_\delta$
starting from $i$.
The strategy consists of looking
at the portion of the
trajectory starting at the last visit to the neighborhood of
$\rho^*$ before reaching the neighborhood of~$0$.
Accordingly,
we define
\[
S = \max\bigl\{ n\leq\tau_\delta\dvtx  Z_{n}> \bigl(\rho^*-
\delta\bigr)m \bigr\}.
\]
Notice that $S$ is not a Markov time.
We write, for $n,k\geq1$,
\begin{eqnarray*}
&& P (\tau_\delta\leq n | Z_0=i )
\\
&&\qquad = \sum
_{1\leq s<t\leq n} P ( \tau_\delta=t, S=s | Z_{0} =i
)
\\
&&\qquad = \mathop{\sum_{1\leq s<t\leq n}}_{
{s<t\leq s+k }} P (
\tau_\delta=t, S=s | Z_{0} =i )+ \mathop{\sum
_{1\leq s< n}}_{
{s+k<t\leq n }} P ( \tau_\delta=t, S=s |
Z_{0} =i ).
\end{eqnarray*}
Let $h\geq1$ and $c>0$ be associated
to $\delta$ as in
Corollary~\ref{cbd}.
For
${1\leq s< n}$ and $t>s+k$,
\begin{eqnarray*}
&& P ( \tau_\delta=t, S=s | Z_{0} =i )
\\
&&\qquad = \sum
_{m\delta\leq j
\leq(\rho^*-\delta)m
} P ( \tau_\delta=t, S=s, Z_{s+1}=j |
Z_{0} =i )
\\
&&\qquad \leq\sum_{m\delta\leq j
\leq(\rho^*-\delta)m
} P \left(
\matrix{ \delta m\leq Z_r\leq\bigl(\rho^*-\delta\bigr)m
\vspace*{3pt}\cr
\mbox{for }s+1\leq r\leq t-1} \Big| Z_{s+1}=j \right)
\\
&&\qquad
\leq m \exp
\biggl(-cm \biggl\lfloor\frac{t-s-2}{h} \biggr\rfloor\biggr),
\end{eqnarray*}
whence
\[
\mathop{\sum_{1\leq s< n}}_{s+k<t\leq n } P (
\tau_\delta=t, S=s | Z_{0} =i ) \leq n \sum
_{t\geq k} m\exp\biggl(-cm \biggl\lfloor\frac{t-1}{h}
\biggr\rfloor\biggr). %
\]
For
${1\leq s<t\leq n}$
and $t\leq s+k$,
\begin{eqnarray*}
&& P ( \tau_\delta=t, S=s | Z_{0} =i )
\\
&&\qquad \leq\sum
_{j> (\rho^*-\delta)m
} P ( \tau_\delta=t, S=s, Z_s=j |
Z_{0} =i )
\\
&&\qquad \leq\sum_{j
> (\rho^*-\delta)m
}
P
( Z_t<\delta m | Z_{s} =j )
\\
&&\qquad
\leq m P \bigl(
Z_{t-s}<\delta m | Z_{0} = {\bigl\lfloor\bigl(\rho^*-
\delta\bigr)m \bigr\rfloor} \bigr),
\end{eqnarray*}
whence
\begin{eqnarray*}
&&\mathop{\sum_{1\leq s< n}}_{
{s<t\leq s+k }} P (
\tau_\delta=t, S=s | Z_{0} =i ) \leq
n\sum_{1\leq t\leq k} m P \bigl( Z_{t}<\delta m |
Z_{0} = {\bigl\lfloor\bigl(\rho^*-\delta\bigr)m \bigr\rfloor} \bigr).
\end{eqnarray*}
Putting together the previous inequalities, we obtain
\begin{eqnarray*}
P (\tau_\delta\leq n | Z_0=i ) &\leq& n \sum
_{t\geq k} m\exp\biggl(-cm \biggl\lfloor\frac{t-1}{h}
\biggr\rfloor\biggr)
\\
&&{} + n\sum_{1\leq t\leq k} m P \bigl( Z_{t}<\delta m
| Z_{0} = {\bigl\lfloor\bigl(\rho^*-\delta\bigr)m \bigr\rfloor}
\bigr).
\end{eqnarray*}
We choose $k$ large enough so that
\[
\mathop{\limsup_{\ell,m\to\infty, q\to0 }}_
{{\ell q} \to a } \frac{1}{m}\ln
\biggl( \sum_{t\geq k} m\exp\biggl(-cm \biggl\lfloor
\frac{t-1}{h} \biggr\rfloor\biggr) \biggr) < -V\bigl(\rho^*-\delta,\delta
\bigr),
\]
and we use the large deviation principle stated in
Corollary~\ref{ppgd}
to estimate the second sum,
\begin{eqnarray*}
&& \mathop{\limsup_{\ell,m\to\infty, q\to0 }}_
{{\ell q} \to a } \frac{1}{m}\ln
\biggl( \sum_{1\leq t\leq k} m P \bigl( Z_{t}<\delta
m | Z_{0} = {\bigl\lfloor\bigl(\rho^*-\delta\bigr)m \bigr\rfloor}
\bigr) \biggr)
\\
&&\qquad \leq- \min_{1\leq t\leq k} V_t\bigl(\rho^*-\delta,\delta
\bigr) \leq-V\bigl(\rho^*-\delta,\delta\bigr).
\end{eqnarray*}
Applying the previous inequalities with
$n=\exp(
mV(\rho^*-\delta,\delta)-m\delta)$, we conclude that
\[
\mathop{\lim_{\ell,m\to\infty, q\to0 }}_
{{\ell q} \to a } P \bigl(
\tau_\delta\leq\exp\bigl(mV\bigl(\rho^*-\delta,\delta\bigr)-m\delta
\bigr) | Z_0=i \bigr) = 0
\]
and therefore
\[
\mathop{\liminf_{\ell,m\to\infty, q\to0}}_
{{\ell q} \to a
} \frac{1}{m}\ln
E(\tau_\delta| Z_0=i) \geq V\bigl(\rho^*-\delta,\delta
\bigr)-\delta.
\]
To derive a lower bound on the expectation of $\tau_0$
starting from~$1$,
we combine
the previous estimates as follows.
By Lemma~\ref{lbed},
asymptotically,
\[
P \bigl(Z_1>0,\ldots,Z_{
{\lfloor c\ln m\rfloor}
-1}>0, Z_{\lfloor c\ln m\rfloor} >m\bigl(
\rho^*-\delta\bigr) | Z_0=1 \bigr) \geq\frac{1}{m^{c\ln m}}.
\]
Thus,
letting $i={\lfloor
(\rho^*-\delta)m
\rfloor}+1$,
for any $n\geq
{\lfloor c\ln m\rfloor}$,
\begin{eqnarray*}
&& P(\tau_0>n | Z_0=1)
\\
&&\qquad \geq
\sum
_{j\geq i} P ( Z_1>0,\ldots,Z_{
{\lfloor c\ln m\rfloor}
-1}>0,
Z_{\lfloor c\ln m\rfloor} 
=j, \tau_0>n | Z_0=1 )
\\
&&\qquad
\geq
\sum_{j\geq i} P (
Z_1>0,\ldots,Z_{
{\lfloor c\ln m\rfloor}
-1}>0, Z_{\lfloor c\ln m\rfloor} 
=j |
Z_0=1 )
\\
&&\hspace*{45pt}{} \times P (Z_{
{\lfloor c\ln m\rfloor}
+1}>0,\ldots, Z_n>0 | Z_{\lfloor c\ln m\rfloor} =j
)
\\
&&\qquad
\geq
P \bigl( Z_1>0,\ldots,Z_{
{\lfloor c\ln m\rfloor}
-1}>0,
Z_{\lfloor c\ln m\rfloor} >m\bigl(\rho^*-\delta\bigr) | Z_0=1 \bigr)
\\
&&\quad\qquad {} \times P (Z_{
{\lfloor c\ln m\rfloor}
+1}>0,\ldots, Z_n>0 | Z_{\lfloor c\ln m\rfloor} =i
) 
\\
&&\qquad \geq\frac{1}{m^{c\ln m}} P \bigl(
\tau_0>n- {\lfloor c\ln m\rfloor} | Z_0=i \bigr).
\end{eqnarray*}
Summing from $n= {\lfloor c\ln m\rfloor}$ to $+\infty$, we get
\[
E(\tau_0 | Z_0=1) \geq\frac{1}{m^{c\ln m}} E(
\tau_0 | Z_0=i).
\]
The very definition of $\tau_\delta$ implies that $\tau_0\geq\tau
_\delta$, whence
\[
E(\tau_0 | Z_0=i) \geq E(\tau_\delta|
Z_0=i).
\]
From the lower bound on $\tau_\delta$ and the previous inequalities,
we deduce that
\[
\mathop{\liminf_{\ell,m\to\infty,
q\to0
}}_{{\ell q} \to a
} \frac{1}{m}\ln
E(\tau_0 | Z_0=1) \geq V\bigl(\rho^*-\delta,\delta
\bigr)-\delta.
\]
The conclusion follows by letting $\delta$ go to $0$.
\end{pf}

\subsection{Concentration near \texorpdfstring{$\rho^*$}{$rho^*$}}
In this section, we estimate
the numerator of the last formula
of Section~\ref{bounds}.
As usual, we drop the superscript $\theta$ from the notation when it
is not necessary,
and we put it back when we need to emphasize the differences between
the cases
$\theta=\ell$ and $\theta=1$.
Let
$f\dvtx [0,1]\to{\mathbb R}$
be a nondecreasing continuous function
such that $f(0)=0$.
Our goal here is to estimate the expected value of the sum
\[
\sum_{n=0}^{\tau_0} f \biggl(
\frac{Z_n}{m} \biggr). %
\]
%
The Markov chain
$(Z_n)_{n\geq0}$ is a perturbation of the dynamical system associated
to the
map $F$, and therefore it spends most of its time in the neighborhood
of the
stable fixed point $\rho^*$. On very large time intervals, the process
visits points far away from $\rho^*$, and then it returns quickly to
$\rho^*$.
From this picture, we conclude that
the fraction of the time spent away from $\rho^*$ is
negligible.
We will show that~the above sum is, on average, comparable to
$f(\rho^*)\tau_0$.
%

\begin{proposition}\label{exfa}
We suppose that $\sigma e^{-a}>1$.
We have,
uniformly with respect to $i\in\{ 1,\ldots,m \}$,
%
\[
\mathop{\lim
_{\ell,m\to\infty}}_{q\to0,
{\ell q} \to a} \frac{E (
\sum_{n=0}^{\tau_0}
f ({Z_n}/{m} ) | Z_0=i )}{
E (\tau_0 | Z_0=i )} = f\bigl(\rho^*
\bigr). %
\]
\end{proposition}

\begin{pf}
Before proceeding to the proof, let us explain the general strategy,
which comes directly from the theory of Freidlin and Wentzell.
For $\delta>0$, we denote by $U(\delta)$
the $\delta$-neighborhood of $\rho^*$,
\[
U(\delta) = \bigl]\rho^*-\delta,\rho^*+\delta\bigr[.
\]
%
We choose $\delta$ small enough,
so that when the process is in
$U(2\delta)$, the value $f(Z_n/m)$ is approximated by $f(\rho^*)$.
When the process is outside of $\{ 0 \}\cup U(2\delta)$,
it reenters $U(\delta)$
in
${\lfloor c\ln m\rfloor}$ steps with probability
at least
${m^{-c\ln m}}$, for some $c>0$; see Lemma~\ref{ulbed}.
Therefore the average length of an excursion is bounded by~${m^{c\ln m}}$. At a given time, the probability to start an
excursion from $U(\delta)$ reaching the outside of
$U(2\delta)$ is less than $\exp(-cm)$, for some $c>0$.
With this estimate we can control the number of these excursions
(see Lemma~\ref{Kex}), and we show that, typically, their total
length until the time $\tau_0$ is negligible compared
to $\tau_0$.

We start now the detailed proof.
Let $\varepsilon>0$. Since $f$ is continuous, there exists $\delta>0$
such that
%
\[
\forall\rho\in U(2\delta)\qquad\bigl|f(\rho)-f\bigl(\rho^*\bigr) \bigr| <
\varepsilon.
\]
We define then a sequence of stopping times
in order to track the excursions of
$(Z_n)_{n\geq0}$ outside $U(\delta)$.
More precisely, we set
$T_0=0$ and
\begin{eqnarray*}
T_1^* &=& \inf\biggl\{ n\geq0\dvtx  \frac{Z_n}{m} \in U(\delta)\biggr\},\qquad
T_1 = \inf\biggl\{ n\geq T_1^*\dvtx \frac{Z_n}{m} \notin U(2\delta) \biggr\},
\\
&\displaystyle \vdots&
\\
T_k^* &=& \inf\biggl\{ n\geq T_{k-1}\dvtx  \frac{Z_n}{m}\in U(\delta) \biggr\},\qquad
T_k = \inf\biggl\{ n\geq T_k^*\dvtx  \frac{Z_n}{m} \notin U(2\delta) \biggr\},
\\
&\displaystyle \vdots&
\end{eqnarray*}
Next, we decompose the sum over the intervals
$[T_{k-1},T_k^*[$,
$[T_{k}^*,T_k[$, $k\geq1$.
Denoting by $s\wedge t$ the minimum $\min(s,t)$, we have
\begin{eqnarray*}
&& \sum_{n=0}^{\tau_0} f \biggl(
\frac{Z_n}{m} \biggr)-f\bigl(\rho^*\bigr) \tau_0
\\
&&\qquad =
\sum
_{k\geq1} \sum_{n=T_{k-1}\wedge\tau_0}^{T_k^*\wedge\tau_0-1}
\biggl(f \biggl(\frac{Z_n}{m} \biggr) -f\bigl(\rho^*\bigr) \biggr) +
\sum
_{k\geq1} \sum_{n=T_{k}^*\wedge\tau_0}^{T_k\wedge\tau_0-1}
\biggl(f \biggl(\frac{Z_n}{m} \biggr) -f\bigl(\rho^*\bigr) \biggr).
\end{eqnarray*}
We bound next the absolute value as follows:
%
\[
\Biggl|\sum_{n=0}^{\tau_0} f \biggl(
\frac{Z_n}{m} \biggr)-f\bigl(\rho^*\bigr) \tau_0 \Biggr| \leq2f(1)
\sum_{k\geq1} \bigl(T_{k}^*\wedge
\tau_0-{T_{k-1}\wedge\tau_0} \bigr) +
\varepsilon\tau_0. %
\]
%
It remains to deal with the sum.
We define, for $n\geq0$,
\[
K(n) = \max\{ k\geq1\dvtx T_{k-1}< n \},
\]
and the sum becomes
\[
\sum_{k\geq1} \bigl(T_{k}^*\wedge
\tau_0-{T_{k-1}\wedge\tau_0} \bigr) = \sum
_{k=1}^{K(\tau_0)} \bigl(T_{k}^*\wedge
\tau_0-{T_{k-1}} \bigr). %
\]
Let $\eta>0$. We set
\[
t_m^\eta= \exp\bigl(m\bigl(V\bigl(\rho^*,0\bigr)+\eta
\bigr) \bigr).
\]
We decompose the sum as follows:
\[
\sum_{k=1}^{K(\tau_0)}
\bigl(T_{k}^*\wedge\tau_0-{T_{k-1}} \bigr) \leq
1_{\tau_0>t_m^\eta} \tau_0 + 1_{\tau_0\leq t_m^\eta} \sum
_{k= 1}^{K(\tau_0)} \bigl(T_{k}^*\wedge
\tau_0-{T_{k-1}} \bigr).
\]
We suppose that the process starts
from $i\in\{ 1,\ldots,m \}$.
The estimates are carried out exactly in the
same way for any value of $i$, therefore, to alleviate the notation,
we remove the starting point from the notation.
Throughout the proof the expectation~$E$ and the
probability~$P$ are meant with respect to the initial
condition $Z_0=i$.
Taking expectation in the previous inequalities, we get
%
\begin{eqnarray*}
&& \Biggl| E \Biggl(\sum_{n=0}^{\tau_0} f \biggl(
\frac{Z_n}{m} \biggr) \Biggr)-f\bigl(\rho^*\bigr) E (\tau_0 ) \Biggr|
\\
&&\qquad \leq
\varepsilon E (\tau_0 )+ 2f(1) E ( 1_{\tau_0>
t_m^\eta}
\tau_0 ) + 2f(1) E \Biggl( 1_{\tau_0\leq t_m^\eta} \sum
_{k= 1}^{K(\tau_0)} \bigl(T_{k}^*\wedge
\tau_0-{T_{k-1}} \bigr) \Biggr).
\end{eqnarray*}
Next,
we take care of the second term.

%
\begin{lemma}
\label{lex}
For any $N,j\geq1$,
\[
E ( \tau_01_{\tau_0>Nj} ) \leq NjP ( {\tau_0>Nj}
)+\sum_{n\geq N}j P ( {\tau_0>nj} ).
\]
\end{lemma}

\begin{pf}We compute
\begin{eqnarray*}
E ( \tau_01_{\tau_0>Nj} ) &=& \sum_{k>Nj}
k P ( {\tau_0=k} ) = \sum_{k>Nj} \sum
_{n\geq0} 1_{n<k} P ( {\tau_0=k})
\\
&=& \sum_{n\geq0} \mathop{\sum
_{k>Nj}}_{k>n} P ( {\tau_0=k} ) = \sum
_{n\geq0} P \bigl( {\tau_0>\max(Nj,n) }
\bigr)
\\
&\leq& Nj P ( {\tau_0>Nj } )+ \sum
_{n\geq Nj} P ( {\tau_0>n } ).
\end{eqnarray*}
Next,
\[
\sum_{n\geq Nj} P ( {\tau_0>n } ) = \sum
_{n\geq N} \sum_{k=0}^{j-1}
P ( {\tau_0>nj+k } ) \leq\sum_{n\geq N} jP
( {\tau_0>nj } ),
\]
and we have the desired inequality.
\end{pf}

We apply
Lemma~\ref{tup} with $\varepsilon=\eta/8$:
there exists $j\geq1$ such that
\[
\forall n\geq0\qquad P (\tau_0>nj | Z_0=m ) \leq
\bigl(1-\exp\bigl( -mV\bigl(\rho^*,0\bigr)-m\eta/2 \bigr) \bigr)^n.
\]
We apply Lemma~\ref{lex} with this $j$ and
\[
N = \bigl\lfloor t_m^\eta/j\bigr\rfloor= \biggl\lfloor
\frac{1}{j} \exp\bigl( mV\bigl(\rho^*,0\bigr)+m\eta\bigr) \biggr
\rfloor,
\]
and we use the previous inequality
\begin{eqnarray*}
&& E ( \tau_01_{\tau_0>t_m^\eta} )
\\
&&\qquad \leq E ( \tau_01_{\tau_0>Nj}
) \leq NjP ( {\tau_0>Nj} )+\sum_{n\geq N}j
P ( {\tau_0>nj} )
\\
&&\qquad \leq \bigl(Nj+j \exp\bigl( mV\bigl(\rho^*,0
\bigr)+m\eta/2 \bigr) \bigr) \bigl(1- \exp\bigl( -mV\bigl(\rho^*,0\bigr
)-m\eta/2
\bigr) \bigr)^N
\\
&&\qquad \leq (1+j) \exp\bigl( mV\bigl(\rho^*,0\bigr)+m
\eta\bigr) 
\exp\bigl( -N \exp\bigl( -mV\bigl(\rho^*,0\bigr)-m\eta/2
\bigr) \bigr).
\end{eqnarray*}
Thanks to the choice of $N$, this last quantity goes to $0$ as $m$
goes to $\infty$.
Thus
\[
\lim_{m\to\infty} E ( 1_{\tau_0>
t_m^\eta} \tau_0 ) = 0.
\]
We deal now with the last sum in the inequality before
Lemma~\ref{lex}.
We give first an upper bound on $K$.
%

\begin{lemma}
\label{Kex}
We suppose that $\sigma e^{-a}>1$.
There exists $c>0$, depending on~$\delta$,
such that,
asymptotically,
\[
\forall k,n\geq0\qquad P \bigl(K(n)> k \bigr) \leq
\frac{n^{k}}{k!} 
\exp(-cmk).
\]
\end{lemma}

\begin{pf}
For $k\geq0$, we define
\[
S_k = \sup\biggl\{ T_{k}^*\leq n< T_{k}\dvtx
\frac{Z_n}{m} \in U(\delta) 
\biggr\}.
\]
For $k,n\geq0$, we have
\[
P \bigl(K(n)> k \bigr) = P (T_k<n ) = \sum
_{
t^*\leq s<t< n
} P \bigl(T_k^*=t^*,S_k=s,T_k=t
\bigr). %
\]
Let $h\geq1$ and $c>0$ be associated
to $\delta$ as in
Corollary~\ref{cbd}.
We can suppose that $h\geq2$.
For given values of $t^*$ and $s$,
we split the sum over $t$ in two parts,
\[
\sum_{
t\dvtx s<t< n
} P \bigl(T_k^*=t^*,S_k=s,T_k=t
\bigr) = \sum_{
t\dvtx t>s+h+1
}\cdots\qquad+ \sum
_{
t\dvtx s<t\leq s+h+1
}\cdots.%
\]
We study next the first sum, when $t>s+h+1$. We condition
on the state at time~$s+1$
\begin{eqnarray*}
\sum_{t\dvtx t>s+h+1
}\cdots &=& \mathop{\sum
_{t\dvtx t>s+h+1}}_
{j\in mU(2\delta)\setminus mU(\delta)} P \bigl(T_k^*=t^*,S_k=s,Z_{s+1}=j,T_k=t
\bigr)
\\
&\leq&\mathop{\sum_{t\dvtx t>s+h+1}}_
{j\in mU(2\delta)\setminus mU(\delta)} P
\pmatrix{ T_k^*=t^*,Z_{s+1}=j, Z_t\notin mU(2
\delta)
\vspace*{3pt}\cr
Z_{s+1},\ldots,Z_{t-1}\in mU(2\delta)\setminus mU(
\delta)}
\\
&=&  \mathop{\sum_{t\dvtx t>s+h+1
}}_
{j\in mU(2\delta)\setminus mU(\delta)}
P \bigl( Z_{s+1},\ldots,Z_{t-1}\in mU(2\delta)\setminus mU(
\delta),
\\[-3pt]
&&\hspace*{136pt} Z_t\notin mU(2\delta) | Z_{s+1}=j \bigr)
\\
&&\hspace*{64pt}{}\times P \bigl( T_k^*=t^*,Z_{s+1}=j \bigr).
\end{eqnarray*}
For
${0\leq s< n}$ and $t>s+h+1$,
\begin{eqnarray*}
&& P \bigl( Z_{s+1},\ldots,Z_{t-1}\in mU(2\delta)\setminus mU(
\delta), Z_t\notin mU(2\delta) | Z_{s+1}=j \bigr)
\\
&&\qquad \leq
P \left( \matrix{ m\delta\leq Z_r\leq m\bigl(\rho^*-\delta\bigr)
\vspace*{3pt}\cr
\mbox{for }s+1\leq r\leq t-1} \Big| Z_{s+1}=j \right)
\\
&&\quad\qquad{} + P \left( \matrix{Z_r\geq m\bigl(\rho^*+\delta\bigr)
\vspace*{3pt}\cr
\mbox{for } s+1\leq r \leq t-1} \Big| Z_{s+1}=j \right)
\\
&&\qquad \leq\exp\biggl(-cm \biggl\lfloor
\frac{t-s-2}{h} \biggr\rfloor\biggr).
\end{eqnarray*}
In fact, in Corollary~\ref{cbd}, we gave an upper bound on the first
probability. Yet the second probability
can be handled in exactly the same
way.
Thus
\[
\sum_{
t\dvtx t>s+h+1
}\cdots\leq
\biggl( \sum_{
t\geq h+1
} \exp\biggl(-cm \biggl\lfloor
\frac{t-1}{h} \biggr\rfloor\biggr) \biggr) P \bigl( T_k^*=t^*
\bigr). %
\]
Let us focus on
the second sum.
We condition
on the state at time $s$
\begin{eqnarray*}
\sum_{
t\dvtx s<t\leq s+h+1
}\cdots &=& \mathop{\sum
_{t\dvtx s<t\leq s+h+1}}_{{j\in mU(\delta)}
} P \bigl(T_k^*=t^*,S_k=s,Z_{s}=j,T_k=t
\bigr)
\\
&\leq&\mathop{\sum_{t\dvtx s<t\leq s+h+1}}_{{j\in mU(\delta)}
} P
\bigl( Z_t\notin mU(2\delta) | Z_{s}=j \bigr) P
\bigl(T_k^*=t^*,Z_{s}=j \bigr)
\\
&\leq&\mathop{\sum
_{t\dvtx 1\leq t\leq h+1}}_{j\in mU(\delta)} P \bigl( Z_t
\notin mU(2\delta) | Z_{0}=j \bigr) P \bigl(T_k^*=t^*,Z_{s}=j
\bigr).
\end{eqnarray*}
%
For any
$j\in mU(\delta)$,
using the monotonicity of
$(Z_n)_{n\geq0}$,
\begin{eqnarray*}
&& P \bigl( Z_t\notin mU(2\delta) | Z_0=j \bigr)
\\
&&\qquad \leq P
\bigl( Z_t\leq m\bigl(\rho^*-2\delta\bigr) | Z_0=j
\bigr)
+ P \bigl( Z_t\geq m\bigl(\rho^*+2\delta\bigr) | Z_0=j
\bigr)
\\
&&\qquad \leq P \bigl( Z_t\leq m\bigl(\rho^*-2\delta\bigr) |
Z_0= \bigl\lfloor\bigl(\rho^*-\delta\bigr)m \bigr\rfloor\bigr)
\\
&&\quad\qquad{} + P \bigl( Z_t\geq m\bigl(\rho^*+2\delta\bigr) | Z_0=
\bigl\lfloor\bigl(\rho^*+\delta\bigr)m \bigr\rfloor\bigr).
\end{eqnarray*}
We use the large deviation principle stated in Corollary~\ref{ppgd}
to estimate the last two terms.
For any $t\in\{ 1,\ldots,h+1 \}$,
\begin{eqnarray*}
&& \mathop{\limsup_{\ell,m\to\infty, q\to0 }}_
{{\ell q} \to a } \frac{1}{m}\ln P
\bigl( Z_t\leq m\bigl(\rho^*-2\delta\bigr) | Z_0= \bigl
\lfloor\bigl(\rho^*-\delta\bigr)m \bigr\rfloor\bigr)
\\
&&\qquad \leq-\inf\bigl\{ V_{t}\bigl(\rho^*-\delta,\rho\bigr)\dvtx \rho\leq
\rho^*-2\delta\bigr\},
\\
&& \mathop{\limsup_{\ell,m\to\infty, q\to0}}_{{\ell q} \to a } \frac
{1}{m}\ln P
\bigl( Z_t\geq m\bigl(\rho^*+2\delta\bigr) | Z_0= \bigl
\lfloor\bigl(\rho^*+\delta\bigr)m \bigr\rfloor\bigr)
\\
&&\qquad \leq-\inf\bigl\{ V_{t}\bigl(\rho^*+\delta,\rho\bigr)\dvtx \rho\geq
\rho^*+2\delta\bigr\}.
\end{eqnarray*}
By compactness, the infima are realized.
Because of the constraints on~$\rho$,
the point $\rho$ realizing the infimum
\[
\inf\bigl\{ V_{t}\bigl(\rho^*-\delta,\rho\bigr)\dvtx \rho\leq\rho^*-2
\delta\bigr\} %
\]
is not an iterate of $\rho^*-\delta$ through $F$. Hence
by Lemma~\ref{vpro},
the above infimum is positive.
We argue in the same way for the second infimum,
and
we conclude that there exists
$c'>0$, depending
on $\delta$, such that,
asymptotically,
\[
\forall{j\in mU(\delta)}\qquad\sum_{t\dvtx 1\leq t\leq h+1} P \bigl(
Z_t\notin mU(2\delta) | Z_{0}=j \bigr) \leq\exp
\bigl(-c'm\bigr), %
\]
whence
\[
\sum_{
t\dvtx s<t\leq s+h+1
}\cdots\leq
\exp\bigl(-c'm\bigr) 
P \bigl(T_k^*=t^*
\bigr). %
\]
Let $c''>0$ be such that, asymptotically,
\[
\sum_{
t\geq h+1
} \exp\biggl(-cm \biggl\lfloor
\frac{t-1}{h} \biggr\rfloor\biggr) +\exp\bigl(-c'm\bigr) \leq
\exp\bigl(-c''m\bigr).
\]
Reporting in the initial equality, we obtain that,
asymptotically,
for any $n,k\geq0$,
\begin{eqnarray*}
P (T_k<n ) 
&\leq&\sum_{
t^*\leq s< n
}
\exp\bigl(-c''m\bigr) P \bigl(T_k^*=t^*
\bigr)
\\
&\leq&\sum_{
s< n
} \exp\bigl(-c''m
\bigr) P \bigl(T_k^*\leq s \bigr)
\\
&\leq& \sum
_{
s< n
} \exp\bigl(-c''m\bigr) P
(T_{k-1}< s ).
\end{eqnarray*}
Iterating this inequality, we obtain
%
\[
P (T_k<n ) \leq\sum_{0\leq n_0<\cdots<n_{k-1}<n} \exp
\bigl(-c''mk\bigr) \leq\frac{n^k}{k!} \exp
\bigl(-c''mk\bigr) %
\]
%
as required.
\end{pf}

\renewcommand{\theequation}{$\bigcirc$}
We estimate now the last sum
in the inequality before
Lemma~\ref{lex}.
By the Cauchy--Schwarz inequality, we have
%
\begin{eqnarray}\label{bigcirc}
&& E \Biggl( 1_{\tau_0\leq t_m^\eta} \sum_{k= 1}^{K(\tau_0)}
\bigl(T_{k}^*\wedge\tau_0-{T_{k-1}} \bigr)
\Biggr) \nonumber
\\
&&\qquad = \sum_{k\geq1} E \bigl( 1_{\tau_0\leq t_m^\eta}
1_{k\leq K(\tau_0)} \bigl(T_{k}^*\wedge\tau_0-{T_{k-1}}
\bigr) \bigr)
\nonumber\\[-8pt]\\[-8pt]
&&\qquad \leq\sum_{k\geq1} P \bigl( \tau_0\leq
t_m^\eta, {K(\tau_0)\geq k}
\bigr)^{1/2} \bigl(E \bigl( 1_{k\leq K(\tau_0)} \bigl(T_{k}^*
\wedge\tau_0-{T_{k-1}} \bigr)^2 
\bigr) \bigr)^{1/2}\nonumber
\\
&&\qquad \leq\sum_{k\geq1} P \bigl( {K\bigl(
t_m^\eta\bigr)\geq k} \bigr)^{1/2} \bigl(E \bigl(
1_{k\leq K(\tau_0)} \bigl(T_{k}^*\wedge\tau_0-{T_{k-1}}
\bigr)^2 \bigr) \bigr)^{1/2}.\nonumber
\end{eqnarray}
If $1\leq k\leq K(\tau_0)$,
then $T_{k-1}<\tau_0$ and
$Z_{T_{k-1}}>0$, so that,
using the Markov property,
\begin{eqnarray*}
&& E \bigl( 1_{k\leq K(\tau_0)} \bigl(T_{k}^*\wedge\tau_0-{T_{k-1}}
\bigr)^2 \bigr)
\\
&&\qquad =
\sum_{1\leq j\leq m} E
\bigl( 1_{k\leq K(\tau_0)} \bigl(T_{k}^*\wedge\tau_0-{T_{k-1}}
\bigr)^2 | Z_{T_{k-1}}=j \bigr) P ( Z_{T_{k-1}}=j )
\\
&&\qquad \leq\sum_{1\leq j\leq m} E \bigl( 
\bigl(T_{1}^*\wedge\tau_0\bigr)^2 |
Z_{0}=j \bigr) P ( Z_{T_{k-1}}=j ).
\end{eqnarray*}
%
We will next bound the time
$T_{1}^*\wedge\tau_0$,
starting from
$j\in\{ 1,\ldots,m \}$.
%

\begin{lemma}
\label{ulbed}
We suppose that $\sigma e^{-a}>1$.
For any $\delta>0$, there exists $c>0$, depending on $\delta$, such that,
asymptotically,
for $j\in\{ 1,\ldots,m \}$,
\[
P \bigl( m\bigl(\rho^*-\delta\bigr)<
Z_{\lfloor c\ln m\rfloor} <m\bigl(\rho^*+\delta\bigr) | Z_0=j \bigr)
\geq
\frac{1}{m^{c\ln m}}.
\]
\end{lemma}

\begin{pf}
Using Lemma~\ref{lbed}, there exists $c>0$ such that,
asymptotically,
%
\[
P \bigl( Z_{\lfloor c\ln m\rfloor} \leq m\bigl(\rho^*-\delta\bigr) | Z_0=1
\bigr) \leq1-\frac{1}{m^{c\ln m}}.
\]
Proceeding as in Lemma~\ref{lbd}, we obtain
that there exist $h,c'>0$ such that, asymptotically,
\[
P \bigl( Z_{h} \geq m\bigl(\rho^*+\delta\bigr) | Z_0=m
\bigr) \leq\exp\bigl(-c'm\bigr).
\]
We have then
\begin{eqnarray*}
&& P \bigl( Z_{\lfloor c\ln m\rfloor} \geq m\bigl(\rho^*+\delta\bigr) | Z_0=m
\bigr)
\\
&&\qquad =
\sum_{j\in\{ 1,\ldots,m \}} P \bigl( Z_{\lfloor c\ln m\rfloor}
\geq m\bigl(\rho^*+\delta\bigr), Z_{\lfloor c\ln m\rfloor-h}=j | Z_0=m
\bigr)
\\
&&\qquad = \sum_{j\in\{ 1,\ldots,m \}} P \bigl( 
Z_h
\geq m\bigl(\rho^*+\delta\bigr) | Z_0=j \bigr) P (
Z_{\lfloor c\ln m\rfloor-h}=j | Z_0=m )
\\
&&\qquad \leq\exp\bigl(-c'm
\bigr).
\end{eqnarray*}
Using the monotonicity of
$(Z_n)_{n\geq0}$, we have
\begin{eqnarray*}
&& P \bigl( Z_{\lfloor c\ln m\rfloor} \notin\bigl]m\bigl(\rho^*-\delta\bigr),
m\bigl(\rho^*+
\delta\bigr)\bigr[\, | Z_0=j \bigr)
\\
&&\qquad \leq P \bigl( Z_{\lfloor c\ln m\rfloor} \leq m\bigl(\rho^*-\delta\bigr) |
Z_0=j \bigr) + P \bigl( Z_{\lfloor c\ln m\rfloor} \geq m\bigl(\rho
^*+\delta
\bigr) | Z_0=j \bigr)
\\
&&\qquad \leq P \bigl( Z_{\lfloor c\ln m\rfloor} \leq m\bigl(\rho^*-\delta\bigr) |
Z_0=1 \bigr) + P \bigl( Z_{\lfloor c\ln m\rfloor} \geq m\bigl(\rho
^*+\delta\bigr) | Z_0=m \bigr)
\\
&&\qquad \leq1-\frac{1}{m^{c\ln m}}+ \exp\bigl(-c'm\bigr).
\end{eqnarray*}
This estimate is uniform with respect to $j\in\{ 1,\ldots,m \}$.
\end{pf}

%
\begin{corollary}
\label{uclbed}
We suppose that $\sigma e^{-a}>1$.
For any $\delta>0$, there exists $c>0$, depending on $\delta$, such that,
asymptotically,
for $j\in\{ 1,\ldots,m \}$,
\[
\forall n\geq0\qquad P \bigl( T_{1}^*\wedge\tau_0 \geq n
{\lfloor c\ln m\rfloor} | Z_{0}=j \bigr) \leq\biggl(1-
\frac{1}{m^{c\ln m}} \biggr)^n. %
\]
\end{corollary}

\begin{pf}
We proceed as in Corollary~\ref{cbd} to obtain this inequality.
We divide the interval
$\{ 0,\ldots,n
{\lfloor c\ln m\rfloor}
\}$ into subintervals of length
${\lfloor c\ln m\rfloor}$, and
we use repeatedly the estimate of Lemma~\ref{ulbed}.
\end{pf}

By {Corollary}~\ref{uclbed}, we have, asymptotically, for any $j\in
\{ 1,\ldots,m \}$,
\begin{eqnarray*}
E \bigl( \bigl(T_{1}^*\wedge\tau_0\bigr)^2 |
Z_{0}=j \bigr) &=& \sum_{k\geq1} P
\bigl(T_{1}^*\wedge\tau_0\geq\sqrt{k} |
Z_{0}=j \bigr)
\\
&\leq&\sum_{k\geq1} \biggl( 1-\frac{1}{m^{c\ln m}}
\biggr)^{ \lfloor {\sqrt{k}}/{
{\lfloor c\ln m\rfloor}}
\rfloor}.
\end{eqnarray*}
Let us set
\[
\alpha= 1-\frac{1}{m^{c\ln m}},\qquad t= {\lfloor c\ln m\rfloor}.
\]
We have
\begin{eqnarray*}
&&\sum_{k\geq1} \alpha^{\lfloor\sqrt{k}/t\rfloor} \leq\sum
_{k\geq1} \alpha^{\sqrt{k}/t-1} \leq\int_{0}^\infty
\alpha^{\sqrt{x}/t-1} \,dx = \frac{t^2}{\alpha(\ln\alpha)^2} \int
_{0}^\infty
e^{-\sqrt{y}} \,dy,
\end{eqnarray*}
therefore,
asymptotically,
for any $j\in\{ 1,\ldots,m \}$,
\[
E \bigl( 
\bigl( T_{1}^*\wedge\tau_0
\bigr)^2 | Z_{0}=j \bigr) \leq m^{3c\ln m}.
\]
Reporting in the inequality before Lemma~\ref{ulbed}, we have
\[
\forall k\geq1\qquad E \bigl( 1_{k\leq K(\tau_0)} \bigl(T_{k}^*\wedge
\tau_0-{T_{k-1}} \bigr)^2 \bigr) \leq
m^{3c\ln m}.
\]
Plugging this estimate in (\ref{bigcirc})
and using Lemma~\ref{Kex}, we obtain
\begin{eqnarray*}
&& E \Biggl( 1_{\tau_0\leq t_m^\eta} \sum_{k= 1}^{K(\tau_0)}
\bigl(T_{k}^*\wedge\tau_0-{T_{k-1}} \bigr)
\Biggr)
\\
&&\qquad
\leq\sum_{k\geq0} \bigl(m^{3c\ln m}
\bigr)^{{1}/{2}} P \bigl({K\bigl(t_m^\eta\bigr)> k}
\bigr)^{1/2}
\\
&&\qquad \leq m^{3c\ln m} \biggl( t_m^\eta
\exp(-cm/3) +1+\sum_{
k\geq
t_m^\eta\exp(-cm/3)
} \biggl( \frac{(t_m^\eta)^{k}}{k!}
\exp(-cmk) \biggr)^{1/2} \biggr)
\\
&&\qquad
\leq m^{3c\ln m} \biggl(
t_m^\eta\exp(-cm/3) +1+ \sum_{
k\geq0
}
\exp\biggl( \frac{k}{2} -cm\frac{k}{3} \biggr) \biggr). %
\end{eqnarray*}
To get the last inequality, we have used that
$k!\geq({k}/{e})^k$,
whence, for
$k\geq
t_m^\eta\exp(-cm/3)$,
\[
\frac{(t_m^\eta)^{k}}{k!} \leq\biggl(\frac{et_m^\eta}{k} \biggr)^{k}
\leq
\exp(k+cmk/3).
\]
We choose $\eta$ such that $\eta<c/3$. The above inequality implies
that
\begin{eqnarray*}
&& \mathop{\limsup_{\ell,m\to\infty, q\to0
}}_{{\ell q} \to a } \frac{1}{m}\ln E
\Biggl( 1_{\tau_0\leq t_m^\eta} \sum_{k= 1}^{K(\tau_0)}
\bigl(T_{k}^*\wedge\tau_0-{T_{k-1}} \bigr)
\Biggr)
\\
&&\qquad \leq\max\biggl( V\bigl(\rho^*,0\bigr)+\eta-\frac{c}{3},0 \biggr) < V
\bigl(\rho^*,0\bigr).
\end{eqnarray*}
All these estimates, together with
Proposition~\ref{exta}, imply that, asymptotically,
uniformly with respect to $i\in\{ 1,\ldots,m \}$,
\[
\Biggl| E \Biggl(\sum_{n=0}^{\tau_0} f \biggl(
\frac{Z_n}{m} \biggr) \Big| Z_0=i \Biggr)-f\bigl(\rho^*\bigr) E (
\tau_0 | Z_0=i ) \Biggr| \leq3\varepsilon E (
\tau_0 | Z_0=i ). %
\]
This achieves the proof of Proposition~\ref{exfa}.
\end{pf}

\section{The neutral phase}\label{disc}
We denote by ${\mathcal N}$
the set
of the populations which do not contain the master sequence $w^*$, that is,
\[
{\mathcal N}= \bigl({\mathcal A}^\ell\setminus\bigl\{ w^* \bigr\}
\bigr)^m.
\]
Since we deal with the sharp peak landscape, the transition mechanism
of the process restricted to the set ${\mathcal N}$ is neutral.
We consider a Wright--Fisher process
$(X_n)_{n\geq0}$
starting from a population of ${\mathcal N}$.
We wish to evaluate the first time when a master sequence appears in the
population,
\[
\tau_* = \inf\{ n\geq0\dvtx  X_n\notin{\mathcal N} \}.
\]
We call the time $\tau_*$ the discovery time.
Until the time $\tau_*$, the process evolves in~${\mathcal N}$, and the dynamics of the Wright--Fisher model in
${\mathcal N}$ does
not depend on $\sigma$.
In particular, the law
of the discovery time $\tau_*$ is the same for the Wright--Fisher
model with $\sigma>1$
and the neutral Wright--Fisher model with $\sigma=1$. Therefore,
we compute the estimates for the latter model.

\medskip\textit{Neutral hypothesis.} Throughout this section, we suppose that
$\sigma=1$.

\subsection{Mutation dynamics}\label{mutdyn}
We consider a Markov chain
$(Y_n)_{n\geq0}
\index{$Y_n$}$
with state space $\{ 0,\ldots,\ell\} $ and having for
transition matrix the lumped mutation matrix $M_H$.
In this section, we recall some properties and estimates concerning
the Markov chain
$(Y_n)_{n\geq0}$.
We refer
to the corresponding section of~\cite{CE} for the detailed proofs.
The Markov chain
$(Y_n)_{n\geq0}$ is monotone.
We
denote by ${\mathcal B}\index{${\mathcal B}$}$
the binomial law
${\mathcal B}(\ell, 1-1/\kappa)$
with parameters
$\ell$ and $1-1/\kappa$, that is,
%
\[
\forall b\in\{ 0,\ldots,\ell\} \qquad{\mathcal B}(b) = \dbinom{\ell} {b}
\biggl(1-\frac{1}{\kappa} \biggr)^b \biggl(\frac{1}{\kappa}
\biggr)^{\ell-b}.
\]
The matrix $M_H$ is reversible with respect to
the binomial law
${\mathcal B}$.
This binomial law is the invariant probability measure of the
Markov chain
$(Y_n)_{n\geq0}$.
When $\ell$ grows, the law ${\mathcal B}$ concentrates exponentially fast
in a neighborhood of its mean
$\ell_\kappa= \ell
(1-1/\kappa)$.
We restate next without proofs several inequalities and
estimates proved in~\cite{CE}.
%

\begin{lemma}\label{exco}
For $b\leq\ell/2$, we have
\[
\frac{1}{\kappa^\ell} \biggl(\frac{\ell}{2b} \biggr)^b \leq{
\mathcal B}(b) \leq\frac{\ell^b}{\kappa^{\ell-b}}.
\]
\end{lemma}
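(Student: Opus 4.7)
The plan is to prove the two inequalities separately, each by a direct estimate on the binomial coefficient $\binom{\ell}{b}$, using the factorization
$$\cB(b)\,=\,\binom{\ell}{b}\,\frac{(\kappa-1)^b}{\kappa^\ell}\,.$$

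\textbf{Upper bound.} For the right-hand inequality, I would simply use the crude bound $\binom{\ell}{b}\leq \ell^b$ (valid from the usual product expansion $\binom{\ell}{b}=\ell(\ell-1)\cdots(\ell-b+1)/b!\leq \ell^b$) together with $(1-1/\kappa)^b\leq 1$. Inserting these into the definition of $\cB(b)$ immediately yields
$$\cB(b)\,\leq\,\ell^b\cdot 1\cdot \frac{1}{\kappa^{\ell-b}}\,=\,\frac{\ell^b}{\kappa^{\ell-b}}\,.$$
This step uses nothing specific to the hypothesis $b\leq \ell/2$.

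\textbf{Lower bound.} This is where the hypothesis $b\leq \ell/2$ enters. Since $\kappa\geq 2$ we have $(\kappa-1)^b\geq 1$, so it suffices to establish
$$\binom{\ell}{b}\,\geq\,\left(\frac{\ell}{2b}\right)^b\,.$$
Writing $\binom{\ell}{b}=\ell(\ell-1)\cdots(\ell-b+1)/b!$, I would bound the numerator from below by noting that each of the $b$ factors is at least $\ell-b+1\geq \ell/2$ (this is precisely the place where $b\leq\ell/2$ is used), and the denominator from above via the elementary inequality $b!\leq b^b$. Combining these yields $\binom{\ell}{b}\geq (\ell/2)^b/b^b=(\ell/(2b))^b$, and multiplying by $(\kappa-1)^b/\kappa^\ell\geq 1/\kappa^\ell$ gives the claimed lower bound on $\cB(b)$.

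\textbf{Difficulty.} There is essentially no obstacle here: both inequalities reduce to one-line estimates on the binomial coefficient. The only point requiring any care is identifying the role of the hypothesis $b\leq \ell/2$, which is what makes the smallest factor $\ell-b+1$ in the numerator of $\binom{\ell}{b}$ bounded below by $\ell/2$ and hence produces the factor $(\ell/(2b))^b$ in the stated lower bound.
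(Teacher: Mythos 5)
Your proof is correct. Note that the paper itself gives no proof of this lemma: it is one of the estimates explicitly ``restated without proofs'' from the companion Moran-model paper \cite{CE}, so there is nothing in the present text to compare against. Your argument supplies exactly the natural elementary justification: factor $\cB(b)=\binom{\ell}{b}(\kappa-1)^b/\kappa^\ell$, use $\binom{\ell}{b}\le\ell^b$ and $(1-1/\kappa)^b\le 1$ for the upper bound, and for the lower bound use $b!\le b^b$, $(\kappa-1)^b\ge 1$ (since $\kappa\ge 2$), and the hypothesis $b\le\ell/2$ precisely so that each of the $b$ factors $\ell,\ell-1,\dots,\ell-b+1$ is at least $\ell/2$. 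All steps check out.
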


\begin{proposition}\label{firstdesc}
We suppose that
$\ell\to+\infty, q\to0,
{\ell q} \to a\in\,]0,+\infty[$.
Asymptotically,
we have
\[
\forall n\geq\sqrt{\ell}\qquad P (Y_{n}\geq\ln\ell|
Y_0=0 ) \geq1-\exp\bigl(-\tfrac{1}{2}(\ln
\ell)^2 \bigr).
\]
\end{proposition}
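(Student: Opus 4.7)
The plan is to combine a hitting-time estimate for the level $\ln\ell$ with a recurrence estimate, exploiting the reversibility of $M_H$ with respect to the binomial $\cB$.

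First I would apply lemma~\ref{bdhit} with $b=\ln\ell$. In the regime $\ell q\to a$ we have $p\lk = q\ell\to a$, hence $p(\lk-\ln\ell)\to a$; so the hypothesis $n>2b/(p(\lk-b))$ is satisfied as soon as $n\geq\sqrt\ell$ and $\ell$ is large enough, giving
$$P\bigl(\tau(\ln\ell)>n\mid Y_0=0\bigr)\,\leq\,\frac{4\ell}{np(\lk-\ln\ell)^2}\,\leq\,\frac{C_1}{a\sqrt\ell}$$
for all $n\geq\sqrt\ell$, with some constant $C_1$ asymptotic to $4\kappa/(\kappa-1)$.

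Second, to show the chain seldom descends below $\ln\ell$ once there, I use reversibility: for any $c<\ln\ell$ and any $m\geq 0$,
$$\cB(\ln\ell)\,M_H^m(\ln\ell,c)\,=\,\cB(c)\,M_H^m(c,\ln\ell)\,\leq\,\cB(c),$$
so that
$$P_{\ln\ell}(Y_m<\ln\ell)\,\leq\,\frac{\cB(\{0,\dots,\ln\ell-1\})}{\cB(\ln\ell)}.$$
The ratio $\cB(c-1)/\cB(c)=c/((\ell-c+1)(\kappa-1))$ is bounded by $\ln\ell/((\ell-\ln\ell)(\kappa-1))\ll 1$ for $c\leq\ln\ell$, so the sum is essentially geometric and
$$P_{\ln\ell}(Y_m<\ln\ell)\,\leq\,\frac{(\ln\ell)^2}{(\ell-\ln\ell)(\kappa-1)}\,=\,o(1/\sqrt\ell).$$

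Third, I combine the two bounds via the strong Markov property at $\tau=\tau(\ln\ell)$. Since $(Y_n)$ is monotone, $P_b(Y_m<\ln\ell)\leq P_{\ln\ell}(Y_m<\ln\ell)$ for every $b\geq\ln\ell$ (when $b>\lk$ I compare via the coupling with the chain started at $\ln\ell$; for $\ln\ell\leq b\leq\lk$ the reversibility bound already gives $P_b(Y_m<\ln\ell)\leq\cB(\{<\ln\ell\})/\cB(b)\leq\cB(\{<\ln\ell\})/\cB(\ln\ell)$ directly). Thus for $n\geq\sqrt\ell$,
$$P_0(Y_n<\ln\ell)\,\leq\,P_0(\tau>n)\,+\,\sum_{s\leq n,\ b\geq\ln\ell}P_0(\tau=s,Y_\tau=b)\,P_b(Y_{n-s}<\ln\ell)\,\leq\,\frac{C_1}{a\sqrt\ell}+o(1/\sqrt\ell),$$
which is dominated by $6/(a\sqrt\ell)$ asymptotically.

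The main obstacle is the recurrence step: lemma~\ref{bdhit} only ensures that level $\ln\ell$ is reached before time $\sqrt\ell$, not that it is exceeded at the specific time $n$. The conversion from a one-time hitting estimate into a uniform-in-$n$ lower bound relies on the exponential decay of $\cB$ below its mean $\lk$, combined with reversibility and monotonicity; matching the explicit constant $6$ is a secondary concern since the bound is only required asymptotically.
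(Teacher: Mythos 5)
Your decomposition—hitting time of level $\ln\ell$ via lemma~\ref{bdhit}, then recurrence below $\ln\ell$ via reversibility and monotonicity, glued by the strong Markov property—is structurally sound and each individual step is valid. The problem is quantitative. Lemma~\ref{bdhit} with $b=\ln\ell$ gives, for $n\geq\sqrt\ell$,
$$P\big(\tau(\ln\ell)>n\,\big|\,Y_0=0\big)\,\leq\,\frac{4\ell}{np\,\big(\ell(1-1/\kappa)-\ln\ell\big)^2}\,=\,\frac{4\kappa}{na(\kappa-1)}\,\big(1+o(1)\big)\,,$$
exactly as you computed. But $4\kappa/(\kappa-1)$ equals $8$ when $\kappa=2$ (and $6$ when $\kappa=3$), and since your recurrence term contributes only $o(1/\sqrt\ell)$, your final estimate is $\frac{4\kappa}{a\sqrt\ell(\kappa-1)}(1+o(1))$, which strictly exceeds the required $\frac{6}{a\sqrt\ell}$ in the binary case $\kappa=2$ and has no margin at $\kappa=3$. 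Waving the constant aside as a ``secondary concern'' is not warranted: the number $6$ is part of the conclusion being proved, and the route through lemma~\ref{bdhit} cannot yield a coefficient smaller than $\frac{4\kappa}{\kappa-1}$.

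To close the gap you need a sharper tool. One route that avoids lemma~\ref{bdhit} entirely uses a consequence of monotonicity that you did not exploit: because $Y_0=0$ is the minimum of the state space and the kernel $M_H$ is monotone, the law of $Y_n$ is stochastically non-decreasing in $n$ (indeed $\mu_0\preceq\mu_1$ trivially since $Y_1\geq 0$, and $\mu_n\preceq\mu_{n+1}$ propagates under a monotone kernel). This reduces the claim for every $n\geq\sqrt\ell$ to the single time $n=\lceil\sqrt\ell\rceil$, where a direct second-moment argument—conditional drift $p\big(\ell(1-1/\kappa)-b\big)$ and conditional variance of order $\ell q\to a$ give $E(Y_{\sqrt\ell})\sim a\sqrt\ell$ and variance $\sim a\sqrt\ell$—yields via Chebyshev a bound of order $1/(a\sqrt\ell)$, comfortably inside $6/(a\sqrt\ell)$ for every $\kappa\geq 2$.
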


%
\begin{proposition}\label{secdesc}
We suppose that
$\ell\to+\infty, q\to0,
{\ell q} \to a\in\,]0,+\infty[$.
Let $\varepsilon\in\,]0,1[$.
There exists $c(\varepsilon)>0$ such that,
asymptotically,
we have
\[
\forall n\geq\frac{4\ell}{a\varepsilon}\qquad P \bigl(Y_{n}\geq
\ell_\kappa(1-\varepsilon) | Y_0=0 \bigr) \geq1- \exp
\bigl(-c(\varepsilon)\ell\bigr).
\]
\end{proposition}

We define
\[
\tau_0 = \inf\{ n\geq0\dvtx  Y_{n}=0 \}.
\]
%

\begin{proposition}\label{visitz}
For any $a\in\,]0,+\infty[$,
\[
\mathop{\limsup_{\ell\to\infty, q\to0}}_
{{\ell q} \to a} \frac{1}{\ell}\ln
E(\tau_0 | Y_0=\ell) \leq\ln\kappa. %
\]
\end{proposition}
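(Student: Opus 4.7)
The plan is to prove the stronger asymptotic estimate
$$E(\tau_0\mid Y_0=\ell)\,\leq\,2\ell^2\kappa^\ell,$$
which after taking logarithms and dividing by $\ell$ immediately yields the announced $\limsup\leq\ln\kappa$. The argument will rest on three ingredients already at hand: the reversibility of $M_H$ with respect to the binomial measure $\cB$, the product structure of $M_H$ over the $\ell$ loci (each evolving as an independent two--state chain), and the monotonicity of the chain $(Y_n)$ noted at the beginning of the section.

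First I would invoke reversibility to write, for every $n\geq 0$,
$$P(Y_n=0\mid Y_0=\ell)\,=\,\frac{\cB(0)}{\cB(\ell)}\,P(Y_n=\ell\mid Y_0=0)\,=\,\frac{1}{(\kappa-1)^\ell}\,P(Y_n=\ell\mid Y_0=0).$$
To estimate the right--hand side I would exploit the factorization of $M_H$ into $\ell$ independent two--state per--locus mutation chains. Each such chain has invariant distribution $(1/\kappa,(\kappa-1)/\kappa)$ and second eigenvalue $1-p$ with $p=\kappa q/(\kappa-1)$, so an explicit spectral computation gives its $n$--step transition from state $0$ to state $1$ as $\frac{\kappa-1}{\kappa}\bigl(1-(1-p)^n\bigr)$. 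By independence of the loci,
$$P(Y_n=\ell\mid Y_0=0)\,=\,\left[\frac{\kappa-1}{\kappa}\bigl(1-(1-p)^n\bigr)\right]^{\ell}.$$
Under the scaling $\ell q\to a$ one has $\ell p\to a\kappa/(\kappa-1)>0$, hence $\ell\,(1-p)^{\ell^2}\leq\ell\,e^{-p\ell^2}\to 0$. Choosing $N_1=\ell^2$ and using the elementary inequality $(1-x)^\ell\geq 1-\ell x$, this gives asymptotically $P(Y_{N_1}=\ell\mid Y_0=0)\geq\tfrac{1}{2}\cB(\ell)$, which combined with the reversibility identity yields
$$P\big(Y_{N_1}=0\mid Y_0=\ell\big)\,\geq\,\frac{1}{2\kappa^\ell}.$$

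The final step would be a geometric iteration in blocks of length $N_1$. Since $\{0\}$ is a lower set and $(Y_n)$ is monotone, the map $y\mapsto P(Y_{N_1}=0\mid Y_0=y)$ is non--increasing in $y$, so the lower bound $(2\kappa^\ell)^{-1}$ persists from every starting point. Applying the Markov property at the times $0,N_1,2N_1,\dots$ one obtains
$$P(\tau_0>kN_1\mid Y_0=\ell)\,\leq\,\Bigl(1-\frac{1}{2\kappa^\ell}\Bigr)^{k}\qquad\forall k\geq 0,$$
and summing over $k$ produces $E(\tau_0\mid Y_0=\ell)\leq 2\ell^2\kappa^\ell$. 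Taking the logarithm, dividing by $\ell$ and letting $\ell\to\infty$ then yields the required $\limsup\leq\ln\kappa$.

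No serious obstacle is anticipated: the only point requiring care is to verify that $N_1=\ell^2$ is large enough to make $(1-(1-p)^{N_1})^\ell$ close to $1$, which is immediate because $p\ell^2\to+\infty$ linearly in $\ell$ under the chosen scaling. No Freidlin--Wentzell machinery is needed, because the neutral dynamics has $\lk$ as its only attractor, and the exponential delay for returning to $0$ is captured directly by $\cB(0)=\kappa^{-\ell}$.
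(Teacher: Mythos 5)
Your argument is correct. The paper itself only restates this proposition from the companion Moran paper \cite{CE} without proof, so a direct comparison is not possible here; but the estimate you establish, $E(\tau_0\mid Y_0=\ell)\le 2\ell^2\kappa^\ell$, is clean and self-contained, and every step checks out. Reversibility of $M_H$ with respect to $\cB$ gives $P(Y_n=0\mid Y_0=\ell)=\cB(0)\cB(\ell)^{-1}P(Y_n=\ell\mid Y_0=0)$; the per-locus two-state spectral computation gives $P(Y_n=\ell\mid Y_0=0)=\bigl(\tfrac{\kappa-1}{\kappa}(1-(1-p)^n)\bigr)^\ell$ (the factorization is legitimate because the coupling map $\cM_H$ acts locus by locus, so the Hamming-class chain is exactly the count of $\ell$ i.i.d.\ two-state chains); with $N_1=\ell^2$ one has $p\ell^2\to\infty$, so $\ell(1-p)^{N_1}\to 0$, and the Bernoulli inequality gives $P(Y_{N_1}=0\mid Y_0=\ell)\ge\tfrac{1}{2}\kappa^{-\ell}$; monotonicity of $(Y_n)$ (and the fact that $\ell$ is the maximal state) makes this bound uniform over the starting point, so the geometric block argument gives the stated bound on the expectation. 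Two small remarks: the reversibility step is actually dispensable, since the same per-locus factorization applied directly to the $1\to 0$ transition yields $P(Y_n=0\mid Y_0=\ell)=\bigl(\tfrac{1}{\kappa}(1-(1-p)^n)\bigr)^\ell$ immediately; and any $N_1$ with $\ln N_1=o(\ell)$ and $pN_1\gg\ln\ell$ (for instance $N_1=\ell\ln^2\ell$) would do, so $\ell^2$ is a generous but harmless choice.
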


\subsection{Ancestral lines}
Let us define an ancestral line.
For $i\in\{ 1,\ldots,m \}$ and $n\geq1$, we denote by
${\mathcal I}(i,n,n-1)
\index{${\mathcal I}(i,n,n-1)$}$
the index of the ancestor at time $n-1$ of the $i$th chromosome
at time~$n$.
More precisely, if the
$i$th chromosome of the population at time $n$ has been obtained
by replicating the $j$th chromosome of the population at time $n-1$,
then
${\mathcal I}(i,n,n-1) =j$.
For $s\leq n$,
the index
${\mathcal I}(i,n,s)$ of the ancestor at time $s$ of the $i$th chromosome
at time~$n$ is then defined recursively with the help of the following
formula:
\[
{\mathcal I}(i,n,s) = {\mathcal I}\bigl( {\mathcal
I}(i,n,n-1),n-1,s\bigr).
\]
We define also
${\mathcal I}(i,n,n)=i$.
The ancestor at time $s$ of the $i$th chromosome
at time~$n$ is the chromosome
\[
\operatorname{ancestor}(i,n,s) = X_s\bigl({\mathcal I}(i,n,s)\bigr).
\index{$\operatorname{ancestor}(i,n,s)$} %
\]
The ancestral line
of the $i$th chromosome
at time~$n$ is the sequence of its ancestors until time $0$,
\[
\bigl(\operatorname{ancestor}(i,n,s), 0\leq s\leq n\bigr) =
\bigl(X_s\bigl({\mathcal I}(i,n,s)\bigr), 0\leq s\leq n\bigr).
\]
%

\begin{proposition}\label{neal}
Let $b\in\{ 0,\ldots,m \}$, and
let
$(X_n)_{n\geq0}$ be the neutral Wright--Fisher process
starting from $(b)^m$.
Let $i\in\{ 1,\ldots,m \}$. For any $n\geq0$, the law of the
ancestral line
$(\operatorname{ancestor}(i,n,s), 0\leq s\leq n)$
of the $i$th chromosome of $X_n$
is equal to the law of
$(Y_{0},\ldots,Y_{n})$ starting from $b$.
\end{proposition}

The proof is standard. One can proceed by induction as in \cite{CE}.
In fact,
the ancestral lines
of the individuals at time $n$ are given by a coalescent process.
Along an ancestral line, a chromosome moves according to the mutation
dynamics given by the matrix $M_H$.
\subsection{Discovery time}
The dynamics of the processes
$(O_n^\ell)_{n\geq0}$,
$(O_n^1)_{n\geq0}$
in ${\mathcal N}$ are the same as the original process
$(O_n)_{n\geq0}$.
Therefore we can use the original process to compute the
discovery time
%
\[
\tau^{*} = \inf\bigl\{ n\geq0\dvtx  O_n\in{\mathcal W}^*
\bigr\}. \index{$\tau^*$} %
\]
%
The law of the discovery time $\tau^*$ is the same for the
distance process and the occupancy process.
With a slight abuse of notation, we let
\[
\tau^{*} = \inf\bigl\{ n\geq0\dvtx  D_n\in{\mathcal W}^*
\bigr\}.
\]
We will carry out the estimates of $\tau^*$ for the distance process
$(D_n)_{n\geq0}$.
Notice that the case $\alpha=+\infty$ is not covered by the result
of next proposition. This case will be handled separately, with the
help of the intermediate inequality of Corollary~\ref{majext}.

\medskip\textit{Notation.}
For $b\in\{ 0,\ldots,\ell\} $, we denote by
$(b)^m\index{$(b)^m$}$
the vector column whose components are all equal to $b$.
%

\begin{proposition}\label{bdd}
Let $a\in\,]0,+\infty[$ and
$\alpha\in[0,+\infty[$.
For any $d\in{\mathcal N}$,
%
\[
\mathop{\lim_{\ell,m\to\infty,
q\to0}}_{{\ell q} \to a,
({m}/{\ell})\to\alpha
} \frac{1}{\ell}\ln E
\bigl(\tau^* | D_0= d \bigr) = \ln\kappa.
\]
\end{proposition}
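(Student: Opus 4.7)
The proof works in the neutral case $\sigma = 1$, since the law of $\tau^*$ prior to the first appearance of the master sequence does not depend on $\sigma$. I will establish the upper and lower bounds on $(1/\ell) \ln E(\tau^* \mid D_0 = d)$ separately.

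For the upper bound, I exploit the monotonicity of the distance process (corollary \ref{corneu}) to reduce to the worst case $d = (\ell)^m$. The key tool is the ancestral-line representation of proposition \ref{neal}: the sequence $(\anc(1, T, s))_{0 \leq s \leq T}$ has the law of the mutation chain $(W_s)$ started at distance $\ell$. If $\anc(1, T, s) = w^*$ for some $s$, then the chromosome $X_s(\cI(1, T, s))$ equals the master sequence, so $\tau^* \leq s$. This gives $P(\tau^* > T) \leq P(\tau_0 > T \mid Y_0 = \ell)$ for the lumped mutation chain $(Y_n)$; summing over $T$ yields $E(\tau^* \mid D_0 = (\ell)^m) \leq E(\tau_0 \mid Y_0 = \ell)$, and proposition \ref{visitz} then delivers $\limsup (1/\ell) \ln E(\tau^*) \leq \ln \kappa$ for every $d$.

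For the lower bound, a union bound over generations and individuals combined with the marginal identification of proposition \ref{nemarg} gives
$$
P(\tau^* \leq T \mid D_0 = d) \,\leq\, \sum_{s=1}^T \sum_{i=1}^m P(Y_s = 0 \mid Y_0 = d(i)) \,.
$$
The reversibility of $M_H$ with respect to the binomial law $\cB(\ell, 1 - 1/\kappa)$ rewrites $P(Y_s = 0 \mid Y_0 = b) = (\cB(0)/\cB(b))\, P(Y_s = b \mid Y_0 = 0)$, where $\cB(0) = 1/\kappa^\ell$. I split the sum over $s$ at a relaxation time $T_{\mathrm{eq}} = O(\ell)$: for $s \leq T_{\mathrm{eq}}$ the uniform bound of lemma \ref{ivy} controls each term, while for $s > T_{\mathrm{eq}}$ propositions \ref{firstdesc} and \ref{secdesc} show that $Y_s$ started at $0$ has become concentrated near $\lk$ and $P(Y_s = b \mid Y_0 = 0)$ is comparable with $\cB(b)$, so that $P(Y_s = 0 \mid Y_0 = b) = O(1/\kappa^\ell)$. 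Choosing $T$ just below $\kappa^\ell/m$ then forces the right-hand side to be $o(1)$, whence $E(\tau^* \mid D_0 = d) \geq \kappa^{\ell(1-o(1))}/m$; since $\alpha$ is finite, $(\ln m)/\ell \to 0$ and $\liminf (1/\ell) \ln E(\tau^*) \geq \ln \kappa$ follows.

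The main obstacle is to upgrade lemma \ref{ivy}, which alone produces only the suboptimal exponent $\ln(\kappa - 1)$ coming from $\cB(0)/\cB(\ell) = (\kappa - 1)^{-\ell}$, into the uniform post-mixing estimate $P(Y_s = b \mid Y_0 = 0) \leq C \cB(b)$ valid for $s \geq T_{\mathrm{eq}}$ and all $b$. I expect this to follow from coupling $(Y_n)$ with its stationary version using the monotonicity of the chain, the polynomial-in-$\ell$ cost of the coupling being negligible against the exponential horizon $T \sim \kappa^\ell / m$. The excluded case $\alpha = +\infty$ is handled separately by the intermediate inequality mentioned as corollary \ref{majext}.
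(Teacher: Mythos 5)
Your upper bound is correct and is essentially the paper's: reduce to the worst start $(\ell)^m$ by monotonicity, use the ancestral-line law of proposition~\ref{neal} to get $P(\tau^*>n\mid D_0=(\ell)^m)\leq P(\tau_0>n\mid Y_0=\ell)$, sum over $n$, and invoke proposition~\ref{visitz}. (Your bound is in fact marginally cleaner than corollary~\ref{majext}, which carries a harmless extra factor~$m$.)

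The lower bound, however, has a genuine gap, and the obstruction is not the one you flag. By monotonicity the binding case is the worst start $d=(1)^m$, and for that start no union bound over generations and individuals can show $P(\tau^*\leq T\mid D_0=d)=o(1)$, because this probability is already bounded away from zero at $T=1$: in the neutral chain each offspring is at distance~$0$ independently with probability $M_H(1,0)=(1-q)^{\ell-1}q/(\kappa-1)\sim a e^{-a}/(\ell(\kappa-1))$, so
$$P(\tau^*=1\mid D_0=(1)^m)\,=\,1-\big(1-M_H(1,0)\big)^m\,\longrightarrow\,1-\exp\Big(-\frac{\alpha a e^{-a}}{\kappa-1}\Big)\,,$$
a strictly positive constant whenever $\alpha>0$. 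Concretely, your pre-equilibration sum $\sum_{s\leq T_{\mathrm{eq}}}\sum_i\cB(0)/\cB(d(i))$ is of order $T_{\mathrm{eq}}\,m/(\ell(\kappa-1))\sim\alpha\ell$ for $d=(1)^m$ and $T_{\mathrm{eq}}=O(\ell)$, so it diverges rather than being $o(1)$; lemma~\ref{ivy} does not ``control'' these terms, and no improvement to the post-mixing tail can rescue a union bound that already fails at $s=1$. The paper's proof therefore abandons the attempt to make the discovery probability small, and instead shows that $P(\tau^*>\kappa^{\ell(1-\ve)}\mid D_0=(1)^m)$ is bounded below by a quantity decaying only sub-exponentially in $\ell$, which still yields $E(\tau^*)\geq\kappa^{\ell(1-\ve-o(1))}$. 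This is done by constructing a favourable scenario: condition on the mutation-blocking event $\cE$ (of probability $(1-p/\kappa)^{m\ell^{3/4}}=\exp(-O(\ell^{3/4}))$, guaranteeing $\tau^*>\ell^{3/4}$); use the FKG inequality (proposition~\ref{lawD}) together with propositions~\ref{firstdesc} and~\ref{secdesc} to push the whole population to distances $\geq\lk(1-\ve')$ by time~$\ell^2$ with probability bounded away from zero; and only then apply the union bound of lemma~\ref{premhit} --- exactly your $\cB(0)/\cB(b)$ estimate --- from the far-away start $(\lk(1-\ve'))^m$. The correlation-inequality machinery is the piece your proposal misses; it is what lets one argue about a positive-probability escape scenario rather than a vanishing-probability bad event.
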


\begin{pf}
By Corollary~\ref{corneu},
the neutral distance process
$(D_n)_{n\geq0}$ is monotone. Therefore,
for any $d\in{\mathcal N}$, we have
\begin{eqnarray*}
&&E \bigl(\tau^* | D_0= (1)^m \bigr) \leq E \bigl(\tau^*
| D_0= d \bigr) \leq E \bigl(\tau^* | D_0= (
\ell)^m \bigr).
\end{eqnarray*}
To bound the discovery time $\tau^*$ from above,
we consider the time needed for a single chromosome to discover
the master sequence $w^*$,
and we remark that, if the master sequence has not been discovered
until time~$n$
in the distance process,
then certainly the ancestral line of any chromosome present at time~$n$
does not contain the master sequence.
By
Proposition~\ref{neal}, the ancestral line of any chromosome
present at time~$n$
has the same law as
$Y_{0},\ldots, Y_{n} $.
Therefore, we conclude that
\[
\forall n\geq0\qquad P \bigl(\tau^*>n | D_0= (\ell)^m
\bigr) \leq P(\tau_0>n | Y_0=\ell),
\]
where $\tau_0$ is the hitting time of $0$ for the process
$(Y_n)_{n\geq0}$.
Summing this inequality over $n\geq0$,
we obtain the following upper bound.
%

\begin{corollary}\label{majext}
For any $d\in{\mathcal N}$, any $m\geq1$, we have
\[
E \bigl(\tau^* | D_0= d \bigr) \leq E(\tau_0 |
Y_0=\ell). %
\]
\end{corollary}

With the help of
Proposition~\ref{visitz}, we obtain the desired upper bound.
To bound the discovery time $\tau^*$ from below,
we use the same strategy as in \cite{CE}.
There are two main differences in the case of the Wright--Fisher model.
First the time scale is multiplied by $m$, because $m$ mutations
can occur at each generation.
Second, the neutral distance process
$(D_n)_{n\geq0}$ has positive correlations. This makes the proof
substantially simpler than in the case of the Moran model, where
a technical exponential estimate had to be used instead of a
correlation inequality.
We give here only the main steps of the proof.
The details are similar to \cite{CE} in that they involve repeated
intermediate conditionings, use of the Markov property and
monotonicity.

We suppose that
the distance process starts from
$(1)^m$,
and we will estimate the probability
of a specific scenario leading to a discovery time close
to~${\kappa^{\ell}}$.
%
Let
${\mathcal E}$ be the event
\[
{\mathcal E} = \bigl\{ \forall n\leq\ell^{3/4},\ \forall i\in\{ 1,
\ldots,m \},
U_n^{i,1}>q/(\kappa-1) \bigr\}.
\]
If the event ${\mathcal E}$ occurs, then, until time
$\ell^{3/4}$, none of the mutation events in the process
$(D_n)_{n\geq0}$ can create a master sequence.
Let $\varepsilon>0$.
Conditioning on the population at time $m\ell^{3/4}$, we obtain
\begin{eqnarray*}
&& P \bigl(\tau^*> {\kappa^{\ell(1-\varepsilon)}} | D_{0}= (1)^m
\bigr)
\\
&&\qquad \geq P \bigl(\tau^*> {\kappa^{\ell(1-\varepsilon)}},{\mathcal E} |
D_{0}= (1)^m \bigr)
\\
&&\qquad \geq P \bigl( \tau^*> {\kappa^{\ell(1-\varepsilon)}} | D_{0}= (\ln
\ell)^m \bigr) P \bigl( D_{\ell^{3/4}}\geq(\ln\ell)^m, {\mathcal E}
 | D_{0}= (1)^m \bigr).
\end{eqnarray*}
We first study the last term in the above inequality.
The status of the process at time
$\ell^{3/4}$ is a function of the random matrices
\[
R_n = \bigl(S_n^i,U_n^{i,1},
\ldots,U_n^{i,\ell} \bigr)_{1\leq i\leq m},\qquad1\leq n\leq
\ell^{3/4}.
\]
We make an intermediate conditioning with respect to the variables
$S_n^i$,
%
\begin{eqnarray*}
&& P \bigl( D_{\ell^{3/4}}\geq(\ln\ell)^m, {\mathcal E} |
D_{0}= (1)^m \bigr)
\\
&&\qquad =
E \bigl( P \bigl(
D_{\ell^{3/4}}\geq(\ln\ell)^m, {\mathcal
E} | S_n^i, 1\leq i\leq m, 1\leq n\leq
\ell^{3/4} \bigr) | D_{0}= (1)^m \bigr).
\end{eqnarray*}
The variables
$S_n^i, 1\leq i\leq m,
1\leq n\leq\ell^{3/4}$
being fixed, all the indices of the chromosomes
selected for replication are fixed,
and since
the mutation map ${\mathcal M}_H(\cdot,u_1,\ldots,u_\ell)$ is nondecreasing
with respect to $u_1,\ldots,u_\ell$,
the state of the process
at time
$\ell^{3/4}$ is a nondecreasing function of the variables
\[
U_n^{i,1},\ldots,U_n^{i,\ell}, \qquad1
\leq i\leq m, 1\leq n\leq\ell^{3/4}.
\]
Thus the events ${\mathcal E}$ and
$D_{\ell^{3/4}}\geq
(\ln\ell)^m$
are both nondecreasing with respect to these variables.
By the FKG inequality for a product measure,
%
\begin{eqnarray*}
&& P \bigl( 
D_{\ell^{3/4}}\geq(\ln\ell)^m,
{\mathcal E} | S_n^i, 1\leq i\leq m, 1\leq n\leq
\ell^{3/4} \bigr)
\\
&&\qquad \geq
P \bigl( D_{\ell^{3/4}}\geq(\ln
\ell)^m 
| S_n^i, 1\leq i\leq m,
1\leq n\leq\ell^{3/4} \bigr) P({\mathcal E}). 
\end{eqnarray*}
We have used the fact that
${\mathcal E}$ does not depend on the variables
$S_n^i$.
Reporting in the conditioning, we obtain
\[
P \bigl( D_{\ell^{3/4}}\geq(\ln\ell)^m, {\mathcal E} \bigr)
\geq
P
\bigl( D_{\ell^{3/4}}\geq(\ln\ell)^m \bigr) P ( {\mathcal E} ).
\]
%
By Proposition~\ref{lawD},
the distance process starting from~$(1)^m$
has positive correlations,
therefore
%
\begin{eqnarray*}
&&P \bigl( D_{\ell^{3/4}}\geq(\ln\ell)^m \bigr)
\geq\prod_{1\leq i\leq m} P \bigl( 
D_{\ell^{3/4}}(i)
\geq\ln\ell\bigr) = P ( Y_{\ell^{3/4}}\geq\ln\ell)^m.
\end{eqnarray*}
Using the estimate of
Proposition~\ref{firstdesc}, we get
\[
P \bigl( D_{\ell^{3/4}}\geq(\ln\ell)^m, {\mathcal E} \bigr)
\geq\biggl( 1-\exp\biggl(-\frac{1}{2}(\ln\ell)^2 \biggr)
\biggr)^m \biggl(1-\frac{q}{\kappa-1} \biggr)^{
m\ell^{3/4}}.
\]
We study next
\[
P \bigl( \tau^*> {\kappa^{\ell(1-\varepsilon)}} 
| D_{0}= (\ln
\ell)^m \bigr).
\]
%
The following inequality can be proved exactly as Lemma~10.15 of
\cite{CE}.
%

\begin{lemma}\label{premhit}
For $b\in\{ 1,\ldots,\ell\} $, we have
\[
\forall n\geq0\qquad P \bigl( \tau^*\leq n | D_{0}=(b)^m
\bigr) 
\leq nm \frac{{\mathcal B}(0)}{{\mathcal B}(b)}.
\]
\end{lemma}

\renewcommand{\theequation}{$\heartsuit$}
Let $\varepsilon'>0$.
Conditioning on the population at time $\ell^2$, we obtain
%
\begin{eqnarray}\label{heartsuit}
&& P \bigl(\tau^*> {\kappa^{\ell(1-\varepsilon)}} | D_{0}=(\ln
\ell)^m \bigr)\nonumber
\\
&&\qquad  \geq P \bigl( \tau^*> {\kappa^{\ell(1-\varepsilon)}} |
D_{0}= \bigl(\ell_\kappa\bigl(1-\varepsilon'
\bigr)\bigr)^m \bigr)
\\
&&\quad\qquad{} \times P \bigl( \tau^*>\ell^2, D_{\ell^2}\geq\bigl(
\ell_\kappa\bigl(1-\varepsilon'\bigr)\bigr)^m
| D_{0}= (\ln\ell)^m \bigr). \nonumber
\end{eqnarray}
We first take care of the last probability.
We write
\renewcommand{\theequation}{$\natural$}
\begin{eqnarray}\label{natural}
&& P \bigl( \tau^*>\ell^2, D_{\ell^2}\geq\bigl(
\ell_\kappa\bigl(1-\varepsilon'\bigr)\bigr)^m
| D_{0}= (\ln\ell)^m \bigr)
\nonumber\\[-8pt]\\[-8pt]
&&\qquad \geq
P \bigl( D_{\ell^2}\geq\bigl(\ell_\kappa\bigl(1-
\varepsilon'\bigr)\bigr)^m | D_{0}= (\ln
\ell)^m \bigr)- P \bigl( \tau^*\leq\ell^2 |
D_{0}= (\ln\ell)^m \bigr).\nonumber
\end{eqnarray}
To control the last term,
we use the inequality of Lemma~\ref{premhit} with $n=\ell^2$
and $b=\ln\ell$, and
Lemma~\ref{exco},
%
\renewcommand{\theequation}{$\flat$}
\begin{equation}\label{flat}
P \bigl( \tau^*\leq\ell^2 | D_{0}=(\ln l)^m
\bigr) \leq\ell^2 m \frac{{\mathcal B}(0)}{{\mathcal B}(\ln\ell)} \leq
\ell^2 m
\biggl(\frac{2\ln\ell}{\ell} \biggr)^{\ln\ell}.
\end{equation}
For the other term, we use
the monotonicity of the process $(D_n)_{n\geq0}$,
the fact that it has positive correlations (by Proposition~\ref{lawD}),
and
Proposition~\ref{secdesc}
to get
\renewcommand{\theequation}{$\sharp$}
\begin{eqnarray}\label{sharp}
&& P \bigl( D_{\ell^2}\geq\bigl(\ell_\kappa\bigl(1-
\varepsilon'\bigr)\bigr)^m 
|D_{0}= (\ln\ell)^m \bigr)\nonumber
\\
&&\qquad \geq\prod_{1\leq i\leq m} P \bigl( D_{\ell^2}(i)\geq
\ell_\kappa\bigl(1-\varepsilon'\bigr) | D_{0}=
(0)^m \bigr)
\\
&&\qquad
= P \bigl( Y_{\ell^{2}}\geq\ell_\kappa
\bigl(1-\varepsilon'\bigr) | Y_0=0 \bigr)^m
\geq\bigl(1- \exp\bigl(-c\bigl(\varepsilon'\bigr)\ell\bigr)
\bigr)^m.\nonumber
\end{eqnarray}
Plugging the inequalities~(\ref{flat}) and (\ref{sharp})
into the inequality~(\ref{natural}),
we obtain
\renewcommand{\theequation}{$\clubsuit$}
\begin{eqnarray}\label{clubsuit}
&&P \bigl( \tau^*>\ell^2, D_{\ell^2}\geq\bigl(
\ell_\kappa\bigl(1-\varepsilon'\bigr)\bigr)^m
| D_{0}= (\ln\ell)^m \bigr)
\nonumber\\[-8pt]\\[-8pt]
&&\qquad \geq
\bigl( 1- \exp\bigl(-c\bigl(\varepsilon'\bigr)\ell\bigr)
\bigr)^m - \ell^2 m \biggl(\frac{2\ln\ell}{\ell}
\biggr)^{\ln\ell}.\nonumber
\end{eqnarray}
Using Lemma~\ref{premhit} with
$n={\kappa^{\ell(1-\varepsilon)}}$
and
$b=\ell_\kappa(1-\varepsilon')$, and a standard large deviation estimates,
we see that,
for $\varepsilon'$ small enough,
there exists $c(\varepsilon)>0$ such that,
for $\ell$ large enough,
\renewcommand{\theequation}{$\spadesuit$}
\begin{equation}\label{spadesuit}
P \bigl( \tau^*\leq{\kappa^{\ell(1-\varepsilon)}} | D_{0}= \bigl(
\ell_\kappa\bigl(1-\varepsilon'\bigr)\bigr)^m
\bigr) \leq\frac{
{\kappa^{\ell(1-\varepsilon)}}m
{\mathcal B}(0)}{{\mathcal B}(
\ell_\kappa(1-\varepsilon')
)} \leq m e^{-c(\varepsilon)\ell}.
\end{equation}
%
Plugging the estimates~(\ref{clubsuit}) and~(\ref{spadesuit})
into the inequality~(\ref{heartsuit}),
we conclude that, for~$\ell$ large enough,
\begin{eqnarray*}
&& P \bigl(\tau^*> {\kappa^{\ell(1-\varepsilon)}} | D_0=(1)^m
\bigr)
\\
&&\qquad \geq\biggl( 1-\exp\biggl(-\frac{1}{2}(\ln\ell)^2
\biggr) \biggr)^m \biggl(1-\frac{q}{\kappa-1} \biggr)^{
m\ell^{3/4}}
\\
&&\quad\qquad {}\times
\bigl(1-m\exp\bigl(-c(\varepsilon)\ell\bigr) \bigr) \biggl( \bigl( 1-\exp\bigl(-c\bigl(\varepsilon'\bigr)\ell\bigr) \bigr)^m
- \ell^2 m \biggl(\frac{2\ln\ell}{\ell} \biggr)^{\ln\ell} \biggr).
\end{eqnarray*}
Moreover, by Markov's inequality,
\[
E \bigl(\tau^* | D_0=(1)^m \bigr) \geq{
\kappa^{\ell(1-\varepsilon)}} P \bigl(\tau^*\geq{\kappa^{\ell
(1-\varepsilon)}} |
D_0=(1)^m \bigr).
\]
It follows that
\[
\mathop{\liminf_{\ell,m\to\infty,
q\to0
}}_{{\ell q} \to a,
({m}/{\ell})\to\alpha
} \frac{1}{\ell}\ln
E \bigl(\tau^* | D_0=(1)^m \bigr) \geq(1-\varepsilon)\ln
\kappa.
\]
Letting $\varepsilon$ go to $0$ yields the desired lower bound.
\end{pf}

%
\section{Synthesis}\label{secsyn}
As in Theorem~\ref{mainth},
we suppose that
$\ell\to+\infty$, $m\to+\infty$, $q\to0$,
in such a way that
${\ell q} \to a\in\,]0,+\infty[$,
${m}/{\ell}\to\alpha\in[0,+\infty]$.
We put now together the estimates of
Sections~\ref{bide} and~\ref{disc} in order to
evaluate the formula for the invariant measure
obtained at the end of
Section~\ref{bounds}.
Using the monotonicity of
$(Z_n^\theta)_{n\geq0}$, we have
\begin{eqnarray*}
E \bigl({\tau_0} | Z^\theta_0= 1 \bigr) &\leq&
\sum_{i=1}^m E \bigl({\tau_0}
| Z^\theta_0= i \bigr) P \bigl( O^\theta_{\tau^*}(0)=i
| O^\theta_0=o^\theta_{\mathrm{exit}} \bigr)
\\
&\leq&
E \bigl({\tau_0} | Z^\theta_0= m \bigr).
\end{eqnarray*}
These inequalities and
Proposition~\ref{exta} imply that
\[
\mathop{\lim_{\ell,m\to\infty}}_{q\to0,
{\ell q} \to a} \frac{1}{m}\ln\sum
_{i=1}^m E \bigl({\tau_0} |
Z^\theta_0= i \bigr) P \bigl( O^\theta_{\tau^*}(0)=i
| O^\theta_0=o^\theta_{\mathrm{exit}} \bigr) = V
\bigl(\rho^*(a),0\bigr).
\]
By Proposition~\ref{bdd}, for $\alpha\in[0,+\infty[$,
\[
\mathop{\lim_{\ell,m\to\infty,
q\to0
}}_{{\ell q} \to a,
({m}/{\ell})\to\alpha
} \frac{1}{\ell}\ln E
\bigl(\tau^* | O^\theta_0=o^\theta_{\mathrm{exit}}
\bigr) = \ln\kappa.
\]
For the case $\alpha=+\infty$,
by Corollary~\ref{majext} and
Proposition~\ref{visitz},
\[
\mathop{\limsup_{\ell,m\to\infty,
q\to0
}}_{{\ell q} \to a,
({m}/{\ell})\to\infty
} \frac{1}{\ell}\ln
E \bigl(\tau^* | O^\theta_0=o^\theta_{\mathrm{exit}}
\bigr) 
\leq\ln\kappa.
\]
These estimates allow us to evaluate the
ratio between the discovery time and the persistence time.
We define a function
$\psi\dvtx ]0,+\infty[\,\to
[0,+\infty[$ by setting
%
\[
\forall a \in\,]0,+\infty[\qquad\psi(a) = V\bigl(\rho^*(a),0\bigr). %
\]

For $\alpha\in[0,+\infty[$ or $\alpha=+\infty$,
we have
%
%
\begin{eqnarray*}
&& \mathop{\lim_{\ell,m\to\infty,
q\to0
}}_{{\ell q} \to a,
({m}/{\ell})\to\alpha
} \frac{{\sum_{i=1}^m
E ({\tau_0}
|
Z^\theta_0= i )
P (
O^\theta_{\tau^*}(0)=i
|
O^\theta_0=o^\theta_{\mathrm{exit}}
)
}
}{
E (\tau^* |
O^\theta_0=o^\theta_{\mathrm{exit}}
)
}
\\
&&\qquad = \cases{ 0, &\quad if $\alpha\psi(a)<\ln\kappa$,
\cr
+\infty, &\quad if $\alpha
\psi(a)>\ln\kappa$.}
\end{eqnarray*}
By Proposition~\ref{exfa}, we have
\[
\mathop{\lim_{\ell,m\to\infty}}_{q\to0,
{\ell q} \to a} \frac{\sum_{i=1}^m
E (\sum_{n=0}^{\tau_0}
f ({Z^\theta_n}/{m} ) |
Z^\theta_0= i )
P (
O^\theta_{\tau^*}(0)=i
|
O^\theta_0=o^\theta_{\mathrm{exit}}
)
}{
\sum_{i=1}^m
E ({\tau_0}
|
Z^\theta_0= i )
P (
O^\theta_{\tau^*}(0)=i
|
O^\theta_0=o^\theta_{\mathrm{exit}}
)
} = f
\bigl(\rho^*\bigr). %
\]
Putting together the bounds on $\nu$ given in Section~\ref{bounds} and
the previous considerations, we conclude that
\[
\mathop{\lim_{\ell,m\to\infty,
q\to0
}}_{{\ell q} \to a,
({m}/{\ell})\to\alpha
} \int_{[0,1]}
f\, d\nu= \cases{ 0, &\quad if $\alpha\psi(a)<\ln\kappa$,
\cr
f \bigl(\rho^*(a)
\bigr), &\quad if $\alpha\psi(a)>\ln\kappa$.} %
\]
This is valid for any
continuous nondecreasing function
$f\dvtx [0,1]\to{\mathbb R}$
such that $f(0)=0$.

\section*{Acknowledgment}
I thank an anonymous referee for his careful reading and his remarks,
which helped to improve the presentation.




%

\printaddresses

\begin{thebibliography}{25}

\bibitem{AF}
%
\begin{barticle}[author]
\bauthor{\bsnm{Alves},~\bfnm{Domingos}\binits{D.}} \AND
\bauthor{\bsnm{Fontanari},~\bfnm{Jose~Fernando}\binits{J.~F.}}
(\byear{1998}).
\btitle{Error threshold in finite populations}.
\bjournal{Phys. Rev. E}
\bvolume{57}
\bpages{7008--7013}.
\end{barticle}
%
\bptok{imsref}%
\endbibitem

\bibitem{CGA}
%
\begin{bmisc}[author]
\bauthor{\bsnm{Cerf},~\bfnm{Rapha{\"e}l}\binits{R.}}
(\byear{2010}).
\bhowpublished{Critical control of a genetic algorithm.
Preprint. Available at \arxivurl{arXiv:1005.3390}.}
\end{bmisc}
%
\bptok{imsref}%
\endbibitem

\bibitem{CE}
\begin{bbook}[author]
\bauthor{\bsnm{Cerf},~\bfnm{Rapha{\"e}l}\binits{R.}}
(\byear{2015}).
\btitle{Critical Population and Error Threshold on the Sharp Peak Landscape for a {M}oran Model}.
\bseries{Mem. Amer. Math. Soc.}
\bvolume{233}.
\bpublisher{Amer. Math. Soc.},
\blocation{Providence, RI}.
\end{bbook}
\bptok{imsref}%
\endbibitem

\bibitem{TD}
%
\begin{bmisc}[author]
\bauthor{\bsnm{Darden},~\bfnm{Thomas}\binits{T.}}
(\byear{1983}).
\bhowpublished{Asymptotics of fixation under heterosis: A large
deviation approach.
Unpublished manuscript}.
\end{bmisc}
%
\bptok{imsref}%
\endbibitem

\bibitem{Deme}
%
\begin{barticle}[mr]
\bauthor{\bsnm{Demetrius},~\bfnm{Lloyd}\binits{L.}},
\bauthor{\bsnm{Schuster},~\bfnm{Peter}\binits{P.}} \AND
\bauthor{\bsnm{Sigmund},~\bfnm{Karl}\binits{K.}}
(\byear{1985}).
\btitle{Polynucleotide evolution and branching processes}.
\bjournal{Bull. Math. Biol.}
\bvolume{47}
\bpages{239--262}.
\bid{doi={10.1016/S0092-8240(85)90051-5}, issn={0092-8240}, mr={0803564}}
\end{barticle}
%
\bptok{imsref}%
\endbibitem

\bibitem{DSV}
%
\begin{barticle}[mr]
\bauthor{\bsnm{Dixit},~\bfnm{Narendra~M.}\binits{N.~M.}},
\bauthor{\bsnm{Srivastava},~\bfnm{Piyush}\binits{P.}} \AND
\bauthor{\bsnm{Vishnoi},~\bfnm{Nisheeth~K.}\binits{N.~K.}}
(\byear{2012}).
\btitle{A finite population model of molecular evolution: Theory and
computation}.
\bjournal{J. Comput. Biol.}
\bvolume{19}
\bpages{1176--1202}.
\bid{doi={10.1089/cmb.2012.0064}, issn={1066-5277}, mr={2990752}}
\end{barticle}
%
\bptok{imsref}%
\endbibitem

\bibitem{EI1}
%
\begin{barticle}[author]
\bauthor{\bsnm{Eigen},~\bfnm{Manfred}\binits{M.}}
(\byear{1971}).
\btitle{Self-organization of matter and the evolution of biological
macromolecules}.
\bjournal{Naturwissenschaften}
\bvolume{58}
\bpages{465--523}.
\end{barticle}
%
\bptok{imsref}%
\endbibitem

\bibitem{EL}
%
\begin{bbook}[mr]
\bauthor{\bsnm{Ellis},~\bfnm{Richard~S.}\binits{R.~S.}}
(\byear{2006}).
\btitle{Entropy, Large Deviations, and Statistical Mechanics}.
\bpublisher{Springer},
\blocation{Berlin}.
\bnote{Reprint of the 1985 original}.
\bid{mr={2189669}}
\end{bbook}
%
\bptok{imsref}%
\endbibitem

\bibitem{FE}
%
\begin{bbook}[mr]
\bauthor{\bsnm{Feller},~\bfnm{William}\binits{W.}}
(\byear{1968}).
\btitle{An Introduction to Probability Theory and Its Applications.
{V}ol. {I}},
\bedition{3rd} ed.
\bpublisher{Wiley},
\blocation{New York}.
\bid{mr={0228020}}
\end{bbook}
%
\bptok{imsref}%
\endbibitem

\bibitem{FW}
%
\begin{bbook}[mr]
\bauthor{\bsnm{Freidlin},~\bfnm{M.~I.}\binits{M.~I.}} \AND
\bauthor{\bsnm{Wentzell},~\bfnm{A.~D.}\binits{A.~D.}}
(\byear{1998}).
\btitle{Random Perturbations of Dynamical Systems},
\bedition{2nd} ed.
\bseries{Grundlehren der Mathematischen Wissenschaften [Fundamental
Principles of Mathematical Sciences]}
\bvolume{260}.
\bpublisher{Springer},
\blocation{New York}.
\bnote{Translated from the 1979 Russian original by Joseph Sz{\"u}cs}.
\bid{doi={10.1007/978-1-4612-0611-8}, mr={1652127}}
\end{bbook}
%
\bptok{imsref}%
\endbibitem

\bibitem{GI}
%
\begin{barticle}[mr]
\bauthor{\bsnm{Gillespie},~\bfnm{Daniel~T.}\binits{D.~T.}}
(\byear{1976}).
\btitle{A general method for numerically simulating the stochastic
time evolution of coupled chemical reactions}.
\bjournal{J. Comput. Phys.}
\bvolume{22}
\bpages{403--434}.
\bid{issn={0021-9991}, mr={0503370}}
\end{barticle}
%
\bptok{imsref}%
\endbibitem

\bibitem{GRI}
%
\begin{bbook}[mr]
\bauthor{\bsnm{Grimmett},~\bfnm{Geoffrey}\binits{G.}}
(\byear{1999}).
\btitle{Percolation},
\bedition{2nd} ed.
\bseries{Grundlehren der Mathematischen Wissenschaften [Fundamental
Principles of Mathematical Sciences]}
\bvolume{321}.
\bpublisher{Springer},
\blocation{Berlin}.
\bid{doi={10.1007/978-3-662-03981-6}, mr={1707339}}
\end{bbook}
%
\bptok{imsref}%
\endbibitem

\bibitem{Har}
%
\begin{barticle}[mr]
\bauthor{\bsnm{Harris},~\bfnm{T.~E.}\binits{T.~E.}}
(\byear{1977}).
\btitle{A correlation inequality for {M}arkov processes in partially
ordered state spaces}.
\bjournal{Ann. Probab.}
\bvolume{5}
\bpages{451--454}.
\bid{mr={0433650}}
\end{barticle}
%
\bptok{imsref}%
\endbibitem

\bibitem{KI}
%
\begin{bbook}[mr]
\bauthor{\bsnm{Kifer},~\bfnm{Yuri}\binits{Y.}}
(\byear{1988}).
\btitle{Random Perturbations of Dynamical Systems}.
\bseries{Progress in Probability and Statistics}
\bvolume{16}.
\bpublisher{Birkh\"auser},
\blocation{Boston, MA}.
\bid{doi={10.1007/978-1-4615-8181-9}, mr={1015933}}
\end{bbook}
%
\bptok{imsref}%
\endbibitem

\bibitem{KID}
%
\begin{barticle}[mr]
\bauthor{\bsnm{Kifer},~\bfnm{Yuri}\binits{Y.}}
(\byear{1990}).
\btitle{A discrete-time version of the {W}entzell--{F}reidlin theory}.
\bjournal{Ann. Probab.}
\bvolume{18}
\bpages{1676--1692}.
\bid{issn={0091-1798}, mr={1071818}}
\end{barticle}
%
\bptok{imsref}%
\endbibitem

\bibitem{LIG}
%
\begin{bbook}[mr]
\bauthor{\bsnm{Liggett},~\bfnm{Thomas~M.}\binits{T.~M.}}
(\byear{2005}).
\btitle{Interacting Particle Systems}.
\bpublisher{Springer},
\blocation{Berlin}.
\bnote{Reprint of the 1985 original}.
\bid{mr={2108619}}
\end{bbook}
%
\bptok{imsref}%
\endbibitem

\bibitem{Cas1}
%
\begin{barticle}[author]
\bauthor{\bsnm{McCaskill},~\bfnm{John}\binits{J.}}
(\byear{1984}).
\btitle{A stochastic theory of macromolecular evolution}.
\bjournal{Biol. Cybernet.}
\bvolume{50}
\bpages{63--73}.
\end{barticle}
%
\bptok{imsref}%
\endbibitem

\bibitem{MI}
%
\begin{barticle}[mr]
\bauthor{\bsnm{Mezi{\'c}},~\bfnm{Igor}\binits{I.}}
(\byear{1997}).
\btitle{FKG inequalities in cellular automata and coupled map lattices}.
\bjournal{Phys. D}
\bvolume{103}
\bpages{491--504}.
\bid{doi={10.1016/S0167-2789(96)00281-3}, issn={0167-2789}, mr={1464259}}
\end{barticle}
%
\bptok{imsref}%
\endbibitem

\bibitem{MS}
%
\begin{barticle}[mr]
\bauthor{\bsnm{Morrow},~\bfnm{Gregory~J.}\binits{G.~J.}} \AND
\bauthor{\bsnm{Sawyer},~\bfnm{Stanley}\binits{S.}}
(\byear{1989}).
\btitle{Large deviation results for a class of {M}arkov chains arising
from population genetics}.
\bjournal{Ann. Probab.}
\bvolume{17}
\bpages{1124--1146}.
\bid{issn={0091-1798}, mr={1009448}}
\end{barticle}
%
\bptok{imsref}%
\endbibitem

\bibitem{MUS}
%
\begin{barticle}[mr]
\bauthor{\bsnm{Musso},~\bfnm{Fabio}\binits{F.}}
(\byear{2011}).
\btitle{A stochastic version of the {E}igen model}.
\bjournal{Bull. Math. Biol.}
\bvolume{73}
\bpages{151--180}.
\bid{doi={10.1007/s11538-010-9525-4}, issn={0092-8240}, mr={2770281}}
\end{barticle}
%
\bptok{imsref}%
\endbibitem

\bibitem{NS}
%
\begin{barticle}[author]
\bauthor{\bsnm{Nowak},~\bfnm{Martin~A.}\binits{M.~A.}} \AND
\bauthor{\bsnm{Schuster},~\bfnm{Peter}\binits{P.}}
(\byear{1989}).
\btitle{Error thresholds of replication in finite populations.
{M}utation frequencies and the onset of {M}uller's ratchet.}
\bjournal{J. Theoret. Biol.}
\bvolume{137}
\bpages{375--395}.
\end{barticle}
%
\bptok{imsref}%
\endbibitem

\bibitem{OCH}
%
\begin{bmisc}[author]
\bauthor{\bsnm{Ochoa},~\bfnm{Gabriela}\binits{G.}}
(\byear{2001}).
\bhowpublished{Error thresholds and optimal mutation rates in genetic
algorithms.
Ph.D. thesis, Univ. Sussex, Brighton.}
\end{bmisc}
%
\bptok{imsref}%
\endbibitem

\bibitem{PEM}
%
\begin{barticle}[author]
\bauthor{\bsnm{Park},~\bfnm{Jeong-Man}\binits{J.-M.}},
\bauthor{\bsnm{Mu{\~n}oz},~\bfnm{Enrique}\binits{E.}} \AND
\bauthor{\bsnm{Deem},~\bfnm{Michael~W.}\binits{M.~W.}}
(\byear{2010}).
\btitle{Quasispecies theory for finite populations}.
\bjournal{Phys. Rev. E}
\bvolume{81}
\bpages{011902}.
\end{barticle}
%
\bptok{imsref}%
\endbibitem

\bibitem{SAA1}
%
\begin{barticle}[author]
\bauthor{\bsnm{Saakian},~\bfnm{David~B.}\binits{D.~B.}},
\bauthor{\bsnm{Deem},~\bfnm{Michael~W.}\binits{M.~W.}} \AND
\bauthor{\bsnm{Hu},~\bfnm{Chin-Kun}\binits{C.-K.}}
(\byear{2012}).
\btitle{Finite population size effects in quasispecies models with
single-peak fitness landscape}.
\bjournal{Europhys. Lett.}
\bvolume{98}
\bpages{18001}.
\end{barticle}
%
\bptok{imsref}%
\endbibitem

\bibitem{WE}
%
\begin{bmisc}[author]
\bauthor{\bsnm{Weinberger},~\bfnm{Edward~D.}\binits{E.~D.}}
(\byear{1987}).
\bhowpublished{A~stochastic generalization of Eigen's theory of
natural selection.
Ph.D. dissertation, The Courant Institute of Mathematical Sciences,
New York Univ., New York.}
\end{bmisc}
%
\bptok{imsref}%
\endbibitem

\end{thebibliography}
\end{document}